\documentclass[11pt]{article}

\input{mystyle.sty}

\title{Discrete-Time Mean Field Type Games: Probabilistic Setup}
\author{Gr\'egoire Lambrecht\footnote{Center for Data Science, New York University and NYU Shanghai, gl3048@nyu.edu}\,\,\footnote{Partially supported by NSF Award 1922658} \quad Mathieu Laurière\footnote{Shanghai Center for Data Science; NYU-ECNU Institute of Mathematical Sciences at NYU Shanghai; NYU Shanghai, Shanghai,
People’s Republic of China, mathieu.lauriere@nyu.edu}}

\date{}

\hypersetup{
    colorlinks=true,
    linkcolor=blue, %
    citecolor=blue,
    urlcolor=blue,
    hyperfootnotes=blue,
    bookmarks=true
}

\begin{document}

\maketitle

\begin{abstract}
We introduce a general probabilistic framework for discrete-time, infinite-horizon discounted Mean Field Type Games (MFTGs) with both global common noise and team-specific common noises. In our model, agents are allowed to use randomized actions, both at the individual level and at the team level. We formalize the concept of Mean Field Markov Games (MFMGs) and establish a connection between closed-loop policies in MFTGs and Markov policies in MFMGs through different layers of randomization. By leveraging recent results on infinite-horizon discounted games with infinite compact state-action spaces, we prove the existence of an optimal closed-loop policy for the original MFTG when the state spaces are at most countable and the action spaces are general Polish spaces. We also present an example satisfying our assumptions, called Mean Field Drift of Intentions, where the dynamics are strongly randomized, and we establish the existence of a Nash equilibrium using our theoretical results.
\end{abstract}

\textbf{Key words.} Mean Field Type Games, Mean Field Markov Games, McKean-Vlasov Control. 

\textbf{AMS subject classification.} 91A13, 91A15, 93E20.

\section{Introduction}

Global connectivity gives rise to complex systems across physical and virtual domains~\cite{castells_1996_network}, such as trade networks~\cite{Rodrigue_2017_maritime, maluck_2015_network_network}, air traffic systems~\cite{xu_2017_global_flight}, and social networks \cite{zhang_2012_twitter,achitouv_2024_twitter}. These settings typically involve a large population of interacting agents, making game theory an appropriate modeling framework.
A central concept in game theory is the notion of a Nash equilibrium, introduced by Nash~\cite{nash_1950_equilibrium}, which corresponds to a configuration in which no agent can benefit from a unilateral change in behavior. While the existence of such equilibria is well understood in many settings, solving games numerically becomes infeasible as the number of players increases due to the exponential growth in the number of pairwise interactions. To overcome this challenge, Lasry and Lions~\cite{lasry_2007_mfg} and Huang, Malham{\'e}, and Caines \cite{huangmalhamecaines_2006_mkv} introduced the framework of Mean Field Games (MFGs), drawing inspiration from statistical physics.

MFGs are a powerful framework used to describe the asymptotic behavior of games with homogeneous agents (i.e., agents sharing the same cost function and transition dynamics) as the number of players tends to infinity. The main idea is to reduce the problem to the interaction between a representative agent and the distribution of the population. While MFGs correspond to noncooperative games and focus on Nash equilibria, the cooperative counterpart has been studied under the name Mean Field Control (MFC). MFC is frequently referred to as McKean–Vlasov (MKV) control, since it consists of the joint control of an agent and its law. Both MFGs and MFC have been extensively studied~\cite{huangmalhamecaines_2006_mkv,CarmonaDelarue_book_I,CarmonaDelarue_book_II,CarmonaLauriereTan-2019-mfqrl, carmona_2015_fbsde_mkv,MR3134900, graber_mftc_mfg_2016}. 

One of the main limitations of MFGs and MFC lies in their central assumption: agents are homogeneous. However, many real-world situations involve large groups of homogeneous agents forming distinct teams with distinct characteristics. Within a team, all agents are indistinguishable, but the state and action spaces, cost functions, and transition dynamics may differ between teams. Examples include economic competition among groups of companies from different countries and strategic decisions in military conflicts. In situations where agents of the same team cooperate, MFGs have been extended to Mean Field Type Games (MFTGs)~\cite{tembine_2017_mftg,djehiche_2017_mftg}.

MFTGs can be interpreted as a model for very large teams coordinated by central planners. These planners help agents within their team minimize a social cost by first sampling a random policy at the team level, after which each agent samples their own action conditionally on this common team policy. A Nash equilibrium in an MFTG corresponds to an equilibrium between teams, where no team has an incentive to deviate from their policy given the behaviors of other teams. This framework has found applications in modeling crowd motion~\cite{djehiche2017mean}, blockchain interactions~\cite{barreiro2019blockchain}, and electricity prices~\cite{djehiche2020price}. It has recently received growing attention in the mathematics and machine learning literature \cite{cosso_2019_zero-sum,carmona2020policy,subramanian_2023_mftg,guan_2024_zero-sum,sanjari_2024_exchangeable,zaman2024robust}. While algorithms for finding optimal policies have been developed in specific settings~\cite{shao_2024_mftg}, most existing results do not provide general existence theorems for Nash equilibria in the broader MFTG context.

In this work, we study infinite cooperative teams modeled by an MFTG in discrete time with an infinite-horizon discounted objective. Interactions between agents are modeled through the joint law of families of state–action pairs. The dynamics and actions are subject to both idiosyncratic and common noises. In continuous time, the influence of different distributions for idiosyncratic noises on the behavior of MFTGs has been investigated in \cite{ducan_2025_mixture_noises,tembine_2025_rosenblatt}. In contrast, in our work, we do not specify the distribution of the noises and instead propose a general multilevel noise model. To the best of our knowledge, this is the first time that common noise is incorporated into the MFTG framework. The common noise is structured in two levels: a global common noise affecting all teams, and team-specific common noises affecting only the agents within a given team. Certain specific forms of dynamics and cost functions in discrete-time MFTGs have been studied in the literature \cite{barreiro_2025_mftg_high_order_cost}. However, our goal is to propose a very general framework for both the system dynamics and the discounted value function.

A related setting was studied in \cite{sanjari_2024_mftg}, where different teams, each driven by decision makers, are in competition. Within each team, the decision makers are cooperative but do not share the same information. A key difference between their model and ours is that our framework includes level-0 agents who follow the central planner of their team. In contrast, \cite{sanjari_2024_mftg} considers teams with multiple decision makers and analyzes the mean-field behavior as the number of decision makers tends to infinity.

For MFC in discrete time with an infinite-horizon discounted objective, \cite{CarmonaLauriereTan-2019-mfqrl} introduced the notion of Mean Field Markov Decision Processes (MFMDPs). They showed that common randomization plays a key role in linking closed-loop policies in MFC to MFMDPs. This reformulation enabled them to leverage existing results from the theory of Markov Decision Processes (MDPs) \cite{bertsekas_1878_book_stocha}. In our study, we extend this reasoning to the MFTG context and use common randomization to link closed-loop policies in the MFTG to a Mean Field Markov Game (MFMG). We refer to this reformulation as the lifted MFMG, since the aggregated state of the players is lifted to the space of probability measures, corresponding to the joint law of the family of states of the agents across the teams. This perspective enables us to apply results from the theory of Markov games with state-dependent action sets, where players' available actions may depend on their current state.

However, even though Markov game theory with state-dependent action sets has been extensively studied and understood in many situations, especially when the state and action spaces are finite \cite{altman_1998_constrained_markov_game,altman_1999_book_constrained_mdp}, we did not find results with simple assumptions in the general setting of compact state-action spaces. Even for standard Markov games, the known assumptions ensuring the existence of Nash equilibria remain very restrictive in the compact case \cite{saldi_2024_epsilon_ne}.

Our contributions are as follows:
\begin{enumerate}
    \item We introduce in Section~\ref{section:MFTG_model} a \textbf{general probabilistic framework} for discrete-time, infinite-horizon discounted MFTGs with common noises and interactions through the state-action distribution.
    \item We define the notion of a \textbf{lifted MFMG} in Section~\ref{section:MFMG} and analyze its connection to the original MFTG problem when the state spaces are at most countable and the action spaces are general Polish spaces. We prove the \textbf{equivalence} between solving the MFMG and solving the MFTG (Theorem~\ref{corollary:mftg_mfmg_ne_equivalence}).
    \item We prove in Section~\ref{section:main_result} the \textbf{existence of a Nash equilibrium} in the lifted MFMG when the state spaces of the MFTG are at most countable, by relying on a result of Dufour and Prieto-Rumeau for Markov games~\cite{dufour_2024_ne_markov_game}. This consequently establishes the existence of an equilibrium for the MFTG itself.
    \item We illustrate our setting with an \textbf{example} in Section~\ref{section:mean_field_drift_of_intentions}, referred to as Mean Field Drift of Intentions, in which agents from different teams are subject to strong randomization.
\end{enumerate}

The paper is structured as follows.
Section~\ref{section:intuition} presents an informal description of a game that serves as a motivating example for the concept of MFTG.
Section~\ref{section:notation} introduces the notation, terminology, and preliminary results that will be used throughout the paper.
Section~\ref{section:MFTG_model} introduces a general probabilistic framework for discrete-time MFTGs.
Section~\ref{section:MFMG} defines the notion of MFMG.
Section~\ref{sec:relations-models} establishes the connection between MFMG and MFTG policies and value functions.
Section~\ref{section:main_result} applies the theory of Markov games~\cite{dufour_2024_ne_markov_game} to establish the existence of Nash equilibria for MFMGs, and consequently for MFTGs, when the state spaces are at most countable.
Section~\ref{section:mean_field_drift_of_intentions} illustrates our result on an example.

\section{Intuition: Finite-population \texorpdfstring{$m$}{m}-team Game}
\label{section:intuition}

In this section, we describe a finite-population game that serves as a motivating example for the concept of an MFTG. The game has $m$ \defi{teams}. Each team is composed of individuals called \defi{agents} who behave cooperatively within their respective team. All the agents of a team use a common policy chosen by a \defi{central player}. We will sometimes write \defi{player} instead of central player. We use the term ``team'' instead of ``coalition'' as in~\cite{shao_2024_mftg} to avoid confusion with other existing game-theoretic notions.

Let $N^i \in \NN^*$ be the number of individual agents in team $i \in [m]$. At time $n$, agent $j \in [N^i]$ in team $i$ has a state $X^{ij}_n \in \fX^i$ at the $n$-th time step. The central player determines a mixed strategy for the agents in their respective team over some Polish space $\fA^i$. The central player is authorized to choose the strategy randomly. Agent $j$ then selects an action $\alpha^{ji}_n$ according to the strategy assigned to team $i$. For each time step $n$, let us define $\bar a^{\underline{N}}_n$, the empirical joint probability measure of the family of state-action tuples:
$$
    \bar a^{\underline{N}}_n = \frac{1}{N^1\times \cdots \times N^m}\sum_{(k^1,\dots, k^m) \in [N^1]\times\cdots\times[N^m]}^{}\delta_{(X^{1k^1}_n,\alpha^{1k^1}_n, \dots, X^{mk^m}_n,\alpha^{mk^m}_n)}, \quad \forall n\in \NN.
$$
The dynamics of the state of agent $j \in [N^i]$ in team $i \in [m]$ are given by:
\begin{equation}
\label{eq:N-agent-transition-ij}
    X^{ij}_{n+1}=F^i\Bigl(X^{ij}_n,\alpha^{ij}_n,\bar a^{\underline{N}}_n,\varepsilon^{ij}_{n+1}, \varepsilon^{0,i}_{n+1},\varepsilon^{0,0}_{n+1}\Bigr), \qquad \forall n \ge 0,
\end{equation}
where $F^i$ is a system function and $\varepsilon^{i1}_n,\dots, \varepsilon^{iN^i}_n$ are independent and identically distributed (i.i.d.) random shocks taking values in some Polish space $E^i$. The sequence $(\varepsilon^{0,0}_n)_{n\ge 1}$ represents the global common noise, affecting every agent in every team. For each $i \in [m]$, $(\varepsilon^{0,i}_n)_{n\ge 1}$ represents the common noise of team $i$, affecting only the agents within that team. Moreover, for the transition~\eqref{eq:N-agent-transition-ij}, the agent incurs a cost $f^i\bigl(X^{ij}_n,\alpha^{ij}_n,\bar a^{\underline{N}}_n \bigr)$, where $f^i$ is a one-step cost function.

The goal of the central player of team $i$ is to find a strategy process $\balpha^i$ to minimize the overall average cost of their team, given by:
$$
    \frac{1}{N^i} \sum_{j=1}^{N^i} \EE\Bigl[\sum_{n=0}^\infty \gamma^n f^i\bigl(X^{ij}_n,\alpha^{ij}_n,\bar a^{\underline{N}}_n \bigr)\Bigr],
$$
where $\gamma \in [0,1)$ is a discount factor. Since the central players interact, one can look for a Nash equilibrium, a situation in which no central player has any incentive to unilaterally change the behavior of their team.

In the present work, we are interested in the asymptotic regime where $N^i\to\infty$ for each $i \in [m]$. Driven by standard propagation-of-chaos results in MFGs, we expect that the states of individual agents become independent in the limit, and that for each $i \in [m]$, every agent of team $i$ evolves according to the following dynamics, written for one representative agent:
$$
    X^{i}_{n+1}=F^i\Bigl(X^{i}_n,\alpha^{i}_n,\PP_{(X^1_n,\alpha^1_n, \dots, X^m_n,\alpha^m_n)}^0, \varepsilon^{i}_{n+1}, \varepsilon^{i,0}_{n+1}, \varepsilon^{0,0}_{n+1}\Bigr) ,\quad \forall n\ge 0,
$$
where $\PP^0_{(X^1_n,\alpha^1_n, \dots , X^m_n,\alpha^m_n)}$ is the conditional law of the family of state-action tuples $(X^1_n,\alpha^1_n, \dots , X^m_n,\alpha^m_n)$ given the global common noise $(\varepsilon^{0,0}_n)_{n\ge 1}$, the $i$-th team common noise $(\varepsilon^{0,i}_n)_{n\ge 1}$, and any extra sources of randomness relative to the mixed strategies chosen by the central players. In this limiting scenario, the minimization problem of the central player $i$ is expected to become:
\begin{equation*}
    \inf_{\balpha^i=(\alpha^i_n)_{n\ge 0}} J^i(\balpha^i, \balpha^{-i})
     =
     \inf_{\balpha^i=(\alpha^i_n)_{n\ge 0}} \EE\Bigl[\sum_{n=0}^\infty \gamma^n f^i\bigl(X^i_n,\alpha^i_n,\PP^0_{(X^1_n,\alpha^1_n, \dots, X^m_n,\alpha^m_n)}\bigr)\Bigr],
\end{equation*}
where $\balpha^{-i} = (\balpha^1, \dots, \balpha^{i-1}, \balpha^{i+1}, \dots, \balpha^m)$ represents the actions of the representative agents excluding the $i$-th one. From this formulation of the costs, one can search for the existence of a \defi{Nash equilibrium}, that is, a profile of action processes $\ubalpha = (\balpha^1, \dots, \balpha^m)$ such that
$$
J^i(\balpha^i, \balpha^{-i}) \leq J^i(\balpha', \balpha^{-i}), \quad \forall i \in [m], \, \forall \balpha' \in (\fA^{i})^{\mathbb{N}}.
$$
The main goal of the present work is to study the existence of such Nash equilibria.

\section{Notation and Terminology}
\label{section:notation}

\subsection{Usual Spaces}

We denote by $\NN$ the set of natural numbers (including zero), and by $\RR$ the set of real numbers. We define the sets of non-negative and non-zero real numbers as $\RR_+ = \{x \in \RR \mid x \geq 0\}$ and $\RR^* = \RR \setminus \{0\}$, respectively. We combine these notations to obtain $\RR^*_+ = \RR_+ \setminus\{0\}$. Similarly, we denote by $\NN^* = \NN \setminus \{0\}$ the set of strictly positive natural numbers.
For any $N \in \NN^*$, we write $[N] = \{1, \dots, N\}$ to denote the set of the first $N$ positive integers. For any $N^1, N^2 \in \NN$ with $N^1 \leq N^2$, we write $\llbracket N^1, N^2 \rrbracket = \{N^1, N^1+1, \dots,  N^2\}$ to denote the set of integers between $N^1$ and $N^2$. For any set $S$, we denote its power set by $2^S$, that is, $2^S = \{\sS \mid \sS \subseteq S \}$.

\subsection{Game Notation}
Let $\ufX = \fX^1 \times \dots \times \fX^m$ denote the product of sets $\fX^1, \dots, \fX^m$. When the context is clear, we write a generic element $(x^1, \dots, x^m) \in \ufX$ as $\ux = (x^1, \dots, x^m)$. For any index $i$, we use the notation $x^{-i} = (x^1, \dots, x^{i-1}, x^{i+1}, \dots, x^m)$ to denote the tuple excluding the $i$-th component.

Let $\fX$ be a set. For a sequence $(x_n)_{n \in \NN} \in \fX^\NN$, we write $\bx = (x_n)_{n \in \NN}$ and correspondingly $\bm{\fX} = \fX^\NN$.
We will freely combine these notations as needed to define expressions such as $\fX^{-i}, \bm{\fX}, \bm{\fX^{-i}}, \bm{\ux}, \bm{x^{-i}}$. Their meaning will always be unambiguous from the context.

\subsection{Measurability}
\label{subsec:measurability}
Throughout the paper, we work with \defi{Borel spaces}, namely spaces homeomorphic to a non-empty Borel subset of some Polish space. If $\fX$ is such a space, we denote by $\cB_\fX$ its Borel $\sigma$-field and by $\cP(\fX)$ the space of probability measures on $(\fX, \cB_\fX )$, implicitly assumed to be equipped with the topology of weak convergence and its corresponding Borel $\sigma$-field $\cB_{\cP(\fX)}$. On the product of a finite number of metric spaces $\ufX = \fX^1 \times \dots \times \fX^N$, we consider the product topology, which makes the product again a metric space. If the metric spaces $\fX^i$ are separable, then we have $\cB_{\ufX} = \cB_{\fX^1} \otimes \dots \otimes \cB_{\fX^N}$.

On a Borel space $(\fX, \cB_\fX)$, we write $\delta_{x}$ for the Dirac probability measure at $x \in \fX$, defined on $(\fX, \cB_\fX )$ by $\delta_{x}(B) = \mathds{1}_{x \in B}$ for any $B \in \cB_\fX$.

We say that a set $\fX$ is \defi{at most countable} if it is either finite or countably infinite. For such sets, we equip $\fX$ with the discrete topology and consider the associated measurable space $(\fX, 2^\fX)$. In this setting, weak convergence of probability measures on $\cP(\fX)$ is equivalent to convergence in total variation.

Let $(\fX, \cB_\fX )$ and $(\fA, \cB_\fA)$ be two measurable spaces. A \defi{stochastic kernel} on $\fA$ given $\fX$ is a mapping $Q: \fX \times \cB_\fA \to \RR_+$ such that the mapping $Q_B:x \mapsto Q(B\mid x)$ is measurable on $(\fX, \cB_\fX )$ for every $B \in \cB_\fA$, and the mapping $Q_x:B \mapsto Q(B\mid x)$ is in $\cP(\fA)$ for every $x \in \fX$. Note that the mapping $x \mapsto Q(\cdot\mid x)$ can be seen as a random probability measure.

Let $Q$ be a stochastic kernel on $\fA$ given $\fX$. For a bounded measurable function $f:\fA \to \RR$, we denote by $Qf: \fX \to \RR$ the measurable function defined for every $x \in \fX$ by $Qf(x) = \int_{\fA}f(a)Q(da \mid x)$. For a probability measure $\mu \in \cP(\fX)$, we denote by $\mu \measprod Q$ the probability measure on $(\fA, \cB_\fA)$ given by $B \mapsto \int_\fX Q(B\mid x)\mu(dx)$.

Let $(C,\cF)$ and $(C',\cF')$ be two measurable spaces. The product of the $\sigma$-algebras $\cF$ and $\cF'$ is denoted by $\cF \otimes \cF'$ and consists of the $\sigma$-algebra generated by the measurable rectangles, that is, sets of the form $B \times B'$ for $B \in \cF$ and $B'\in \cF'$. For $\mu$ and $\mu'$, two probability measures on $\cF$ and $\cF'$, we denote by $\mu \otimes \mu'$ the unique probability measure on the product space $(C \times C', \cF \otimes \cF')$ satisfying $\mu\otimes\mu'(B\times B') = \mu(B) \times \mu'(B')$ for all $(B,B') \in (\cF, \cF').$ We give a few results that will be useful for what follows:

\begin{lemma}[Blackwell-Dubins Lemma {\normalfont\cite{blackwell_dubins_lemma_1983}}]
    \label{le:BlackwellDubins}
    For any Polish space $B$, there exists a measurable function $\rho_B:\cP(B)\times [0,1]\to B$, which we shall call the \defi{Blackwell-Dubins function} of the space $B$, satisfying:
    \textbf{(i)} for each $\nu\in\cP(B)$, if $U$ is a random variable uniformly distributed on $[0,1]$, then the $B$-valued random variable $\rho_B(\nu,U)$ has distribution $\nu$;
    \textbf{(ii)} for almost every $u\in[0,1]$, the function $\nu\mapsto \rho_B(\nu,u)$ is continuous for the weak topology of $\cP(B)$.
\end{lemma}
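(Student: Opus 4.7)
The strategy is to first construct $\rho_B$ in the canonical case $B=[0,1]$ via the generalized inverse CDF, and then transfer the construction to a general Polish space through a homeomorphic embedding into the Hilbert cube.

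First I would treat the model case $B=[0,1]$. Define
$$\rho_{[0,1]}(\nu,u):=\inf\{x\in[0,1]:F_\nu(x)\ge u\}, \qquad F_\nu(x):=\nu([0,x]).$$
Joint Borel measurability of $(\nu,u)\mapsto\rho_{[0,1]}(\nu,u)$ follows because $(\nu,x)\mapsto F_\nu(x)$ is Borel and the generalized inverse of a jointly measurable nondecreasing function is again jointly measurable. Property (i) is the classical quantile identity: if $U\sim\mathrm{Unif}[0,1]$, then $\rho_{[0,1]}(\nu,U)$ has law $\nu$.

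For the continuity clause (ii) on $[0,1]$, the key observation is that for each fixed $\nu_0$ the map $u\mapsto\rho_{[0,1]}(\nu_0,u)$ is nondecreasing, hence its set of discontinuities $D_{\nu_0}\subset[0,1]$ is at most countable; on its complement, $\nu\mapsto\rho_{[0,1]}(\nu,u)$ can be shown to be continuous at $\nu_0$ by squeezing between the left- and right-continuous inverses, using that $F_{\nu_n}(x)\to F_{\nu_0}(x)$ at every continuity point $x$ of $F_{\nu_0}$ whenever $\nu_n\to\nu_0$ weakly. To upgrade this pointwise-in-$\nu_0$ statement (continuity at $\nu_0$ for Lebesgue-a.e. $u$, depending on $\nu_0$) into a single null set of ``bad'' $u$'s independent of $\nu_0$, I would exploit the separability of $\cP([0,1])$ together with a Fubini-type argument on $\cP([0,1])\times[0,1]$.

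To pass from $[0,1]$ to a general Polish $B$, I would fix a homeomorphic embedding $\iota: B\hookrightarrow[0,1]^{\mathbb{N}}$ into the Hilbert cube, extend the one-dimensional construction coordinate-by-coordinate using regular conditional distributions to obtain a map $\rho_{[0,1]^{\mathbb{N}}}$ with the analogous properties, restrict it to probability measures supported on $\iota(B)$, and pull back through $\iota^{-1}$. Continuity of $\iota^{-1}$ on $\iota(B)$ together with continuity of $\nu\mapsto\iota_\ast\nu$ will transfer property (ii) from the Hilbert-cube case to $B$. The main obstacle I expect is clause (ii): measurability is formal and (i) is classical, but (ii) requires both choosing an embedding that preserves the weak topology (a generic Borel isomorphism between $B$ and $[0,1]$ would not suffice, since it destroys weak convergence on $\cP(B)$) and uniformizing ``for each $\nu$, a.e. $u$'' into ``for a.e. $u$, for every $\nu$''. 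These two steps, together with the coordinate-wise construction on the Hilbert cube, constitute the real work of the proof.
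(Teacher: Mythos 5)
The paper does not actually prove this lemma; it is quoted from Blackwell--Dubins with a citation, so your argument can only be assessed on its own terms. The quantile construction on $[0,1]$ and the plan to transfer to a general Polish space through a Hilbert-cube embedding are reasonable and close to standard treatments, and your proofs of (i), of joint measurability, and of ``for each fixed $\nu_0$, continuity at $\nu_0$ for a.e.\ $u$'' are correct.

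The gap is the uniformization step, and it is not merely difficult: it is impossible, because the strong form of (ii) you are aiming for (a single null set of bad $u$'s working for every $\nu$, which is indeed what the lemma literally says) is false. Take $B=\{0,1\}$, a Polish space, and identify $\cP(B)\cong[0,1]$ via $p\mapsto\nu_p:=(1-p)\delta_0+p\delta_1$. If for a.e.\ $u$ the map $p\mapsto\rho_B(\nu_p,u)$ were continuous, it would be constant (a continuous map from a connected space to a discrete one), and then $p=\mathrm{Leb}(\{u:\rho_B(\nu_p,u)=1\})$ could not depend on $p$ --- a contradiction. The same obstruction is already visible in your $[0,1]$ construction: the bad set $D_{\nu_0}$ sweeps out all of $(0,1)$ as $\nu_0$ varies (the measure $(1-u)\delta_0+u\delta_1$ has a quantile jump exactly at $u$), and a Fubini argument on $\cP([0,1])\times[0,1]$ can only yield ``for a.e.\ $u$, for $\eta$-a.e.\ $\nu$'', never ``for all $\nu$''. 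What Blackwell and Dubins actually prove --- and what your quantile function already delivers on $[0,1]$ with no uniformization at all --- is the pointwise version: for each $\nu$, for a.e.\ $u$ (null set depending on $\nu$), $\rho_B(\cdot,u)$ is continuous at $\nu$. You should target that statement and drop the uniformization step. A secondary caveat on the extension to general $B$: the coordinatewise disintegration kernels $\nu\mapsto\nu(\cdot\mid x_1,\dots,x_k)$ on the Hilbert cube are not weakly continuous in $\nu$, so even the pointwise continuity does not transfer through that construction for free; Blackwell--Dubins instead use nested partitions of $B$ whose boundaries are $\nu$-null.
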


\begin{lemma}[Disintegration Lemma {\normalfont\cite[Corollary 3.6]{kallenberg2002foundations}}]
    \label{le:Disintegration}
    Let $(S, \cB_{S})$, $(\fA^1, \cB_{\fA^1})$ and $(\fA^2, \cB_{\fA^2})$ be three Borel spaces. Consider a stochastic kernel $\rho$ on $\fA^1\times \fA^2$ given $S$, and a stochastic kernel $\nu$ on $\fA^1$ given $S$, such that $\nu(\cdot\mid s) = \rho(\cdot \times \fA^2 \mid s)$ for all $s \in S$. Then there exists a stochastic kernel $\mu$ on $\fA^2$ given $S \times \fA^1$ such that $\rho = \nu \measprod \mu$.
\end{lemma}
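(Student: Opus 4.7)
The plan is to reduce the claim to the classical disintegration theorem for a single probability measure on a Borel product space and then promote the construction to be jointly measurable in the parameter $s$. The strategy naturally splits into two parts: a pointwise construction for each fixed $s$, and a measurable selection argument in $s$.

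Since $\fA^1$ and $\fA^2$ are Borel, by the Kuratowski isomorphism theorem each is Borel isomorphic to a Borel subset of $[0,1]$, so I would assume without loss of generality that $\fA^2 \subseteq [0,1]$. For each fixed $s \in S$, the probability measure $\rho(\cdot\mid s) \in \cP(\fA^1 \times \fA^2)$ has first marginal $\nu(\cdot\mid s)$, and the classical regular conditional probability theorem on Borel product spaces yields a stochastic kernel $\mu_s$ on $\fA^2$ given $\fA^1$ satisfying $\rho(\cdot\mid s) = \nu(\cdot\mid s) \measprod \mu_s$. This settles existence pointwise in $s$; the substantive issue is to produce a version of $\mu_s(B^2\mid a^1)$ that is jointly measurable in $(s, a^1)$.

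For the joint measurability, I would construct $\mu$ explicitly via a countable determining family. For each $q \in \QQ \cap [0,1]$, the set function $B^1 \mapsto \rho(B^1 \times (\fA^2 \cap [0,q]) \mid s)$ is a sub-probability measure on $\fA^1$ dominated by $\nu(\cdot\mid s)$, so it has a Radon--Nikodym derivative $H_q(s, \cdot)$ with respect to $\nu(\cdot\mid s)$. Fixing once and for all a refining sequence of countable measurable partitions of $\fA^1$, one can write $H_q(s, a^1)$ as an explicit pointwise limit of ratios of simple functions that are jointly measurable in $(s, a^1)$; martingale convergence provides the limit and joint measurability is preserved under countable suprema and pointwise limits. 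Off a $\nu$-null set of $a^1$, the map $q \mapsto H_q(s, a^1)$ can then be regularized to be nondecreasing and right-continuous with values in $[0,1]$, hence defines the CDF of a unique probability measure on $\fA^2$; this measure is declared to be $\mu(\cdot\mid s, a^1)$, with an arbitrary fixed probability measure chosen on the exceptional null set. The identity $\rho = \nu \measprod \mu$ is then verified on the $\pi$-system of measurable rectangles of the form $B^1 \times (\fA^2 \cap [0,q])$ and extended by monotone class.

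The main obstacle is the joint-measurability step: the classical disintegration gives $\mu_s$ for each $s$ separately, but gluing these into a stochastic kernel on $\fA^2$ given $S \times \fA^1$ requires a careful, canonical construction rather than an ad hoc pointwise choice. This is exactly what the martingale/CDF construction sketched above achieves, and it uses the Borel structure of $\fA^2$ in an essential way through the countable determining family $\{\mathds{1}_{[0,q]}\}_{q \in \QQ \cap [0,1]}$. This is precisely the content of the construction underlying \cite[Corollary~3.6]{kallenberg2002foundations}, which I would invoke.
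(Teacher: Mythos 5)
Your proposal is correct and is essentially the standard construction behind the result the paper relies on: the paper gives no proof of Lemma~\ref{le:Disintegration} and simply invokes \cite[Corollary 3.6]{kallenberg2002foundations}, whose proof is exactly the Radon--Nikodym/martingale argument with a countable determining family that you sketch. The only detail you elide is that the regularized CDF a priori defines a measure on $[0,1]$ rather than on $\fA^2$, so one must additionally check, using $\rho(\fA^1 \times \fA^2 \mid s) = 1$ together with the rectangle identity, that it gives full mass to $\fA^2$ for $\nu(\cdot \mid s)$-a.e.\ $a^1$ (adjusting on the exceptional set); this is routine.
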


Let $(\fX, \cB_\fX )$ and $(\fA, \cB_\fA)$ be two Borel spaces, and let $\rho \in \cP(\fX \times \fA)$ be a probability measure on the product measurable space $(\fX \times \fA, \cB_\fX \otimes \cB_\fA)$. Denote by $\rho_\fX := \rho(\cdot \times \fA)$ the marginal of $\rho$ on $\fX$.
By the Disintegration Lemma~\ref{le:Disintegration}, there exists a stochastic kernel $\rho(\cdot \mid x): \cB_\fA \to [0,1]$ for $\rho_\fX$-almost every $x \in \fX$, such that for all $\sS \in \cB_\fX$ and $\sA \in \cB_\fA$,
$
\rho(\sS \times \sA) = \int_{\sS} \rho(\sA \mid x) \rho_\fX(dx).
$
We denote this stochastic kernel by $\rho(\cdot \mid \cdot)$, where the arguments will always be clear from the context.

Let $\nu \in \cP(\fX)$, and $g:\fX \to \fA$ be a measurable mapping. We denote by $\nu \circ g^{-1}$ the push-forward of the measure $\nu$ by $g$. We use both notations $\PP_\xi$ and $\cL(\xi)$ interchangeably for the distribution of a random element $\xi$.

We say that a function $ f : \fX \times \fA \to C $, where $ (C, d) $ is a metric space, is a \defi{Carathéodory function} if the following two conditions hold: for every $a \in \fA$, the mapping $ x \mapsto f(x, a)$ is measurable on $(\fX,\cB_\fX)$, and for every $x \in \fX$, the mapping $ a \mapsto f(x, a)$ is continuous on $\fA$. The set of such functions is denoted by $\mathrm{Car}(\fX \times \fA, C) $. If, in addition, $f$ is bounded, we write $ f \in \mathrm{Car}_b(\fX \times \fA, C) $. Moreover, if the space $ \fA $ is separable, then any function $ f \in \mathrm{Car}(\fX \times \fA, C) $ is jointly measurable with respect to the product $\sigma$-algebra $\cB_\fX \otimes \cB_\fA $; see \cite[Lemma 4.51]{AliprantisBorder}. For any analytic subset $\Gamma \subset \fX \times \fA$, we say that the function $f: \fX \times \fA \to C$ is a \defi{Carathéodory function over $\bm{\Gamma}$} if $f$ is a measurable function and for every $ x \in \fX $, the mapping $ a \mapsto f(x, a) $ is continuous on $U(x)$, where $U(x) = \{a \in \fA \mid (x,a) \in \Gamma \}$.
The set of such functions is denoted by $ \mathrm{Car}(\Gamma, C) $. If, in addition, $ f $ is bounded, we write $ f \in \mathrm{Car}_b(\Gamma, C) $.

Let $\fA$ and $C$ be two metrizable spaces, and $U \subset \fA$ be a closed subset. Let $f:U \to C$ be a mapping. We say that the mapping $\tilde f: \fA \to C$ is an \defi{extension} of $f$ over $\fA$ if and only if $\tilde f(a) = f(a)$ for every $a \in U$.

Given a Borel space $(\fX, \cB_\fX )$ and a probability measure $\lambda \in \cP(\fX)$, we denote by
$$
L^1(\fX, \cB_\fX , \lambda) = \left\{ f:\fX \to \RR \;\bigg|\; \| f\|_{L^1(\fX, \cB_\fX , \lambda)} = \int_{\fX} | f(x) | \lambda(dx) < \infty \right\}
$$
the family of measurable functions (identifying those which are $\lambda$-a.s. equal). Also,
$$
L^\infty(\fX, \cB_\fX , \lambda) = \{f:\fX \to \RR \mid \exists M\in \RR_+ \text{ such that } |f(x)| \leq M \; \; \lambda\text{-a.e.} \}
$$
denotes the set of $\lambda$-essentially bounded measurable functions (again, we identify functions that coincide $\lambda$-a.s.). We will denote by $\|f\|_{L^\infty(\fX, \cB_\fX , \lambda)}$ the corresponding essential supremum. On $L^\infty(\fX, \cB_\fX , \lambda)$, we will consider the \textbf{weak-star} topology (denoted weak-$*$), that is,
$
f_n \xrightharpoonup[n\to\infty]{L^\infty(\fX, \cB_\fX , \lambda)} f
$
if and only if
$
\int_{\fX}f_n(x)h(x)\lambda(dx) \xrightarrow[n\to \infty]{} \int_{\fX}f(x)h(x)\lambda(dx) \quad \text{for each } h \in L^1(\fX, \cB_\fX , \lambda).
$

Let $(\fX, \mathcal{B}_\fX)$ and $(\fA, \mathcal{B}_A)$ be two Borel spaces. A \defi{correspondence} from $\fX$ to $\fA$ is a mapping $U$ from $\fX$ to the power set of $\fA$, denoted by $2^\fA = \{\sA \mid \sA \subset \fA \}$, i.e., $U : \fX \to 2^\fA$. We denote it by $U : \fX \twoheadrightarrow \fA$, where for each $x \in \fX$, $U(x) \subset \fA$ is the set of available values associated with $x$. We say that a correspondence is \defi{weakly measurable} if for every $\sA \in \cB_{\fA}$, the set $\{x \in \fX \mid U(x) \cap \sA \neq \emptyset\}$ is a Borel set. Note that, for every measurable mapping $f:\fX\to \fA$, the condition for weak measurability of the correspondence $x \mapsto \{f(x)\}$ coincides with the definition of measurability for the function $f$.

For more details on questions related to measurability, we refer the reader to the textbooks \cite{bertsekas_1878_book_stocha}, \cite{Kallenberg_RM}, and \cite{kallenberg2002foundations}.

\section{\textbf{Definition of MFTG Model and Problem}}
\label{section:MFTG_model}

\subsection{Elements of MFTG}

\begin{definition}[MFTG model]
	\label{def:MFTG_model}
	An infinite-horizon discounted \defi{mean-field-type game (MFTG)} model with common noise, denoted by the tuple
	$(m, \ufX, \ufA, \uE, \uE^{0}, \uF, f^1, \dots, f^m, \gamma),$
	comprises the following elements:
	\begin{itemize}
		\item A number of teams, $ m \in \NN^* $.
		
		\item $ \ufX = \fX^1 \times \dots \times \fX^m $ where  $ (\fX^1, \cB_{\fX^1}), \dots, (\fX^m, \cB_{\fX^m}) $ are $ m $ Borel spaces for the state spaces. We denote a generic state profile by $ \ux \in \ufX $.
		
		\item $ \ufA = \fA^1 \times \dots \times \fA^m $ where $ (\fA^1, \cB_{\fA^1}), \dots, (\fA^m, \cB_{\fA^m}) $ are $ m $ Borel spaces for the action spaces. We denote a generic action profile by $ \ua \in \ufA $.

        \item $ \uE = E^1 \times \dots \times E^m $ where $ (E^1, \cB_{E^1}), \dots, (E^m, \cB_{E^m}) $ are $ m $ Borel spaces corresponding to the idiosyncratic noises.
        
		\item $\uE^0 = E^{0,0} \times \dots \times E^{0,m}$ where $ (E^{0,0}, \cB_{E^{0,0}}), \dots, (E^{0,m}, \cB_{E^{0,m}}) $ are $ m + 1$ Borel spaces; $E^{0,0}$ corresponds to the space of the global common noise, and for each $i \in [m]$, $E^{0,i}$ corresponds to the space of the common noise affecting players of the team $i$. 
		
		\item $ \uF : \ufX \times \ufA \times \cP(\ufX \times \ufA) \times \uE \times \uE^0 \to \ufX $ is the joint system function, where the coordinate functions are $ m $ Borel measurable functions given by $ F^i : \fX^i \times \fA^i \times \cP(\ufX \times \ufA) \times E^i \times E^{0,i} \times  E^0 \to \fX^i $ for each team $ i \in [m] $.
		
		\item $ m $ bounded Borel measurable functions $ f^i : \fX^i \times \fA^i \times \cP(\ufX \times \ufA) \to \RR $, for each $ i \in [m] $, called the \emph{one-stage cost functions}.
		
		\item A discount factor $ \gamma \in (0,1) $.
	\end{itemize}
\end{definition}

For each $i \in [m]$, we use the system function $F^i$ to describe the evolution of the states of the representative agent of team $i$. Using system functions allow us to specify the dependency of the system with the different level of noises.
In what follows, we introduce additional spaces to clarify the origins of the different sources of randomness, specify the filtrations associated with shared information, states, and actions, and finally describe how the mean-field interactions are conditioned.

\subsection{Probabilistic Set-up of the MFTG Model}
\label{subsec:proba_set_up}

Building on the intuition provided by the finite-population $m$-team game in the previous section, we now introduce the precise spaces and distributions associated with the different sources of randomness. 

\vskip 5pt
We assume that all the sources of randomness are from a probability space $(\Omega,\cF,\PP)$ supporting:

\begin{enumerate}[label=(\roman*)]
    \item For each team $ i \in [m] $, an i.i.d. sequence of random variables $ \bm{\varepsilon}^i = (\varepsilon^i_{n+1})_{n \geq 0} $, with distribution $ \nu^i \in \cP(E^i) $, modeling the idiosyncratic random shocks. We denote by $ \unu = \bigotimes_{i=1}^m \nu^i $ the product measure over all teams.
    
    \item An i.i.d. sequence $ \bvarepsilon^{0,0} = (\varepsilon^{0,0}_{n+1})_{n \geq 0} $ of random variables, with distribution $ \nu^{0,0} \in \cP(E^{0,0}) $, modeling the common noise affecting all the players of all the teams.

    \item For each team $ i \in [m] $, an i.i.d. sequence of random variables $ \bm{\varepsilon}^{0,i} = (\varepsilon^{0,i}_{n+1})_{n \geq 0} $, with distribution $ \nu^{0,i} \in \cP(E^i) $, modeling the common noise affecting players of team $i$. We denote by $ \unu^0 = \bigotimes_{i=0}^m \nu^{0,i} $ the product measure over all the common noises. For each $n \in \NN^*$, we denote by $\uvarepsilon^0_n = (\varepsilon^{0,0}_n, \dots,\varepsilon^{0,m}_n)$ the tuple of the common noises, emphasizing
    that it consists of a total of $m+1$ components (and not $m$).
    
    \item For each team $ i \in [m] $, a random variable $ \mathscr{U}^i $ with distribution $ \PP_{\mathscr{U}^i} $ on a Borel space $ (\Upsilon^i, \cB_{\Upsilon^i}) $, providing a source of randomization for the initial state.
    
    \item For each team $ i \in [m] $, an i.i.d. sequence $ (\vartheta^i_{n})_{n \geq 0} $ of random variables, taking values in a Borel space $ (\Theta^i, \cB_{\Theta^i}) $, with common distribution $ \PP_{\vartheta^i} $. This sequence provides a source of private randomization for representative agents action choices in team $ i $.
    
    \item For each team $ i \in [m] $, an i.i.d. sequence $ (\vartheta^{0,i}_{n})_{n \geq 0} $ of random variables, taking values in a Borel space $ (\Theta^{0,i}, \cB_{\Theta^{0,i}}) $, with common distribution $ \PP_{\vartheta^{0,i}} $. This sequence provides a source of randomization for the central player influencing team $ i $.
\end{enumerate}

We assume that all these random sequences are independent of each other. We also assume that  $\PP_{\vartheta^1}, \dots, \PP_{\vartheta^m},$ and $\PP_{\vartheta^{0,1}}, \dots, \PP_{\vartheta^{1,m}} $  are all atomless.

This guarantees the existence of Borel measurable functions $h^{\Theta^i}:\Theta^i\to[0,1]$, $h^{\uTheta}:\uTheta\to[0,1]$, $h^{\Theta^{0,i}}:\Theta^{{0,i}}\to[0,1]$ and $h^{\uTheta^0}:\uTheta^{0}\to[0,1] $ which are uniformly distributed when viewed as random variables on the probability spaces $(\Theta^i,\cB_{\Theta^i},\PP_{\vartheta^i})$, $(\uTheta,\cB_{\Theta^1}\otimes \cdots \otimes \cB_{\Theta^m},\PP_{\uvartheta})$,  $(\Theta^{0, i},\cB_{\Theta^{0, i}},\PP_{\vartheta^{0,i}})$ and $(\uTheta^{0},\cB_{\uTheta^{0}},\PP_{\uvartheta^{0}})$  respectively. 
The uniform random variables $h^{\Theta^i}(\vartheta^i)$, $h^{\uTheta}(\uvartheta)$, $h^{\Theta^{0,i}}(\vartheta^{0,i})$ and $h^{\uTheta^0}(\uvartheta^0)$ will be used repeatedly with Lemma~\ref{le:BlackwellDubins}.

\subsection{Classes of Policies}

\subsubsection{Filtrations, action processes, and control processes}
\label{subsec:filtractions-action-controls} 

Several types of randomness have already been identified, corresponding to representative agent level, and central player level. To distinguish these sources of randomness precisely and to formalize how information is shared and actions are chosen, we introduce different filtrations. These filtrations will allow us to clearly define the decision-making processes at each level and to study the resulting mean-field interactions. These are the building blocks to define later the notion of policy (see Paragraph~\S~\ref{subsub:closed-loop_policies}).

The $\sigma$-field for the profile of initial states is denoted by $\cF_{x_0} = \sigma(\underline{\sU})$. For each team $i \in [m]$, the $\sigma$-field for the profile of initial state is denoted by $\cF_{x_0^i} = \sigma(\sU^i)$.
We introduce some filtration to define the actions and controls processes. 
The filtrations of the idiosyncratic noise and the global common noise are denoted by $\cF^{\uvarepsilon}$ and $\cF^{\varepsilon^{0,0}}$ respectively and defined as:
$$
    \cF_0^{\uvarepsilon} = \cF_0^{\varepsilon^{0,0}} = \{ \emptyset, \Omega \},
    \quad 
    \cF_n^{\uvarepsilon} = \sigma( \uvarepsilon_1, \dots, \uvarepsilon_n),
    \quad 
    \cF_n^{\varepsilon^{0,0}} = \sigma( \varepsilon^{0,0}_1, \cdots, \varepsilon^{0,0}_n ), \quad \  n \geq 1.
$$
For each $i\in[m]$, the filtration of the common noises of team $i$ is:
$
    \cF_0^{\varepsilon^{0,i}} = \{ \emptyset, \Omega \},
    $ $ 
    \cF_0^{\varepsilon^{0,i}} = \sigma( \varepsilon^{0,i}_1, \dots, \varepsilon^{0,i}_n).
$
The filtration of all the common noises is defined by: 
$
    \cF^{\uvarepsilon^0}_n = \cF^{\varepsilon^{0,0}}_n\vee\cdots\vee\cF^{\varepsilon^{0,m}}_n.
$
For each $i\in[m]$, the filtration of the idiosyncratic action randomization of team $i$  is:
$
    \cF_n^{\Theta^i} = \sigma( \vartheta^i_0, \ldots, \vartheta^i_n ),$ $n \geq 0.
$
The filtration of all the idiosyncratic actions randomization is:
$
    \cF_n^{\uTheta} = \sigma( \uvartheta_0, \ldots, \uvartheta_n ), $ $n \geq 0.
$
For each $i\in[m]$, the filtration of the common policies  randomization of team $i$ is:
$
    \cF_n^{\Theta^{0,i}} = \sigma(\uvartheta^{0,i}_0, \ldots, \uvartheta^{0,i}_n ),$ $n \geq 0.
$
The filtration of all the common policies  randomization is:
$
    \cF_n^{\uTheta^0} = \sigma(\uvartheta^{0}_0, \ldots, \uvartheta^{0}_n ),$ $n \geq 0.
$

The following filtrations represent the information available to the central players and to the level-$0$ agents, for respectively choosing policies and actions. Note that, two of these filtrations depend of the subscript $i \in [m]$. Let $\FF^0 = (\cF_n^0)_{n \geq 0}$, $\GG^{c,i} = (\cG_n^{c,i})_{n \geq 0}$, $\GG^c = (\cG_n^c)_{n \geq 0}$, 
$\GG^{a,i} = (\cG_n^{a,i})_{n \geq 0}$, 
$\GG^a = (\cG_n^a)_{n \geq 0}$ defined by 
$\cF_0^0 = \sigma(\underline{\vartheta}^0_0 )$,
$\cG_0^{c,i} = \sigma( \underline{\sU}, \vartheta_0^{0,i} )$, $\cG_0^{c} = \sigma( \underline{\sU}, \underline{\vartheta}_0^{0} )$, 
$\cG_0^{a,i} = \sigma(\underline{\sU}, \vartheta^{0,i}_0, \vartheta_0^i)$, 
$\cG_0^a = \sigma(\underline{\sU}, \underline{\vartheta}^0_0, \underline{\vartheta_0})$, 
and:
\begingroup
\allowdisplaybreaks
\begin{align*}
    & \cF_n^0 = \cF_n^{\uvarepsilon^0} \vee \cF_n^{\uTheta^0}, 
    \quad n \geq 1,
    \\
     &\cG_n^{c,i} = \cF_{x_0^i} \vee \cF_n^{\varepsilon^i} \vee \cF_n^{\uvarepsilon^0} \vee \cF_{n-1}^{\uTheta^0} \vee \cF_{n-1}^{\Theta^i} \vee \cF_{n}^{\Theta^{0,i}}
    ,   
    \quad n \geq 1,
    \\
     &\cG_n^c = \cF_{x_0} \vee \cF_n^{\uvarepsilon} \vee \cF_n^{\uvarepsilon^0} \vee \cF_n^{\uTheta^0} \vee \cF_{n-1}^{\uTheta}
    ,   
    \quad n \geq 1,
    \\
      &\cG_n^{a,i} = \cF_{x_0^i} \vee \cF_n^{\varepsilon^i} \vee \cF_n^{\uvarepsilon^0} \vee \cF_{n-1}^{\uTheta^0} \vee \cF_{n-1}^{\Theta^i} 
      \vee \cF_{n}^{\Theta^{0,i}}
      \vee \cF_{n}^{\Theta^i}
    ,
    \qquad n \geq 1,
    \\
      &\cG_n^a = \cF_{x_0} \vee \cF_n^{\varepsilon^i} \vee \cF_n^{\uvarepsilon^0} \vee \cF_n^{\uTheta^0} \vee \cF_{n}^{\Theta^i}
    ,
    \qquad n \geq 1.
\end{align*}
\endgroup

Here, the superscripts $c$ in $\GG^c$ stands for ``control" and $a$ in $\GG^a$ stands for ``action". In Definition~\ref{def:action-control-proc} below $\GG^c$ will be used to define control processes while $\GG^a$ will be used to define action processes below. At time step $n \in \NN$, to pick a control, the central player of the team $i$ uses the $i$-th common randomization $\vartheta^{0,i}_n$. That explains the presence of $\cF^{\Theta^{0,i}}_n$ in  $\cG_n^{c,i}$. To pick an action following this policy, the agent uses their individual randomness $\vartheta^i_n$, and therefore we have $\cG_n^{a,i} = \cG_n^{c,i} \vee \cF_{n}^{\Theta^i}$. The presence of $\cF_n^{\uvarepsilon^0} \vee \cF_{n-1}^{\uTheta^0}$ in $\cG_n^{c,i}$ and not just $\cF_n^{\varepsilon^{0,i}} \vee \cF_{n-1}^{\Theta^{0,i}}$ is justified because the central player will consider the conditional law $\PP^0_{(X^1_n, \dots, X^m_n)}$, which is a $\cF_n^{\uvarepsilon^0} \vee \cF_{n-1}^{\uTheta^0}$-measurable random variable, in order to decide a strategy. The filtrations $\GG^c$ and $\GG^a$ are introduced to characterize the measurability of a profile of controls and a profile of actions.

\vskip 6pt
We now describe how a central player and a generic agent choose their actions and policies using the filtrations defined above. We will refer to the \emph{level-0} framework for anything related to a generic agent. In Section~\ref{section:MFMG}, we will introduce a lifted stochastic optimization model, whose elements will be referred to as \emph{level-1}. For reasons that will become clear later, the representative central player of a team will be called the \emph{level-1 player} of this team.

\begin{definition}
    \label{def:action-control-proc}
    For each $i \in [m]$, a \defi{level-0 action associated to team $\bm{i}$} is an element of $\fA^i$. A \defi{level-0 (mixed) control associated to team $\bm{i}$} is a random probability measure on $(\fA^i, \cB_{\fA^i})$, that is, a random variable with values in the Borel space $(\cP(\fA^i), \cB_{\cP(\fA^i)})$. A \defi{level-0 action process associated to team $\bm{i}$} is a sequence of random variables $\balpha^i = (\alpha^i_n)_{n \geq 0}$ with values in $\fA^i$ which is adapted to the filtration $\GG^{a, i}$. The set of such action processes is denoted by $\AA^i$. A \defi{level-0 control process associated to team $i$} is a sequence $\bfa^i = (\fa^i_n)_{n \geq 0}$ of level-0 controls which is adapted to the filtration $\GG^{c,i}$.
    Finally, an  action process $\balpha^i = (\alpha_n^i)_{n \geq 0}$ is said to be a \defi{realization} of a level-0 control process  $\bfa^i = (\fa_n^i)_{n \geq 0}$ if 
    $\cL \big( \alpha_n^i \, | \, \cG_n^{c, i} \big) = \fa_n^i$,
    $\PP-a.s.$,  for every $n \geq 0$.  We extend naturally all these definitions to a \defi{profile of level-0 actions}, a \defi{profile of level-0 (mixed) controls}, a \defi{profile of level-0 action processes}, and a \defi{profile of level-0 control processes}. 
\end{definition}

Intuitively, an action process is the realization of a control process, where the sampling is performed using $(\vartheta^i_n)_{n \ge 0}$. We use the term \emph{mixed} for the associated probability measures, and \emph{randomized} for the corresponding random variables. A generic level-0 action of the $i$-th central player will be denoted by $a^i$, while a generic level-0 control will be denoted by $\fa^i$. Even if we do not specify it, all level-0 controls considered are implicitly assumed to be mixed.

It can be shown (see \cite[Lemma 39]{CarmonaLauriereTan-2019-mfqrl}) that, for any level-0 control process $\fa^i$ and any two realizations $\balpha^i,\balpha^{' i}$ of $\fa^i$, every bounded Borel measurable function $h$, 
$\EE \left[ h( \alpha_n^{'i} ) \, | \, \cG_n^c \right] = \displaystyle \int_A h(a^i) \fa_n^i(d a^i) = 	\EE \left[ h( \alpha^i_n ) \, | \, \cG_n^{c,i} \right]$, $\PP$-a.s., $n \ge 0$.

\subsubsection{Conditional distribution and state process}\,

We can now describe the mean-field interactions and show how the dynamic of the states processes are driven by mixed control processes, affected by common noises.

\begin{definition}%
    	\label{def:MFTG-state_process_from_control_process}
    For any initial distribution $\mu_0 \in \cP(\ufX)$ and a profile of level-0 action processes $\ubalpha$, we say that  a profile of processes $\ubX^{\ubalpha,\mu_0} = (\bX^{\ubalpha, \mu_0,1}, \dots , \bX^{\ubalpha, \mu_0,m}) $ is a \defi{profile of state processes associated to} $(\ubalpha, \mu_0)$ for the MFTG model if:  
    For each $i\in[m]$, $\uX_0^{\ubalpha, \mu_0, i} $ is an $\fX^i$-valued $\sigma(\sU^i)$-random variable with distribution $\mu_0$,
    and for every $n \geq 0$, 
        \begin{equation}
            \label{eq:MFGT-system_dynamics_level_0}
            X_{n+1}^{\ubalpha, \mu_0, i} = F^i\big( X_n^{\ubalpha, \mu_0, i}, \alpha^{i}_n, \PP^0_{(\uX_n^{\ubalpha, \mu_0}, \ualpha_n)}, \varepsilon^{i}_{n+1}, \varepsilon^{0,i}_{n+1},
            \varepsilon^{0}_{n+1} \big), \quad \forall \, i \in [m],
        \end{equation}
        which can be rewritten as:
        \begin{equation}
            \label{eq:MFGT-system_dynamics_level_0_full}
            \uX_{n+1}^{\ubalpha, \mu_0} = \uF\big( \uX_n^{\ubalpha, \mu_0}, \ualpha_n, \PP^0_{(\uX_n^{\ubalpha, \mu_0}, \ualpha_n)}, \uvarepsilon_{n+1}, \uvarepsilon^{0}_{n+1} \big), 
        \end{equation}
        where $\PP^0_{(\uX_n^{\ubalpha, \mu_0}, \ualpha_n)}$ is a regular version of $\cL\bigl( (\uX_n^{\ubalpha, \mu_0}, \ualpha_n) \mid\cF_n^0 \bigr)$, the conditional joint distribution of family of state-action processes at time $n$ with respect to common noises and common randomization.
\end{definition}

The process of profile of states $\ubX^{\ubalpha, \mu_0}$ is adapted to the filtration $\GG^x = (\cG_n^x)_{n \geq 0}$, defined by:
\begin{equation}
	\label{eq:filtration_G^x}
    \cG_0^x = \sigma(\underline{\sU} ), 
			\qquad \cG_n^x = \cF_{x_0} \vee \cF_n^{\varepsilon} \vee \cF_{n}^{\uvarepsilon^0} \vee \cF_{n-1}^{\uTheta^0} \vee \cF_{n-1}^{\uTheta}, \quad \  n \geq 1.
\end{equation}
Notice that, compared with $\cG_n^c$ and $\cG_n^{a}$, $\uvartheta_n$ and $\uvartheta_n^0$ are absent from the definition of $\cG_n^x$. This is related to the fact that $\uX_n^{\ubalpha, \mu_0}$  does not depend on $\uvartheta_n$ and $\uvartheta_n^0$. These random variables are used to define the controls at time $n$, which will in turn influence the state at the next time step, namely, $\uX_{n+1}^{\ubalpha, \mu_0}$.

As for $\PP^0_{(\uX_n^{\ubalpha, \mu_0}, \ualpha_n)}$, for each profile of level-$0$ action processes $\ubalpha$ and each  $n \geq 0$, we denote by $\PP^0_{\uX_n^{\ubalpha, \mu_0}}$ a regular version of the conditional distribution $\cL(\uX_n^{\balpha, \mu_0}\,|\,\cF^0_n)$. It holds:
\begin{equation}
\label{fo:P0X_n}
    \PP^0_{\uX_n^{\ubalpha, \mu_0}}
    =\cL(\uX_n^{\ubalpha, \mu_0}\,|\,\sigma(\uvarepsilon^0_k,\underline{
    \vartheta}^{0}_{k-1}, \, 1 \leq k \leq n)),  \qquad \mathbb{P}-a.s.,
\end{equation}
because $\uX_n^{\ubalpha, \mu_0}$ is $\cG_n^x$-measurable, and hence $\uX_n^{\ubalpha, \mu_0} \perp_{ \cF_n^{\uvarepsilon^0} \vee \cF_{n-1}^{\uvartheta^0} } \uvartheta_n^0$.

\subsubsection{Closed-loop polices}\label{subsub:closed-loop_policies}

We now introduce the concept of closed-loop policies. These policies are the focus of our study because, unlike open-loop policies, they are more commonly used in practical applications due to their relative ease of implementation.

\begin{definition}%
	\label{def:Markovian_policy}
    For each team $i \in [m]$ we say that $\pi^i$ is a \defi{closed-loop Markov strategy function associated to team $\bm{i}$} if it is a measurable function from $\fX^i \times \cP(\ufX) \times \Theta^{0,i}$ into $\cP(\fA^i)$. 
	A \defi{closed-loop Markov policy associated to team $\bm{i}$}  is a sequence $\bpi^i=(\pi^i_n)_{n\ge 0}$  of such functions.  
    The set of all closed-loop Markov policies associated to team $i$ is denoted by $\bPi^{\tinycl, i}$. 
    We naturally extend these definitions to a \defi{profile of level-0 close-loop policies} with a generic element denoted $\ubpi$, a \defi{ profile of level-0 close-loop strategy functions} with a generic element denoted $\underline{\pi}$, and the set of all the profiles of level-0 closed-loop policies denoted $\underline{\bPi}^{\tinycl}$. 
\end{definition} 

The Markov policies are chosen in this form because the dynamics~\eqref{eq:MFGT-system_dynamics_level_0} and the cost~\eqref{eq:MFTG-J_alpha} depend on both the state and the mean-field. Since we work with mixed strategies, a policy associated with team $i$ takes values in $\cP(\fA^i)$. This highlights that an action $\alpha_n^i \in \fA^i$, drawn according to such a policy, is sampled from a probability measure that depends directly on the values of $X_n^{\balpha, \mu_0, i}$, the law $\PP^0_{\uX_n^{\balpha, \mu_0}}$, and the random variable $\vartheta_n^{0, i}$. The following definition formalizes this mechanism. 

\begin{definition}%
	\label{def:admissible_state_action_processes}
	
	For a profile of closed-loop Markov policies $\ubpi\in \underline{\bPi}^\tinycl$ and an initial distribution $\mu_0 \in \cP(\ufX)$, a pair of profile of states and profile of actions processes $(\ubX, \ubalpha)$ is said to be \defi{generated by} $(\ubpi, \mu_0)$ if: 
    
	\begin{enumerate}[label=\roman*)]
		\item $\ubX$ is a state process associated to $(\ubalpha, \mu_0)$ in the sense of  Definition~\ref{def:MFTG-state_process_from_control_process}.  
		\item For any $i \in [m]$ the action process $\balpha^i$ is adapted to $\GG^{a,i}$ and satisfies
		\begin{equation}
			\label{eq:definition_action_Markov_closed_loop}
			\cL \big( \alpha^i_n \, | \,  \cG_n^{c,i} \big) = \pi^i_n \big(  X^i_n, \, \PP^0_{\uX_n},\, \vartheta_n^{0,i} \big), \qquad \PP-a.s., \qquad n \geq 0, \, \forall \, i \in [m].
		\end{equation}

        Notice that for each $i, j \in [m], i \ne j$, we have $\cG^{a,i}_n \perp_{\cG^c_n} \cG^{a,j}_n$, therefore $\alpha_n^i \perp_{\cG^c_n} \alpha^j_n$, and~\eqref{eq:definition_action_Markov_closed_loop} is equivalent to the following: 
        \begin{equation}
        \label{eq:definition_action_Markov_closed_loop_full}
			\cL \big( \ualpha_n \, | \,  \cG_n^c \big)(d\ualpha) = \prod_{i = 1}^m \pi^i_n \big(  X^i_n, \, \PP^0_{\uX_n},\, \vartheta_n^{0,i} \big)(d\alpha^i) , \qquad \PP-a.s., \qquad n \geq 0.
		\end{equation}
	\end{enumerate}
\end{definition}

The state and action processes are constructed simultaneously, using the system dynamics~\eqref{eq:MFGT-system_dynamics_level_0_full} and the sampling procedure with~\eqref{eq:definition_action_Markov_closed_loop_full}. 

As described in \cite{CarmonaLauriereTan-2019-mfqrl}, a convenient way to construct an action process $\ubalpha$ satisfying \eqref{eq:definition_action_Markov_closed_loop_full} is to use the Blackwell-Dubin's lemma (Lemma~\ref{le:BlackwellDubins}). Indeed, if $\rho_{\ufA}$ is the Blackwell-Dubin's function of $\ufA$ and the uniformly distributed random variables $U_n$ is given by $U_n =h^{\underline{\Theta}}(\underline{\vartheta}_n)$, we can choose $\alpha_n= \rho_{\ufA} \big( \pi^1_n  \big(  X^i_n, \,  \PP^0_{\uX_n} ,\, \vartheta_n^{0,1} \big) \otimes \dots \otimes \pi^m_n  \big(  X^m_n, \,  \PP^0_{\uX_n} ,\, \vartheta_n^{0,m} \big) , U_n \big)$, $\PP$-a.s.,  $n \geq 0$.

\begin{remark}
    As noticed in~\cite{CarmonaLauriereTan-2019-mfqrl}, even though we call a policy $\bpi\in \bPi^{\tinycl}$ a ``Markov" policy, it does not imply any Markov property for the state process $\bX$ associated to such a policy. This abuse of terminology can be explained by our intention to work with level-1 Markov policies which will imply the Markov property for a lifted measure-valued state process constructed in the next section.  Also, since we only use the term ``Markov policy'' in the closed-loop setting, we shall most often drop the term Markov hereafter and only call them simply closed-loop policies. 
\end{remark}

\subsection{Value Functions and Equilibrium}

To complete the definition of a MFTG problem with closed-loop policies, we introduce the value function and the associated optimization problem. This problem will be formulated given an initial distribution $\mu_0 \in \cP(\ufX)$.
\begin{definition}%
	For any initial distribution $\mu_0 \in \cP(\ufX)$, and team $i \in [m]$, the \defi{value function associated to team $\bm{i}$}, when the central players choose a profile $\ubalpha$ of level-0 action processes is defined as  
	\begin{equation}
		\label{eq:MFTG-J_alpha}
			J^{\mu_0, i}(\balpha^i, \balpha^{-i}) := \EE \left[\sum_{n \geq 0} \gamma^n f^i\left(X_n^{\ubalpha, \mu_0,i}, \alpha^i_n,\PP^0_{(\uX_n^{\ubalpha, \mu_0}, \ualpha_n)} \right)  \right],
	\end{equation} 
	where the profile of state processes $\ubX^{\ubalpha, \mu_0}$ is associated to $(\ubalpha, \mu_0$) according to the dynamics~\eqref{eq:MFGT-system_dynamics_level_0}.
\end{definition}

Note that the since $f^i$ is measurable and bounded, the value function $J^{\mu_0, i}(\balpha^i, \balpha^{-i})$ is well-defined for every $(\balpha^i, \balpha^{-i})$. This value depends only upon the sequence of joint distributions of the $\cP(\ufX\times\ufA)$-valued process $(\PP^0_{(\uX_n^{\ubalpha, \mu_0},\ualpha_n)} )_{n\ge 0}$. For any closed-loop policy $\ubpi$, we can show that $\mathbb{P}^0_{(\uX_n,\ualpha_n)}$ depends on the action process only through the policy $\ubpi$ provided $(\ubX,\ubalpha)$ is generated by $\ubpi$; see~\cite[Lemma 40]{CarmonaLauriereTan-2019-mfqrl}. As a consequence, we can define, for any level-$0$ action process $\balpha$  generated by $\bpi$, 
$
        J^{\mu ,i }(\ubpi) = J^{\mu, i}(\ubalpha),
$ for each $i \in [m]$, for each $\mu \in \cP(\fX)$.

We now introduce the central definition in an MFTG problem: that of a Nash equilibrium.

\begin{definition}\label{Nash-equilibrium-profile}
    A profile of level-0 closed-loop policies $\ubpi^* \in \ubPi^{\tinycl}$ is:
    \begin{enumerate}[label=\roman*)]
        \item A \defi{level-0 local Nash equilibrium} associated to the initial distribution $\mu_0 \in \cP(\ufX)$ if:
        \begin{equation*}
            J^{\mu_0, i}(\bpi^{*,i}, \bpi^{*,-i}) \leq J^{\mu_0, i}(\bpi^i, \bpi^{*,-i}), \quad \forall \ubpi \in \ubPi^{\tinycl}, \,\forall i \in [m].
        \end{equation*}
        \item A \defi{level-0 Nash equilibrium in expectation} associated to the distribution $\eta \in \cP(\cP(\ufX))$ if:
        \begin{equation*}
            \EE_{\mu_0 \sim \eta}(J^{\mu_0, i}(\bpi^{*,i}, \bpi^{*,-i})) \leq \EE_{\mu_0 \sim \eta}(J^{\mu_0, i}(\bpi^i, \bpi^{*,-i})), \quad \forall \ubpi \in \ubPi^{\tinycl}, \,\forall i \in [m].
        \end{equation*}
    \end{enumerate}
\end{definition}

We are now about to state the principal result of our paper. To prove this result, we will introduce a new game in which the state is the joint law of states, which can be interpreted as the mean field. The assumptions are mainly related to this new game, and are therefore detailed in the next sections. 
\begin{theorem}
    \label{thm:Main_result}
    Suppose that Assumptions~\ref{assumption:at-most-countable}, \ref{assumption:continuity}, \ref{assumption:compactness} and~\ref{assumption:abs_continuity} hold. Then, for any distribution $\eta \in \cP(\cP(\ufX))$ of initial distributions, the MFTG admits a stationary Nash equilibrium in expectation associated to $\eta$ as defined in Definition~\ref{Nash-equilibrium-profile}.
\end{theorem}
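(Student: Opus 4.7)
The strategy is to reduce the problem to establishing the existence of a stationary Nash equilibrium in the lifted MFMG introduced in Section~\ref{section:MFMG}, and then to invoke the existence theorem of Dufour and Prieto-Rumeau~\cite{dufour_2024_ne_markov_game} on that lifted game. Concretely, I would proceed in three stages. First, appeal to the equivalence result stated in Theorem~\ref{corollary:mftg_mfmg_ne_equivalence}: any stationary Nash equilibrium in expectation of the lifted MFMG associated to the distribution $\eta \in \cP(\cP(\ufX))$ of initial mean-field states can be pulled back, via the randomization mechanism described in Section~\ref{sec:relations-models}, to a stationary level-0 closed-loop Nash equilibrium in expectation of the MFTG associated to $\eta$. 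Thus it suffices to prove existence on the lifted side, where the state is $\mu = \PP^0_{\uX_n}$ living in the Polish space $\cP(\ufX)$ and the $i$-th team's action is a (level-1) stochastic kernel living in an appropriate subset of $\cP(\fX^i\times \fA^i)$.

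Second, verify that the lifted MFMG satisfies the hypotheses of the Markov-game existence theorem from \cite{dufour_2024_ne_markov_game}. This requires checking: (a) the lifted state space $\cP(\ufX)$ is Polish (which holds by Assumption~\ref{assumption:at-most-countable}, since countability of $\ufX$ makes $\cP(\ufX)$ a separable, completely metrizable simplex); (b) the team-$i$ action correspondence $\mu \twoheadrightarrow U^i(\mu)$ is weakly measurable, nonempty-valued, and compact-valued, which should follow from Assumption~\ref{assumption:compactness} combined with the measurability theory recalled in Section~\ref{subsec:measurability}; (c) the one-stage cost $(\mu,\underline{\fa}) \mapsto \int f^i(x^i,a^i,\rho)\,\rho(dx^i,da^i,\ldots)$, now viewed as a function on the lifted state-action graph, is a Carath\'eodory function that is bounded --- boundedness being inherited from the boundedness of $f^i$ in Definition~\ref{def:MFTG_model}, and joint continuity in the lifted action variable following from Assumption~\ref{assumption:continuity}; (d) the lifted transition kernel on $\cP(\ufX)$ is weakly continuous in the lifted actions, which is where Assumption~\ref{assumption:continuity} on $\uF$ and Assumption~\ref{assumption:abs_continuity} on the noise law are crucial (the latter ensures that the push-forward under $\uF(\cdot,\cdot,\cdot,\uvarepsilon,\uvarepsilon^0)$ integrated against $\unu\otimes\unu^0$ behaves continuously in $(\mu,\underline{\fa})$, via a dominated-convergence/Scheff\'e-type argument on the at-most-countable state space).

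Third, apply \cite[main existence theorem]{dufour_2024_ne_markov_game} to obtain a stationary Markov Nash equilibrium $\underline{\pi}^*$ of the lifted MFMG for the initial distribution $\eta$ on $\cP(\ufX)$, and then transport it back through Theorem~\ref{corollary:mftg_mfmg_ne_equivalence} to produce the claimed stationary level-0 Nash equilibrium in expectation of the MFTG. The main obstacle, as I see it, is the fourth verification step (d): establishing weak continuity of the lifted transition on $\cP(\ufX)$ in the joint variable $(\mu,\underline{\fa})$, because the lifted dynamics involve a nested push-forward through $\uF$ that simultaneously depends on $\mu$, on the sampled states, and on the sampled actions, with the mean-field argument of $\uF$ itself being a function of $(\mu,\underline{\fa})$. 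Disentangling these dependencies requires carefully combining the Carath\'eodory regularity of $\uF$ in Assumption~\ref{assumption:continuity}, the absolute continuity in Assumption~\ref{assumption:abs_continuity} to handle the randomness of the noises, and the countability of $\ufX$ from Assumption~\ref{assumption:at-most-countable} to pass to the limit state-by-state; all other hypotheses of \cite{dufour_2024_ne_markov_game} should then follow more routinely.
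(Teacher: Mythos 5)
Your overall strategy coincides with the paper's: prove existence for the lifted MFMG by verifying the hypotheses of Dufour and Prieto-Rumeau, then transport the equilibrium back to the MFTG via Theorem~\ref{corollary:mftg_mfmg_ne_equivalence}. However, there are two technical steps that your plan omits and that the paper's proof of Theorem~\ref{thm:existence_stationary_LNE} spends most of its effort on. First, the existence theorem in \cite{dufour_2024_ne_markov_game} requires the one-stage costs and the transition density to be defined (and Carath\'eodory, resp.\ $L^1$-continuous) on the \emph{whole} product $\bar S \times \hatuA$, whereas the lifted game's data $\hat f^i$ and $P$ are only defined on the constraint set $\Sigma$. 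The paper resolves this by building an extended game $\tilde\bG$: it composes with the Wasserstein projection onto $\hatuU(\mu)$ (which is Carath\'eodory because $\hat U^i$ is a continuous correspondence with compact convex values) and uses the Carath\'eodory extension of $\Xi$ from Lemma~\ref{le:Xi-well-defined}, and then checks by induction that the extended game has the same value functions because admissible policies charge only $\hatuU(\mu)$. Your checklist (a)--(d) would stall at items (c) and (d) without this extension. Second, Theorem 3.2 of \cite{dufour_2024_ne_markov_game} also has an ``Assumption B'' requiring continuity of $\hatupi \mapsto \hat f^i_{\hatupi}$ and $\hatupi \mapsto P_{\hatupi}v$ on the space of Young measures $\ucY(\lambda)$ with the narrow topology; this is the content of Lemmas~\ref{le:continuity_hat_f_i_pi} and~\ref{le:continuity_P_pi} and does not appear in your verification list. (The reduction of the discounted game to an absorbing one, which the cited theorem formally addresses, is also needed but is standard.)

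One further calibration: you single out step (d), weak continuity of the lifted transition, as the main obstacle and propose to derive it from the primitives $\uF$, $\unu$, $\unu^0$ by a Scheff\'e-type argument. In the paper this is not derived at all: Assumption~\ref{assumption:abs_continuity} is stated \emph{directly on the lifted kernel} $P$ over $\Sigma$ (existence of a bounded density $q(\mu',\mu,\cdot)$ continuous in the action), and the required $L^1$-convergence of densities is then immediate by dominated convergence. So the hard analytic work you anticipate is assumed away at the level of the MFMG rather than proved from the MFTG data; if you intended to verify it from the primitives you would be proving something strictly stronger than what the theorem's hypotheses demand, and it is not clear that it holds without the absolute-continuity structure being imposed.
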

In this result, the assumptions do not concern the distribution $\eta$, and we can therefore recover the existence of a level-0 local Nash equilibrium with respect to any $\mu_0 \in \cP(\fX)$ by taking $\eta = \delta_{\mu_0}$.

The dependence of the dynamics $\uF$, the costs $\underline{f}$, and the strategies on the distribution over the family of joint state-action laws makes the problem more complex than in classical game settings. To simplify the analysis of MFTG problems, we aim to avoid working with an augmented state space consisting of both the agent's state and the distribution over joint state-action laws. Instead, we introduce a new classical game and then show how results from game theory can be used to establish that this new game admits a Nash equilibrium. This result, in turn, implies the existence of an equilibrium in the original MFTG problem.

\section{\textbf{Mean Field Markov Game}}
\label{section:MFMG} 

Mean Field Markov Games (MFMG) play the same role for MFTG as Mean Field Markov Decision Processes (MFMDP) do for Mean Field Control problems, as introduced in~\cite{CarmonaLauriereTan-2019-mfqrl}. The underlying idea is strictly analogous: to model a game in which each player is the central planner of a team. To achieve this, idiosyncratic noises and individual randomizations  are averaged out according to their laws, so that only the common noises and common randomizations remain in the model. In this framework, the state of the players is collectively represented by an element of $\cP(\ufX)$, and each player $i \in [m]$ selects actions in $\cP(\ufX \times \fA^i)$. The action space is thus $\cP(\ufX \times \fA^i)$ and not $\cP(\fA^i)$. This reflects the idea that agents act based on the family of joint state-action distributions and their own current state.

\subsection{Mean-field Markov Game Framework}

\begin{definition}
    \label{def:MFMG}
A  \defi{mean-field Markov game} consists of a tuple 
$$
(m, S, A^1, \dots, A^m, \hat U^1, \dots , \hat U^m, P, \hat f^1,\dots, \hat f^m, \gamma),
$$
as described below:
	\begin{itemize}
        \item A number of players, $m \in \NN^*$.
	   \item  The compact Polish space $S$, which serves as the underlying space measured by the mean fields. We consider the standard Borel space $(S, \cB_S)$. The actual state space of the game is then given by $\bar S:=\cP(S)$. $\bar S$ represents the state space of the players; a generic element in $\bar S$ is denoted by $\mu$
        \item The Polish space $A^i$, with $i \in [m]$. We consider the standard Borel space  $(A^i, \cB_{ A^i})$ associated to $A^i$. The actual action space of team $i$ is then given by the space of joint probability measures $\hat A^i := \cP(S \times A^i)$. A generic element of $\hat A^i$ is denotes $\hat a^i$.
        We define the profile of action space $\hatuA := \hat A^1 \times \cdots \times \hat A^m$, endowed with the product topology. A typical element of $\hatuA$ will be written $\hatua = (\hat a^1, \dots, \hat a^m)$.
        \item The continuous correspondence $\hat U^i: \bar S \twoheadrightarrow \hat A^i$, with $i \in [m]$, stands for the measurable mapping giving the available actions of player $i$. Given any $\mu \in \bar S$, the nonempty, measurable, convex and compact set $\hat U^i(\mu) \subset \hat A^i$ is the set of actions available to player $i$ at state $\mu$. It is defined by: 
        \begin{equation}
            \hat U^i(\mu) := \left\{ \hat a \in \hat A^i, \text{ such that } \pr_s(\hat a^i) = \mu \right\},
        \end{equation}
        where $\pr_s$ is defined naturally on every $\hat A^i$ by 
        \begin{equation*}
        \arraycolsep=1pt\def\arraystretch{1.5}
		\begin{array}{lll}
            \pr_s & : \hat A^i& \longrightarrow \bar S \\
              & \hat a^i & \longmapsto (ds \mapsto \int_{A^i} \hat a^i(ds, da^i))
        \end{array}
        \end{equation*}
         and $\hatuU(\mu) := \hat U^1(\mu) \times \cdots \times \hat U^m(\mu)$. 
		Let us define the set of admissible states and profile of actions:
		\begin{equation}
			\label{eq:Gamma_bar}
			\Sigma  := \left\{(\mu,\hatua)\in{(\bar S\times{\hatuA})};\; {\hat a^i}\in{\hat U^i}(\mu), \, i \in [m] \right\}.
		\end{equation} 
        We will also use for each player $i \in [m]$, the set of admissible states and actions: 
        \begin{equation}
			\label{eq:Gamma_bar_i}
			\hat \Gamma^i :=\{(\mu,\hat a^i)\in{(\bar S\times{\hat A^i})};\; {\hat a^i}\in{\hat U^i}(\mu)\}.
		\end{equation} 
		\item A transition stochastic kernel $P$ on $\bar S$ given $\Sigma $, which is Borel measurable.
        \item Given $i \in [m]$, the bounded measurable function $\hat f^i:\Sigma  \to \RR$. It stands for the one-stage cost function of player $i$. 
		\item The discount coefficient $\gamma \in (0,1)$.
	\end{itemize}	
\end{definition}

For measurability considerations regarding these spaces, we refer the reader to Paragraph~\S~\ref{subsec:measurability}. In this game, there is no state space specific to an individual player; the state space $\bar S$ is not a Cartesian product of separate spaces. Instead, it is a shared state among all players and corresponds to a mean-field defined over an abstract Borel metric space $S$. This underlying space may itself be Cartesian, as will be the case in the sequel. A player $i \in [m]$ does not simply choose a probability measure over $A^i$; rather, they select a probability measure on $\hat A^i = \cP(S \times A^i)$, thereby encoding the correlation between level-0 actions and the state in 
$S$. This formulation is strictly richer than using $\cP(A^i)$ as the action space for player $i$.

\begin{remark}
    \label{rmk:prx_Ui}
	We note that \cite[Remark 13]{CarmonaLauriereTan-2019-mfqrl} extends to our setting. The projection map  $\pr_s$ is continuous, so the constraint set $\hat U^i(\mu)$ is closed in $\hat A^i$ and measurable for every $\mu \in \bar S$ (the correspondence $\hat U^i$ is weakly measurable). The graph $Gr(\pr_s) := \{ (\mu, \hat a^i): \pr_s(\hat a^i) = \mu\} \subset \bar S \times \hat A^i$ is closed, so $\Sigma $ is also closed in $\bar S \times \hatuA$. Hence $\Sigma $ is an analytic subset of $\bar S \times \hatuA$ , and a Polish space on its own. We assume that $\Sigma $ is endowed with the induced topology as well as the trace $\sigma$-field inherited from $\bar S \times \hatuA$.
\end{remark}

\subsection{MFMG Markov Policies}\,
\vskip 5pt

By analogy with Definition~\ref{def:Markovian_policy} in MFTG, we now define the notions of mixed strategy and mixed Markov policy for MFMGs. %

\begin{definition}%
    \label{def:MFMG_POLICIES}
    We call \defi{level-1 mixed strategy function associated to player $\bm{i}$} any Borel measurable function $\hat\pi$ from $\bar S$ into $\cP(\hat A^i)$ satisfying
    	$
    		\hat \pi(\mu) (\hat U^i(\mu) )  = 1, \, \mu \in \bar S.
    	$ 
    We denote by $\hat\Pi^i$ the set of mixed strategy functions. 
    A \defi{mixed Markov policy associated to player $\bm{i}$} is an element of $\hat\bPi^i := (\hat\Pi^i)^{\NN}$.
    We say that a policy $\hat\bpi=(\hat\pi_n)_{n\ge 0}$ is \defi{stationary} if the strategy functions $\hat \pi_n$ are equal for all $n$. 
    We extend naturally these definitions to a \defi{profile of level-1 mixed strategy functions}, a \defi{profile of mixed Markov policies}. We denote by $\hatuPi$ the set of pure (resp. mixed) profiles of strategy functions. We denote by $\hatubPi$ the set of mixed profiles of policies.
\end{definition}

For any profile of strategies $\hatupi \in \hatuPi$, we will write with an abuse of notations, for any $\mu \in \bar S$, $\hatupi(\mu)$ for the product probability measures over $\hatuA$ obtained from $\hat \pi^1, \dots, \hat\pi^m$:
\begin{equation}
    \label{eq:hatupi_prod}
    \hatupi(\mu)(d\hatua) =\prod_{i=1}^m \hat\pi^i(\mu)(d\hat a^i), \quad \forall \mu \in \bar S. 
\end{equation}

In the spirit of the previous section, these policies should be called ``Markov'' policies. We restrict ourselves to these policies and refrain from using history dependent policies because ``Markov'' policies are the ones that are the most useful in practice.

\begin{definition}
    \label{def:MFMG_admissible_state_action_processes}
    Let $(\mu, \hatubpi) \in (\bar{S}, \hatubPi)$ be a pair consisting of a state distribution and a profile of policies. Let $(\bmu,\hatuba ) = ((\mu_n)_{n \ge 0}, (\hatua_n)_{n \ge 0})$ be two sequences of random variables taking values in the Borel spaces $(\bar S, \cB_{\bar S})$ and $(\hatuA, \mathcal{B}_{\hatuA})$. We say that $(\bmu,\hatuba)$ is \defi{generated by} $(\mu, \hatubpi)$ if and only if the following conditions hold:
    \begin{align}
        \mu_0 &= \mu, \\
        \mu_{n+1} &\sim P(\mu_n, \hatua_n), \quad &\forall i\in [m], \, \forall n \ge 0, \label{eq:level-1_gen_law_mun} \\
        \cL(\hat a^i_n\mid \mu_n) &= \hat\pi_n^i(\mu_n),  & \forall i \in [m],  \, \forall n \ge 0, \label{eq:level-1_gen_lawi} \\
        \hat a^i_n &\perp \hat a^j_n \quad \PP_{\mu_n}\text{-a.s.}, &\forall i,j\in [m] \text{ such that } i\ne j, \forall n\ge0, \label{eq:level-1_gen_ind}
    \end{align}
    where for every $n \ge 0$, $\PP_{\mu_n}$ is the conditional probability given $\mu_n$.
\end{definition}

Recalling the notation in~\eqref{eq:hatupi_prod}, equations \eqref{eq:level-1_gen_lawi} and \eqref{eq:level-1_gen_ind} are equivalent to:
    \begin{equation}
        \label{eq:level-1_gen_law}
        \cL(\hatua_n\mid \mu_n) = \hatupi_n(\mu_n), 
        \quad \forall n \ge 0,
    \end{equation}

\subsection{Value Functions and Equilibrium for MFMG}\,

\begin{definition}
    \label{def:MFMG_VALUE_FUNCTION}
	For any $\mu_0 \in \bar S :=\cP(S)$, and $i \in [m]$, the \defi{value function associated to player $\bm{i}$} in the MFMG, when the players use policy profile $\hatubpi$ is
	\begin{equation}
		\label{eq:MFMG_valuef}
			\hat J^{\mu_0, i}(\hat \bpi^i, \hat \bpi^{-i}) := \EE \left[\sum_{n \geq 0} \gamma^n \hat f^i(\mu_n, \hatua_n)  \right],
	\end{equation} 
	where the pair of state-profile of action processes  $(\bm{\mu}, \hatuba)$ is generated by $(\mu_0, \hatubpi)$.
\end{definition}

It can be shown that the value functions $\hat J^{\mu_0,i}(\hatubpi)$ given in~\eqref{eq:MFMG_valuef} is well defined because the expectation in~\eqref{eq:MFMG_valuef} does not depend upon the particular choice of the pair of state action processes $(\bm{\mu},\hatuba)$ generated by $(\mu_0, \hatubpi)$.
See Appendix~\ref{app:MFMG_valuef}.
We now give the definition of a Nash equilibrium for the MFMG: 

\begin{definition}\label{Nash-equilibrium-level-1}
    \label{def:MFMG_NASH}
    We say that a profile of policies $\hatubpi^* \in \hatubPi$ is:
    \begin{enumerate}[label=\roman*)]
        \item A \defi{local Nash equilibrium} associated to the initial distribution $\mu_0 \in \bar S$ if:
        \begin{equation*}
            \hat J^{\mu_0, i}(\hatubpi^{*,i}, \hatubpi^{*,-i}) \leq \hat J^{\mu_0, i}(\hatubpi^i, \hatubpi^{*,-i}), \quad \forall \hatubpi \in \hatubPi, \,\forall i \in [m].
        \end{equation*}
        \item A \defi{Nash equilibrium in expectation} associated to the distribution of mean-field  $\eta \in \cP(\bar S)$ if:
        \begin{equation*}
            \EE_{\mu_0 \sim \eta}(\hat J^{\mu_0, i}(\hatubpi^{*,i}, \hatubpi^{*,-i})) \leq \EE_{\mu_0 \sim \eta}( \hat J^{\mu_0, i}(\hatubpi^i, \hatubpi^{*,-i})), \quad \forall \hatubpi \in \hatubPi, \,\forall i \in [m].
        \end{equation*}
    \end{enumerate} 
\end{definition}

\subsection{MFMG Lifted from MFTG}

Before establishing the connection between MFMG and MFTG, we first show how to reconstruct continuously a joint probability measure over the product space $S \times A^1 \times \cdots A^m$, given a probability measure $\mu$ on $S$ and $m$ compatible probability measures $\hat a^1, \dots, \hat a^m$ on $A^1, \dots, A^m$.

\subsubsection{Law reconstruction}

Given a state $\mu \in \bar S$ and a profile of actions $\hatua = (\hat a^1, \dots, \hat a^m) \in \hatuU(\mu)$, we aim to reconstruct a unique joint law of family of state-actions $\bar a \in \bar A := \cP(S \times \uA)$. This is done via the mapping
$
	\Xi : \Sigma  \to \bar A,
$
defined as follows: for each $\hatua \in \hatuU(\mu)$, the measure $\bar a = \Xi^\mu[\hatua]$ 
is the unique element in $\bar A$ satisfying the following properties:

\begin{enumerate}[label=\roman*)]
    \item \label{item:marginal} For each $i \in [m]$, the marginal of $\bar a$ on $S \times A^i$ coincides with $\hat a^i$, i.e.,
    \begin{equation}
        \label{eq:Xi-prxai}
        \pr_{(s,a^i)}(\bar a) = \hat a^i;
    \end{equation}
    
    \item \label{item:kernel} The stochastic kernel $\bar a(d\ua \mid s)$ on $\uA$ given $S$ stemming from the disintegration Lemma~\ref{le:Disintegration} admits the product structure:
    \begin{equation}
        \label{eq:Xi-kernel}
        \bar a(d\ua \mid s) = \bigotimes_{i=1}^m \hat a^i(da^i \mid s), \quad \text{for } \mu\text{-a.e.} \, s.
    \end{equation}
\end{enumerate}

\begin{lemma}[Reconstruction of the joint state-action law]
    \label{le:Xi-well-defined}
    Suppose that $S$ is at most countable. The mapping $\Xi:\Sigma  \to \bar A$ is well defined and $\Xi \in \mathrm{Car}(\Sigma , \bar A)$. $\Xi$ admits a Carathéodory extension over $\bar S\times \hatuA$.
\end{lemma}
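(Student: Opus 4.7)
The plan is to tackle three separate assertions: well-definedness of $\Xi$ on $\Sigma$, the Carathéodory property on $\Sigma$, and the existence of a Carathéodory extension to $\bar S \times \hatuA$. For the first, I would fix $(\mu, \hatua) \in \Sigma$, use that $\pr_s(\hat a^i) = \mu$ for each $i$ and that $S$ is discrete to read off the conditional $\hat a^i(\cdot \mid s) := \hat a^i(\{s\} \times \cdot)/\mu(\{s\})$ as an element of $\cP(A^i)$ for every $s$ with $\mu(\{s\}) > 0$, and define
$$\Xi(\mu, \hatua) := \sum_{s \in S,\, \mu(\{s\}) > 0} \mu(\{s\}) \, \delta_s \otimes \bigotimes_{i=1}^m \hat a^i(\cdot \mid s).$$
Properties (i) and (ii) then follow by direct computation, while uniqueness comes from the Disintegration Lemma~\ref{le:Disintegration}, which forces any $\bar a'$ satisfying (ii) to be determined by its $S$-marginal and the product kernel, and (i) pins that marginal to $\mu$.

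For the Carathéodory property on $\Sigma$, the key observation is that $S$ carries the discrete topology, so each singleton indicator $\mathds{1}_{\{s\}}$ is continuous on $S$. For fixed $\mu$ and a sequence $\hatua_n \to \hatua$ in $\hatuU(\mu)$, testing weak convergence against functions of the form $(s', a^i) \mapsto \mathds{1}_{\{s'=s\}} h(a^i)$ with $h \in C_b(A^i)$ yields weak convergence $\hat a_n^i(\{s\} \times \cdot) \to \hat a^i(\{s\} \times \cdot)$ as positive measures on $A^i$; dividing by the constant $\mu(\{s\}) > 0$ gives weak convergence of the conditionals, and since weak convergence is preserved under finite products, $\bigotimes_i \hat a_n^i(\cdot \mid s) \to \bigotimes_i \hat a^i(\cdot \mid s)$. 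Dominated convergence for the countable sum (summands bounded by $\mu(\{s\}) \|g\|_\infty$) then gives $\int g \, d\Xi(\mu, \hatua_n) \to \int g \, d\Xi(\mu, \hatua)$ for each $g \in C_b(S \times \uA)$. Joint measurability follows from the explicit evaluation
$$\Xi(\mu, \hatua)(\{s\} \times B^1 \times \cdots \times B^m) = \mathds{1}_{\{\mu(\{s\}) > 0\}} \, \mu(\{s\})^{1-m} \prod_{i=1}^m \hat a^i(\{s\} \times B^i),$$
whose right-hand side is measurable in $(\mu, \hatua)$ by continuity of $\mu \mapsto \mu(\{s\})$ and Borel measurability of $\hat a^i \mapsto \hat a^i(\{s\} \times B^i)$; a monotone class argument extends measurability to arbitrary Borel sets in $S \times \uA$.

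To build the Carathéodory extension over $\bar S \times \hatuA$, I would use the abstract observation that for each fixed $\mu$ the set $\hatuU(\mu)$ is a closed, convex subset of the metrizable space $\hatuA$, that $\Xi(\mu, \cdot)$ is continuous there, and that the target $\bar A = \cP(S \times \uA)$ is a convex subset of the locally convex space of finite signed measures on $S \times \uA$. Dugundji's extension theorem then provides, slice by slice, a continuous extension $\tilde\Xi(\mu, \cdot): \hatuA \to \bar A$ whose image lies in the convex hull of $\Xi(\mu, \hatuU(\mu)) \subset \bar A$. To promote this into a single jointly measurable map, one can either invoke a measurable version of the Dugundji or Michael selection principle for Polish-parametrized families, or regularize the naive formula explicitly by replacing each conditional with $\hat\nu^i_s(\mu, \hat a^i) \in \cP(A^i)$, equal to $\hat a^i(\{s\} \times \cdot)/\pr_s(\hat a^i)(\{s\})$ when that marginal mass is positive and smoothed otherwise by a convex blending with a fixed reference measure $\nu_0^i$, using a continuous weight that vanishes exactly on $\hatuU(\mu)$.

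The main obstacle is this last step. The naive extension obtained by substituting $\pr_s(\hat a^i)(\{s\})$ for $\mu(\{s\})$ in the denominator fails to be continuous at any $\hatua$ with $\pr_s(\hat a^i)(\{s\}) = 0$ for some $s$ with $\mu(\{s\}) > 0$, since the conditional is then genuinely indeterminate in the limit. Any viable extension must simultaneously preserve agreement with $\Xi$ on $\Sigma$, depend measurably on $\mu$, and produce a continuous slice in $\hatua$ on the full space $\hatuA$; it is this three-way balance that forces either the abstract Dugundji argument or a carefully engineered convex blending. All the other steps reduce, thanks to the countable discreteness of $S$, to elementary manipulations of sums and disintegrations.
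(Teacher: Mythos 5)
Your treatment of the first two assertions matches the paper's. Well-definedness is obtained exactly as in the paper (uniqueness from the disintegration, existence from the explicit countable-sum formula), and your continuity argument for $\hatua\mapsto\Xi^\mu[\hatua]$ on $\hatuU(\mu)$ — testing against singleton indicators to get pointwise weak convergence of the conditionals $\hat a^i_n(\cdot\mid s)\to\hat a^i(\cdot\mid s)$, taking finite products, and summing over the countable $S$ by dominated convergence — is the same mechanism the paper uses. Your joint-measurability argument via the explicit evaluation on rectangles plus a monotone class step is in fact more careful than what the paper writes down.

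The gap is in the third assertion. You correctly identify the obstacle (the naive formula with $\pr_s(\hat a^i)(\{s\})$ in the denominator is discontinuous where that mass vanishes while $\mu(\{s\})>0$), but you do not complete either of your proposed fixes: the ``measurable Dugundji/Michael selection'' route is only invoked, not executed, and the convex-blending construction is left at the level of a recipe whose continuity and agreement with $\Xi$ on $\Sigma$ would still have to be checked. The paper's resolution is much more direct and is the idea you are missing: since each $\fX^i\times\fA^i$ is compact, the weak topology on $\hat A^i$ is metrized by the Wasserstein-$1$ distance, and $\hatuU(\mu)$ is a nonempty compact convex subset of $\hatuA$; one then sets
\begin{equation*}
\tilde\Xi^\mu[\hatua]:=\Xi^\mu\bigl[\mathrm{proj}_{\hatuU(\mu)}(\hatua)\bigr],
\end{equation*}
where $\mathrm{proj}_{\hatuU(\mu)}$ is the metric projection (applied coordinatewise onto each $\hat U^i(\mu)$). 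Because the correspondence $\hat U^i$ is continuous with compact convex values, $(\mu,\hatua)\mapsto\mathrm{proj}_{\hatuU(\mu)}(\hatua)$ is Carathéodory, so the composition is continuous in $\hatua$ for each fixed $\mu$, measurable, and restricts to $\Xi$ on $\Sigma$ since the projection is the identity there. Note also that the statement only asks for a \emph{Carathéodory} extension — measurable jointly, continuous in $\hatua$ slice by slice — not a jointly continuous one, which defuses the ``three-way balance'' you describe: once the projection is available, no selection theorem or blending is needed.
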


The proof is provided in Appendix~\ref{app:Xi-well-defined}.
This reconstruction is not trivial. To obtain the measurability of $\Xi$, we suppose $S$ at most countable. Also, the countability of $S$ seems crucial to obtain a pointwise convergence of disintegration kernels of the level-1 actions. 
Note that the space of action profiles is $\hatuA$. Although the level-1 game policies will be defined to take values in $\hatuU(\mu)$, $\PP$-almost surely for any $\mu \in \bar S$, we need (due to measurability considerations) to extend the function $\Xi$ to the entire space $\bar S \times \hatuA$. To do so, we use the finiteness of $S$. 

For simplicity, we will not distinguish between $\Xi$ and the extension defined in the proof of Lemma~\ref{le:Xi-well-defined}, as the values of $\Xi$ outside of $\Sigma $ have no impact on the analysis.  

From a given MFTG, we can define a natural MFMG, where the shared state corresponds to the mean field of the MFTG. For measurability considerations, we suppose $\ufX$ is at most countable.

\begin{assumption}[At most countable]
    \label{assumption:at-most-countable}
    For each $i \in [m]$, the space $\fX^i$ is at most countable and compact. 
\end{assumption}

\begin{definition}%
	\label{def:MFTMDP_problem}
    The tuple 
    $
    (m, S, A^1, \dots, A^m, \hat U^1, \dots, \hat U^m, P, \hat f^1,\dots, \hat f^m, \gamma)
    $
    is said to be the \defi{MFMG lifted from the MFTG model} $
	(m, \ufX, \ufA, \uE, \uE^{0}, \uF, f^1, \dots, f^m, \gamma)
    	$
    of Definition~\ref{def:MFTG_model} if it satisfies: 
    \begin{itemize}
        \item $S = \ufX$, and therefore we have $\bar S = \cP(\ufX)$,
        \item For each $i \in [m]$, $A^i = \fA^i$, and therefore $\hat A^i = \cP(\ufX \times \fA^i)$. We also have $\bar A = \cP(\ufX \times \ufA)$.
        \item The transition kernel $P$ is given by
        \begin{equation}
        \label{eq:MFMG_transition_kernel_from_system_func}
            P( \mu, \hatua)(d \mu') = \big( \unu^0 \circ \bar F( \mu, \hatua, \cdot)^{-1})(d \mu'), \qquad (\mu, \hatua) \in \Sigma ,
        \end{equation} 
        where 
        \begin{equation}
        \label{eq:MFMGM_Fbar_pushforward}
            {\bar F}(\mu, \hatua ,\ue^0)  =   (\Xi^\mu[\hatua] \otimes \unu)\circ \uF(\cdot, \cdot,\Xi^\mu[\hatua], \cdot,\ue^0)^{-1},  \qquad (\mu, \hatua, \ue^0 ) \in \Sigma  \times \uE^0.
        \end{equation}
        \item For each player $i \in [m]$, the cost function $\hat f^i: \Sigma  \to \RR$ satisfies:
        \begin{equation}
            \label{def:bar_f}
            \hat f^i(\mu, \hatua )= \sum_{x^i \in \fX^i}\int_{\fA^i} f^i(x^i, a^i,\Xi^\mu[\hatua]) \pr_{(x^i,a)}(\hat a^i)(\{x^i\},da), \qquad (\mu, \hatua) \in \Sigma .
        \end{equation}
    \end{itemize}
    where for each $i\in [m]$, $\pr_{(x^i,a)}$ is defined naturally on every $\hat A^i$ by 
        \begin{equation*}
        \arraycolsep=1pt\def\arraystretch{1.5}
		\begin{array}{lll}
            \pr_{(x^i,a)} & : \hat A^i& \longrightarrow  \cP(\fX^i \times \fA^i) \\
              & \hat a^i & \longmapsto (\{x^i\},da^i) \mapsto \sum_{x^{-i} \in \fX^{-i}}\int_{\fA^i} \hat a^i(\{\ux\}, da^i))
        \end{array}
        \end{equation*}
\end{definition}

We can check that $\bar F$ is Borel measurable; see e.g. \cite[Proposition~7.29]{bertsekas_1878_book_stocha}. For the rest of the paper, we consider the MFMG lifted from our original MFTG. We will use the notations $\bar S$, $\hat A^i$, $\bar A$, $\hat U^i$, $\hat \Gamma^i$, $\Sigma$ introduced in Definition~\ref{def:MFMG}. For notational convenience, we will write $\prx$ rather than $\pr_s$ when $S = \ufX$.

\section{\textbf{Relations Between the Models}}\,
\label{sec:relations-models}

In this section, we establish the connection between an MFTG and its lifted MFMG. More precisely, we show that for any closed-loop Markov policy of the MFTG, there exists a corresponding Markov policy of the MFMG such that their respective value functions coincide, and vice versa. Consequently, studying the existence of a Nash equilibrium in the MFTG is equivalent to studying its existence in the MFMG.

The next assumptions will be useful for the results of this section, as well as for establishing the main result of this paper. 

\begin{assumption}[Continuity]%
	\label{assumption:continuity}
	\begin{itemize}
		\item \textbf{System function $\uF$:} For each $i \in [m]$,  for $\nu^i\otimes \nu^{0,i} \otimes \nu^{0,0}$-almost every $(e^i, e^{0,i}, e^{0,0}) \in E^i  \times E^{0,i} \times E^{0,0}$, the function $F^i(\cdot, \cdot, \cdot, e^i,e^{0,i}, e^0)$ is continuous in its remaining variables.
		
		\item \textbf{One-stage cost function $\underline{f}$:} There exists a constant  $L_f \in \RR_+$ such that for each $i \in [m]$, $f^i: \fX^i \times \fA^i \times \cP(\ufX \times \ufA) \to \RR$ is $L_f$-Lipshitz. Furthermore, there exists a constant $C_f\in \RR_+^*$ such that $f^1,\dots, f^m$ are uniformly bounded by $C_f$.
	\end{itemize}
\end{assumption}

\begin{definition}\label{def:correspondence_between_policies}
     Let $ i \in [m]$. Let $\ubpi \in \underline{\bPi}^\tinycl$ and $\ \hatubpi \in \hatubPi$. We say that they \defi{correspond to each other} if  for each $\mu\in\bar S$, $n\ge 0$, and $i \in [m]$, $\hat\pi^i_n(\mu)\in\cP(\hat A^i)$ is equal to the push forward of $\PP_{\vartheta^{0,i}}$ by the map:
    $$
        \Theta^{0,i}\ni\theta^{0,i}\mapsto \mu(d\ux)\pi_n^i(x^i,\mu,\theta^{0,i})(d\alpha^i)\in \hat A^i.
    $$
\end{definition}
Note that if $\ubpi$ and $\hatubpi$ correspond to each other, then one is stationary if and only if the other one is.

The next theorem plays a central role in the proof of Theorem~\ref{thm:Main_result}. It implies that, in terms of value functions, studying the MFMG lifted from the MFTG is equivalent to studying the MFTG. This theorem justify the introduction of the notion of MFMG.

\begin{theorem}\label{thm:identical_value_function_MFTG_MFMG}
    Assume~\ref{assumption:at-most-countable}~and~\ref{assumption:continuity} hold. For every $\ubpi \in \underline{\bPi}^{\tinycl}$, there exists  $\hatubpi \in \hatubPi$ such that for each $i \in [m]$, $J^{\cdot, i}(\ubpi) = \hat J^{\cdot, i}(\hatubpi)$ and conversely, for every $\hatubpi \in \hatubPi$, there exists $\ubpi \in \underline{\bPi}^{\tinycl}$ such that this equality holds. The same result holds in the stationary case.
\end{theorem}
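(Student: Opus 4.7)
The plan is to prove the two-way equivalence by exhibiting, for each policy, a matching policy in the other framework and by coupling the two games on one probability space, so that at every time step the conditional joint laws $\PP^0_{(\uX_n,\ualpha_n)}$ (MFTG side) and $\Xi^{\mu_n}[\hatua_n]$ (MFMG side) can be identified, forcing the one-stage costs to agree. The stationary case is then immediate, as both constructions preserve time-invariance.

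For the direction MFTG $\Rightarrow$ MFMG, given $\ubpi \in \underline{\bPi}^{\tinycl}$ I define $\hat\pi^i_n(\mu)$ by the push-forward formula of Definition~\ref{def:correspondence_between_policies}. That $\hat\pi^i_n(\mu)$ concentrates on $\hat U^i(\mu)$ is immediate since the $\ufX$-marginal of the image measure is $\mu$, and Borel measurability of $\mu \mapsto \hat\pi^i_n(\mu)$ follows from the countability of $\ufX$ (Assumption~\ref{assumption:at-most-countable}) and the Borel measurability of $\pi^i_n$. On the MFTG probability space I then set $\mu_n := \PP^0_{\uX_n^{\ubalpha,\mu_0}}$ and $\hat a^i_n := \mu_n(d\ux)\pi^i_n(x^i,\mu_n,\vartheta^{0,i}_n)(d\alpha^i)$. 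By~\eqref{eq:definition_action_Markov_closed_loop_full} and characterisation~(ii) of $\Xi$ given in Lemma~\ref{le:Xi-well-defined}, $\Xi^{\mu_n}[\hatua_n] = \PP^0_{(\uX_n^{\ubalpha,\mu_0},\ualpha_n)}$ holds $\PP$-a.s. Combined with~\eqref{eq:MFGT-system_dynamics_level_0_full} and the pushforward formulas~\eqref{eq:MFMG_transition_kernel_from_system_func}--\eqref{eq:MFMGM_Fbar_pushforward}, this shows $\cL(\mu_{n+1}\mid \cF^0_n) = P(\mu_n,\hatua_n)$, with $\hat a^i_n \perp \hat a^j_n$ conditionally on $\mu_n$ for $i\ne j$; hence $(\bmu,\hatuba)$ is generated by $(\mu_0,\hatubpi)$. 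A direct expansion of~\eqref{def:bar_f} then yields $\hat f^i(\mu_n,\hatua_n) = \EE[f^i(X^i_n,\alpha^i_n,\PP^0_{(\uX_n,\ualpha_n)})\mid \cF^0_n]$, so $J^{\cdot,i}(\ubpi) = \hat J^{\cdot,i}(\hatubpi)$ after summing and taking expectations.

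For the direction MFMG $\Rightarrow$ MFTG, given $\hatubpi$, I apply Lemma~\ref{le:BlackwellDubins} to $\hat A^i$ and use the uniformly distributed variable $h^{\Theta^{0,i}}(\vartheta^{0,i})$ to obtain a jointly Borel map $\hat a^i_n(\mu,\theta^{0,i}) := \rho_{\hat A^i}(\hat\pi^i_n(\mu), h^{\Theta^{0,i}}(\theta^{0,i}))$ whose law is $\hat\pi^i_n(\mu)$. Since $\fX^i$ is countable, disintegrating $\hat a^i_n(\mu,\theta^{0,i})$ with respect to its $\fX^i$-marginal is an explicit Borel operation, and I set $\pi^i_n(x^i,\mu,\theta^{0,i})(d\alpha^i) := \hat a^i_n(\mu,\theta^{0,i})(d\alpha^i \mid x^i)$, with an arbitrary continuous default where the marginal vanishes. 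Coupling the MFTG driven by this $\ubpi$ with the MFMG driven by $\hatubpi$ on the same probability space, reusing $\uvarepsilon^0$ and $\underline{\vartheta}^{0}$, I then propagate the identifications $\mu_n = \PP^0_{\uX_n^{\ubalpha,\mu_0}}$ and $\Xi^{\mu_n}[\hatua_n] = \PP^0_{(\uX_n,\ualpha_n)}$ by induction on $n$.

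The main obstacle lies in this second induction. A priori, an MFMG action $\hat a^i_n$ may carry a kernel $\hat a^i_n(d\alpha^i\mid \ux)$ that depends genuinely on $x^{-i}$, whereas my constructed MFTG kernel only depends on $x^i$; the identification $\Xi^{\mu_n}[\hatua_n]=\PP^0_{(\uX_n,\ualpha_n)}$ is therefore not automatic. The resolution should exploit the conditional-independence structure~\eqref{eq:level-1_gen_ind} required of MFMG actions together with the fact that the cost~\eqref{def:bar_f} and the push-forward transition~\eqref{eq:MFMG_transition_kernel_from_system_func} see $\hat a^i_n$ only through its $\fX^i\times\fA^i$-marginal and through the reconstruction $\Xi^{\mu_n}[\hatua_n]$: one should be able to replace each $\hat a^i_n$ by its product-kernel surrogate $\mu_n(d\ux)\,\hat a^i_n(d\alpha^i\mid x^i)$ without affecting the value, which then matches the product structure produced by the MFTG sampling scheme~\eqref{eq:definition_action_Markov_closed_loop_full}.
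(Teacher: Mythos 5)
Your route is the same as the paper's in both directions: the forward direction is the policy correspondence of Definition~\ref{def:correspondence_between_policies} combined with the identification $\mu_n=\PP^0_{\uX_n}$, $\hat a^i_n=\PP^0_{(\uX_n,\alpha^i_n)}$ and the reconstruction/dynamics lemmas (Lemmas~\ref{le:reconstruction_joint_law} and~\ref{le:dynamics_of_conditional_distribution}); the converse direction is the same Blackwell--Dubins sampling followed by disintegration of the $(x^i,a^i)$-marginal along the $x^i$-marginal, which is exactly the paper's construction~\eqref{eq:construction_closed-loop_Markov_policy_MFC} via a universal disintegration kernel. The forward direction of your proposal is complete.

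The converse direction, however, has a genuine gap --- precisely the obstacle you flag yourself --- and your sketched resolution does not close it. The reconstruction $\Xi^{\mu}[\hatua]$ is \emph{not} a function of the marginals $\prxia(\hat a^i)$ alone: by~\eqref{eq:Xi-kernel} it is assembled from the disintegrations $\hat a^i(da^i\mid\ux)$ with respect to the \emph{full} state profile $\ux$. Consequently, replacing each $\hat a^i$ by its product-kernel surrogate $\mu(d\ux)\,\hat a^i(da^i\mid x^i)$ changes $\Xi^\mu[\hatua]$, and therefore changes both the one-stage cost $\hat f^i$ (through the mean-field argument of $f^i$ in~\eqref{def:bar_f}) and the transition $P$ (through both occurrences of $\Xi^\mu[\hatua]$ in~\eqref{eq:MFMGM_Fbar_pushforward}). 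Your justification that ``the cost and the push-forward transition see $\hat a^i_n$ only through its $\fX^i\times\fA^i$-marginal and through the reconstruction'' is therefore circular: the reconstruction is exactly the object that is sensitive to the replacement. What must actually be established at this point is the identity $\PP^0_{\uX_n}\measprod\upi_n(\cdot,\PP^0_{\uX_n},\uvartheta^0_n)=\Xi^{\PP^0_{\uX_n}}[\hatua_n]$ for the sampled actions $\hat a^i_n=\rho_{\hat A^i}\bigl(\hat\pi^i_n(\PP^0_{\uX_n}),h^{\Theta^{0,i}}(\vartheta^{0,i}_n)\bigr)$ --- this is the step the paper performs in the third equality of its Step~2 --- and that identity requires the sampled $\hat a^i_n$ to satisfy the conditional independence $a^i\perp x^{-i}\mid x^i$, which an arbitrary element of $\hat U^i(\mu)$ need not satisfy. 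Since your argument stops at ``one should be able to,'' the converse implication is not proved.
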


Three technical lemmas are established before giving the proof of Theorem~\ref{thm:identical_value_function_MFTG_MFMG}.  
Lemma~\ref{le:reconstruction_joint_law} shows how the joint law of the family of state-action pairs, $\PP^0_{(\uX_n, \ualpha_n)}$, can be reconstructed from the marginals $(\PP^0_{(\uX_n, \alpha^i_n)})_{1 \leq i \leq m}$. More importantly, these conditional laws $(\PP^0_{(\uX_n, \alpha^i)})_{1 \leq i \leq m}$ are independent when conditioned on the mean field $\PP^0_{\uX_n}$. Thus, once the mean field $\PP^0_{\uX_n}$, which serves as the level-1 state, is fixed, the central player of each team $i$ selects its control independently of the others. This provides the first indication that central players can indeed be modeled as level-1 players in the MFMG framework. 

\begin{lemma}
    \label{le:reconstruction_joint_law}
     Assume~\ref{assumption:at-most-countable} holds. Let $\ubpi \in \underline{\bPi}^{\tinycl}$ and $\mu_0 \in \bar S$, and consider the pair of profile of states and profile of actions $(\ubX, \ubalpha)$ generated by $(\mu_0,\ubpi)$. Then:
     \begin{align}
        \label{eq:reconstruction_PP^0}
         \PP^0_{(\uX_n, \ualpha_n)} &= \Xi^{\PP^0_{\uX_n}}[(\PP^0_{(\uX_n, \alpha_n^i)})_{1 \leq i \leq m}], \quad \PP\text{-a.s.}, \, \forall n\ge0,
         \\
     \label{eq:indep_PP^0}
         \PP^0_{(\uX_n, \alpha_n^i)} &\perp_{\PP^0_{\uX_n}} \PP^0_{(\uX_n, \alpha_n^j)}, \quad \PP\text{-a.s.}, \,\forall i,j \in [m] \text{ such that } i\ne j, \forall n\ge0,
     \end{align}
     where $\Xi$ is the reconstruction function defined by~\eqref{eq:Xi-prxai} and~\eqref{eq:Xi-kernel} and where $\PP^0_{(\uX_n, \alpha^i_n)}$ is a regular version of $\cL\bigl( (\uX_n, \alpha^i_n) \mid\cF_n^{0}  \bigr)$. 
\end{lemma}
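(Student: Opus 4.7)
The plan is to verify both claims by direct computation of the conditional laws, using the generation rule~\eqref{eq:definition_action_Markov_closed_loop_full} together with the tower property applied to the inclusion $\cF^0_n \subset \cG^c_n$. The key structural observation is that $\uX_n$ is $\cG^x_n \subset \cG^c_n$--measurable whereas each $\vartheta^{0,i}_n$ is already $\cF^0_n$--measurable; this will let us extract explicit disintegration kernels and then identify the right-hand side of~\eqref{eq:reconstruction_PP^0} through the uniqueness part of Lemma~\ref{le:Xi-well-defined}. Assumption~\ref{assumption:at-most-countable} enters solely through that lemma, guaranteeing that $\Xi^{\PP^0_{\uX_n}}$ is well defined so that~\eqref{eq:reconstruction_PP^0} is a meaningful identity.

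For~\eqref{eq:reconstruction_PP^0}, I would write, for any Borel set $B \subset \ufX \times \ufA$,
\[
\PP\bigl((\uX_n,\ualpha_n)\in B\,\big|\,\cF^0_n\bigr) \;=\; \EE\!\left[\EE\!\left[\indic_{(\uX_n,\ualpha_n)\in B}\,\big|\,\cG^c_n\right]\,\Big|\,\cF^0_n\right],
\]
and use the $\cG^c_n$--measurability of $\uX_n$ together with~\eqref{eq:definition_action_Markov_closed_loop_full} to reduce the inner conditional expectation to the integral of $\indic_B(\uX_n,\cdot)$ against $\prod_{i}\pi^i_n(X^i_n,\PP^0_{\uX_n},\vartheta^{0,i}_n)$. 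Since the tuple $(\PP^0_{\uX_n},\vartheta^{0,1}_n,\dots,\vartheta^{0,m}_n)$ is $\cF^0_n$--measurable, taking the outer expectation amounts to integrating out $\uX_n$ against its own conditional law $\PP^0_{\uX_n}$, yielding the explicit formula
\[
\PP^0_{(\uX_n,\ualpha_n)}(d\ux,d\ua) \;=\; \PP^0_{\uX_n}(d\ux)\prod_{i=1}^m \pi^i_n\bigl(x^i,\PP^0_{\uX_n},\vartheta^{0,i}_n\bigr)(da^i),\qquad \PP\text{--a.s.}
\]
The same computation applied to each coordinate separately gives $\PP^0_{(\uX_n,\alpha^i_n)}(d\ux,da^i) = \PP^0_{\uX_n}(d\ux)\,\pi^i_n(x^i,\PP^0_{\uX_n},\vartheta^{0,i}_n)(da^i)$. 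Reading off the two displays then verifies both the marginal condition $\pr_{(s,a^i)}\PP^0_{(\uX_n,\ualpha_n)}=\PP^0_{(\uX_n,\alpha^i_n)}$ required by~\eqref{eq:Xi-prxai}, and the product disintegration $\PP^0_{(\uX_n,\ualpha_n)}(d\ua\,|\,\ux)=\prod_{i}\PP^0_{(\uX_n,\alpha^i_n)}(da^i\,|\,\ux)$ required by~\eqref{eq:Xi-kernel}, and the uniqueness in Lemma~\ref{le:Xi-well-defined} forces~\eqref{eq:reconstruction_PP^0}.

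For~\eqref{eq:indep_PP^0}, the same explicit formula shows that each $\PP^0_{(\uX_n,\alpha^i_n)}$ is a deterministic measurable function of the pair $(\PP^0_{\uX_n},\vartheta^{0,i}_n)$. By~\eqref{fo:P0X_n}, $\PP^0_{\uX_n}$ is measurable with respect to $\sigma\bigl(\uvarepsilon^0_k,\uvartheta^0_{k-1}:1\le k\le n\bigr)$, which by the probabilistic setup of~\S\ref{subsec:proba_set_up} is independent of the jointly independent tuple $(\vartheta^{0,1}_n,\dots,\vartheta^{0,m}_n)$. A standard conditional-independence argument then gives that the $\vartheta^{0,i}_n$ remain mutually independent conditionally on $\PP^0_{\uX_n}$, from which~\eqref{eq:indep_PP^0} follows for their deterministic images $\PP^0_{(\uX_n,\alpha^i_n)}$. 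The only real technical point in the whole argument is the careful bookkeeping of which $\sigma$-algebra each object is measurable with respect to; no deeper obstacle is anticipated.
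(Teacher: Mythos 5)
Your proposal is correct and follows essentially the same route as the paper's proof: both arguments rest on the product form of $\cL(\ualpha_n \mid \cG^c_n)$ from~\eqref{eq:definition_action_Markov_closed_loop_full}, a disintegration/tower-property computation yielding $\PP^0_{(\uX_n,\ualpha_n)} = \PP^0_{\uX_n} \measprod \prod_i \pi^i_n(\cdot,\PP^0_{\uX_n},\vartheta^{0,i}_n)$, the uniqueness part of the definition of $\Xi$, and the mutual independence of the $\vartheta^{0,i}_n$ (independent of $\PP^0_{\uX_n}$ via~\eqref{fo:P0X_n}) for the conditional independence claim. The only cosmetic difference is that the paper first establishes $\alpha^i_n \perp_{\cG^{\tinycl}_n} \alpha^j_n$ from a measurability argument and then disintegrates, whereas you derive the explicit kernel directly by the tower property; these are interchangeable.
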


The proof is provided in Appendix~\ref{app:identify_conditional_joint_dist_with_kernels}.
Next, Lemma~\ref{le:dynamics_of_conditional_distribution} shows how the evolution of the mean-field process $(\PP^0_{\uX_n})_{n\in \NN}$ together with the processes of joint probability measures $(\PP^0_{\uX_n,\alpha^i})_{\substack{i\in [m] \\ n\in \NN}}$ can be interpreted as the state process and the action processes of the lifted MFMG, when $\ubalpha$ is a profile of level-0 action processes and $\ubX$ is the corresponding profile of state processes associated with $(\ubalpha, \mu_0)$ (see Definition~\ref{def:MFTG-state_process_from_control_process}).

\begin{lemma}
\label{le:dynamics_of_conditional_distribution}
Assume~\ref{assumption:at-most-countable}~and~ \ref{assumption:continuity} hold. Let $\ubalpha \in \underline{\AA}$, $\mu_0 \in \cP(\ufX)$, and let $\ubX$ be the associated state process. Then:
\begin{equation}
     \label{eq:formula_dynamics_in_lemma_MFTG_MFMG}
        \PP^0_{\uX_{n+1}} = \bar F( \PP^0_{\uX_{n}},(\PP^0_{(\uX_n, \alpha_n^i)})_{1 \leq i \leq m}, \uvarepsilon^0_{n+1}), \qquad \PP-a.s. \qquad n \geq 0.
    \end{equation}
    So $\cL(\PP^0_{\uX_{n+1}}) = P(\PP^0_{\uX_{n+1}},(\PP^0_{(\uX_n, \alpha_n^i)})_{1 \leq i \leq m}), \, \forall n \ge 0$ , where the transition kernel P was defined in~\eqref{eq:MFMG_transition_kernel_from_system_func}.
\end{lemma}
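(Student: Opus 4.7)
The plan is to compute $\cL(\uX_{n+1} \mid \cF^0_{n+1})$ directly from the system dynamics and then identify the result with the right-hand side of~\eqref{eq:formula_dynamics_in_lemma_MFTG_MFMG} via the reconstruction map $\Xi$. Starting from~\eqref{eq:MFGT-system_dynamics_level_0_full}, I would write
\begin{equation*}
\uX_{n+1} = \uF\bigl(\uX_n, \ualpha_n, \PP^0_{(\uX_n, \ualpha_n)}, \uvarepsilon_{n+1}, \uvarepsilon^0_{n+1}\bigr)
\end{equation*}
and examine each ingredient relative to $\cF^0_{n+1} = \cF^{\uvarepsilon^0}_{n+1} \vee \cF^{\uTheta^0}_{n+1}$. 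The random measure $\PP^0_{(\uX_n, \ualpha_n)}$ is $\cF^0_n$-measurable, hence $\cF^0_{n+1}$-measurable, and $\uvarepsilon^0_{n+1}$ is $\cF^0_{n+1}$-measurable; both therefore act as fixed parameters once we condition.

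The key independence check is that $(\uX_n, \ualpha_n)$ is measurable with respect to a $\sigma$-field generated by noise and randomization variables indexed up to time $n$, while the fresh increments $(\uvarepsilon^0_{n+1}, \uvartheta^0_{n+1})$ that take $\cF^0_n$ into $\cF^0_{n+1}$ are, by the blanket independence specified in \S~\ref{subsec:proba_set_up}, independent of all past variables and of $\uvarepsilon_{n+1}$. This yields $\cL\bigl((\uX_n, \ualpha_n) \mid \cF^0_{n+1}\bigr) = \PP^0_{(\uX_n, \ualpha_n)}$, $\cL(\uvarepsilon_{n+1} \mid \cF^0_{n+1}) = \unu$, and conditional independence of $(\uX_n, \ualpha_n)$ and $\uvarepsilon_{n+1}$ given $\cF^0_{n+1}$. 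Pushing forward their joint conditional law by the map $(\ux, \ua, \ue) \mapsto \uF(\ux, \ua, \PP^0_{(\uX_n, \ualpha_n)}, \ue, \uvarepsilon^0_{n+1})$, I would obtain
\begin{equation*}
\cL(\uX_{n+1} \mid \cF^0_{n+1}) = \bigl(\PP^0_{(\uX_n, \ualpha_n)} \otimes \unu\bigr) \circ \uF\bigl(\cdot,\cdot, \PP^0_{(\uX_n, \ualpha_n)}, \cdot, \uvarepsilon^0_{n+1}\bigr)^{-1}, \quad \PP\text{-a.s.}
\end{equation*}

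Substituting $\PP^0_{(\uX_n, \ualpha_n)} = \Xi^{\PP^0_{\uX_n}}\bigl[(\PP^0_{(\uX_n, \alpha^i_n)})_{1 \le i \le m}\bigr]$ from Lemma~\ref{le:reconstruction_joint_law} (where Assumption~\ref{assumption:at-most-countable} is essential, and Assumption~\ref{assumption:continuity} is used to guarantee the measurability/regularity of $\bar F$ as a Carathéodory-type pushforward) and reading off~\eqref{eq:MFMGM_Fbar_pushforward} identifies the right-hand side with $\bar F(\PP^0_{\uX_n}, (\PP^0_{(\uX_n, \alpha^i_n)})_{1 \le i \le m}, \uvarepsilon^0_{n+1})$. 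Since $\PP^0_{\uX_{n+1}}$ is by definition a regular version of $\cL(\uX_{n+1} \mid \cF^0_{n+1})$, this gives~\eqref{eq:formula_dynamics_in_lemma_MFTG_MFMG}. For the second assertion I would condition this identity on $\cF^0_n$: the arguments $\PP^0_{\uX_n}$ and $(\PP^0_{(\uX_n, \alpha^i_n)})_{1 \le i \le m}$ are $\cF^0_n$-measurable while $\uvarepsilon^0_{n+1} \sim \unu^0$ is independent of $\cF^0_n$, so the conditional distribution of $\PP^0_{\uX_{n+1}}$ equals $\unu^0 \circ \bar F(\PP^0_{\uX_n}, (\PP^0_{(\uX_n, \alpha^i_n)})_{1 \le i \le m}, \cdot)^{-1}$, which is precisely $P(\PP^0_{\uX_n}, (\PP^0_{(\uX_n, \alpha^i_n)})_{1 \le i \le m})$ by~\eqref{eq:MFMG_transition_kernel_from_system_func}.

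The main obstacle I anticipate is the careful bookkeeping of the filtrations: checking that enlarging the conditioning $\sigma$-field from $\cF^0_n$ to $\cF^0_{n+1}$ does not alter the conditional law of $(\uX_n, \ualpha_n)$, and that $\uvarepsilon_{n+1}$ remains both independent of $\cF^0_{n+1}$ and conditionally independent of $(\uX_n, \ualpha_n)$ under that refined $\sigma$-field. All three facts follow from the construction of \S~\ref{subsec:proba_set_up}, but they require explicit identification of which noise and randomization increments sit in which filtration. Once these independence properties are in place, the remainder is a direct unpacking of the definitions of $\Xi$, $\bar F$, and $P$.
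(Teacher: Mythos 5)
Your proposal is correct and follows essentially the same route as the paper: the paper's Appendix proof carries out exactly the computation you describe, just written as a test-function identity against $\psi(\uvarepsilon^0_{n+1})\,h\bigl((\uvartheta^0_k)_{k\le n},(\uvarepsilon^0_k)_{k\le n}\bigr)$ (a generating class for the relevant conditioning $\sigma$-field, via~\eqref{fo:P0X_n}), integrating out $\uvarepsilon_{n+1}$ against $\unu$ and $(\uX_n,\ualpha_n)$ against $\PP^0_{(\uX_n,\ualpha_n)}$, and then invoking Lemma~\ref{le:reconstruction_joint_law} together with~\eqref{eq:MFMGM_Fbar_pushforward} to identify the resulting pushforward with $\bar F$. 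The independence bookkeeping you flag is precisely what the paper's conditioning step encodes, so there is no gap.
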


The proof is provided in Appendix~\ref{sec:add-details}. 
The last lemma, Lemma~\ref{le:identify_conditional_joint_dist_with_kernels}, establishes the equality in law between level-1 state and reconstructed state-action law  processes generated by some $(\mu_0, \hatubpi)$ (see Definition~\ref{def:MFMG_admissible_state_action_processes}  and Equations~\eqref{eq:Xi-prxai},~\eqref{eq:Xi-kernel}), and the level-1 state and state-action processes $\big((\PP^0_{\uX_n})_{n\in \NN},(\PP^0_{\uX_n,\ualpha_n})_{ n\in \NN}\big)$, when $(\ubX, \ubalpha)$ is generated by some $(\mu_0, \ubpi)$ (see Definition~\ref{def:admissible_state_action_processes}), provided that $(\hatubpi, \ubpi)$ correspond to each other.
\begin{lemma}\,
    \label{le:identify_conditional_joint_dist_with_kernels}
     Assume~\ref{assumption:at-most-countable}~and~\ref{assumption:continuity} hold. Let $\ubalpha \in \underline{\AA}$, $\mu_0 \in \cP(\ufX)$, and let $\ubX$ be the associated state process. For every $n \ge 0$, let $\kappa_n: \bar S \to \cP(\bar A)$ be the Borel measurable disintegration kernel of $\cL( \PP^0_{\uX_n}, \PP^0_{(\uX_n, \ualpha_n)} )$ along its first marginal. 
    Then, if $(\bzeta, \bm{\hatueta} )$ is an $(\bar S \times \hatuA)$-valued pair of stochastic processes  which are $\FF^0$-adapted, and satisfy: 
        	    $\zeta_0 = \mu_0,$ $\PP-a.s.,$ 
        	    $\zeta_{n+1} = \bar F( \zeta_n, \hatueta_n, \uvarepsilon_{n+1}^0)$, $\PP-a.s.$,  $n \ge 0$,
        	and if
        	    $\cL( \Xi^{\zeta_n}[\hatueta_n]  | \zeta_n ) = \kappa_n( \zeta_n ),$ $\PP-a.s.$ $n \ge 0$,
    we have: 
        \begin{equation}
            \label{eq:identify_conditional_joint_distribution}
    \cL (\zeta_n, \Xi^{\zeta_n}[\hatueta_n]) =\cL \big(\PP^0_{\uX_n}, \PP^0_{(\uX_n, \ualpha_n)} \big), \qquad n \geq 0.
        \end{equation}
\end{lemma}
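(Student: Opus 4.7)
The proof goes by induction on $n$. The pivot is the observation that the lifted dynamics in~\eqref{eq:MFMGM_Fbar_pushforward} depends on $(\mu,\hatua)$ only through the reconstructed joint measure $\Xi^\mu[\hatua]$: writing $G(\bar a,\ue^0):=(\bar a\otimes\unu)\circ\uF(\cdot,\cdot,\bar a,\cdot,\ue^0)^{-1}$, we have $\bar F(\mu,\hatua,\ue^0)=G(\Xi^\mu[\hatua],\ue^0)$. Hypothesis (ii) thus rewrites as $\zeta_{n+1}=G(\Xi^{\zeta_n}[\hatueta_n],\uvarepsilon^0_{n+1})$, while Lemma~\ref{le:reconstruction_joint_law} combined with Lemma~\ref{le:dynamics_of_conditional_distribution} yields the parallel MFTG identity $\PP^0_{\uX_{n+1}}=G(\PP^0_{(\uX_n,\ualpha_n)},\uvarepsilon^0_{n+1})$. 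Both time-$(n{+}1)$ states are therefore pushforwards by the same Borel map $G$.

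For $n=0$, the deterministic initial distribution gives $\zeta_0=\mu_0=\PP^0_{\uX_0}$ $\PP$-a.s., so $\kappa_0$ is simply the law of $\PP^0_{(\uX_0,\ualpha_0)}$, and condition (iii) delivers~\eqref{eq:identify_conditional_joint_distribution} at time $0$. Assume now it holds at time $n$; its second marginal reads $\cL(\Xi^{\zeta_n}[\hatueta_n])=\cL(\PP^0_{(\uX_n,\ualpha_n)})$. Since $(\bzeta,\bm{\hatueta})$ is $\FF^0$-adapted and $\Xi$ is Borel measurable by Lemma~\ref{le:Xi-well-defined}, $\Xi^{\zeta_n}[\hatueta_n]$ is $\cF^0_n$-measurable, and by construction so is $\PP^0_{(\uX_n,\ualpha_n)}$. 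The i.i.d.\ common noises of Subsection~\ref{subsec:proba_set_up} ensure that $\uvarepsilon^0_{n+1}$ is independent of $\cF^0_n$, so the pairs $(\Xi^{\zeta_n}[\hatueta_n],\uvarepsilon^0_{n+1})$ and $(\PP^0_{(\uX_n,\ualpha_n)},\uvarepsilon^0_{n+1})$ share the same law on $\bar A\times\uE^0$; pushing forward by $G$ gives $\cL(\zeta_{n+1})=\cL(\PP^0_{\uX_{n+1}})$. Applying hypothesis (iii) at time $n+1$, the conditional law of $\Xi^{\zeta_{n+1}}[\hatueta_{n+1}]$ given $\zeta_{n+1}$ equals $\kappa_{n+1}(\zeta_{n+1})$, which by definition of $\kappa_{n+1}$ coincides with the conditional law of $\PP^0_{(\uX_{n+1},\ualpha_{n+1})}$ given $\PP^0_{\uX_{n+1}}$; combined with the just-established marginal equality, this yields~\eqref{eq:identify_conditional_joint_distribution} at time $n+1$ and closes the induction.

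The main delicate points are (a) that $\Xi^{\zeta_n}[\hatueta_n]$ is a genuine $\cF^0_n$-measurable $\bar A$-valued random variable, which relies on the Carathéodory extension of $\Xi$ over the whole of $\bar S\times\hatuA$ furnished by Lemma~\ref{le:Xi-well-defined}, since the lemma does not assume $\hatueta_n\in\hatuU(\zeta_n)$ pointwise; and (b) the bookkeeping that identifies both $\zeta_{n+1}$ and $\PP^0_{\uX_{n+1}}$ as pushforwards under the same $G$, which in particular requires using Lemma~\ref{le:reconstruction_joint_law} to fold the tuple of marginals $(\PP^0_{(\uX_n,\alpha^i_n)})_{i}$ appearing in Lemma~\ref{le:dynamics_of_conditional_distribution} into the full joint law $\PP^0_{(\uX_n,\ualpha_n)}$ via $\Xi$. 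Each of these is routine once correctly set up, so no deeper obstacle is expected.
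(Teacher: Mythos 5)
Your proof is correct and follows essentially the same route as the paper: an induction in which the time-$(n{+}1)$ marginal laws are identified by observing that $\bar F$ depends on $(\mu,\hatua)$ only through $\Xi^\mu[\hatua]$, invoking the induction hypothesis together with Lemmas~\ref{le:reconstruction_joint_law} and~\ref{le:dynamics_of_conditional_distribution}, and then upgrading to the joint law via the shared disintegration kernel $\kappa_{n+1}$. Your version merely makes explicit the independence of $\uvarepsilon^0_{n+1}$ from $\cF^0_n$ and the $\cF^0_n$-measurability of $\Xi^{\zeta_n}[\hatueta_n]$, steps the paper leaves implicit in its chain of equalities.
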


The proof is provided in Appendix~\ref{sec:add-details}. 
We now turn to the proof of Theorem~\ref{thm:identical_value_function_MFTG_MFMG}.

\begin{proof}[Proof of Theorem~\ref{thm:identical_value_function_MFTG_MFMG}]\;
     The proof is divided in two steps.
     
    \textbf{Step 1, finding an appropriate $\hatubpi$ for a given $\ubpi$:} Let $\ubpi \in \underline{\bPi}^\tinycl$. Let $\hatubpi$ corresponding to $\ubpi$ in the sense of Definition~\ref{def:correspondence_between_policies}. 
    Let $i \in [m]$, we now check the equality of the value functions $J^{\cdot, i}(\ubpi)$ and $\hat J^{\cdot, i}(\hatubpi)$. Note that if $\hatubpi$ is stationary, then so is $\ubpi$. Let $(\ubX,\ubalpha)$ be a pair of state and action processes generated by $(\ubpi,\mu_0)$. Then
    	\begin{equation*}
    		\begin{split}
    			J^{\mu_0, i}(\ubpi) 
    			& = \EE \Bigl[ \sum_{n \geq 0} \gamma^n f^i \bigl( X^i_n, \alpha^i_n,\PP^0_{(\uX_n,\ualpha_n)} \bigr) \Bigr]
                    \\
                    &=
                    \sum_{n \geq 0} \gamma^n \EE \Bigl[  \sum_{x^i \in \fX^i}\int_{\fA^i}f^i \bigl( x^i, a^i,\PP^0_{(\uX_n,\ualpha_n)} \bigr) \PP^0_{(X^i,\ualpha^i)}(\{x^i\},da^i) \Bigr]
                    \\
                    &=
                     \sum_{n \geq 0} \gamma^n \EE \Bigl[ \sum_{x^i \in \fX^i}\int_{\fA^i}f^i \bigl( x^i, a^i,\Xi^{\PP^0_{\uX_n}}[(\PP^0_{(\uX_n,\alpha^i_n)})_{1 \leq i \leq m}] \bigr) \PP^0_{(X^i,\ualpha^i)}(\{x^i\},da^i) \Bigr]
                    \\
                    &
    			= \EE \Bigl[ \sum_{n \geq 0} \gamma^n \hat f^i \bigl( \PP^0_{\uX_n},(\PP^0_{(\uX_n, \alpha_n^i)})_{1 \leq i \leq m} \bigr) \Bigr],
    		\end{split}
    	\end{equation*}
    	where we used Lemma~\ref{le:reconstruction_joint_law} in the third line. Examining the last term together with Equation~\eqref{eq:MFMG_valuef}, we would like to set $\mu_n = \PP^0_{\uX_n}$ and $\hatua_n = (\PP^0_{(\uX_n, \alpha^i_n)})_{0 \leq i \leq m}$, and then verify that the pair consisting of the state profile and the action profile, $(\bmu_n, \hatuba)$, is generated by $(\hatubpi,\mu_0)$. This is indeed the case because equation \eqref{eq:level-1_gen_law_mun} is implied by Lemma~\ref{le:dynamics_of_conditional_distribution}, equation \eqref{eq:level-1_gen_lawi} is implied by the definition of $\hatupi_n$, and \eqref{eq:level-1_gen_ind} is implied by Lemma~\ref{le:reconstruction_joint_law}.
        
        Using the Definition~\ref{def:MFMG_VALUE_FUNCTION} of the value function $\hat J^{\cdot,i}$, we have
    	\begin{equation*}
            J^{\mu_0, i}(\ubpi) 
    		= \EE \Bigl[ \sum_{n \geq 0} \gamma^n \hat f^i \bigl( \PP^0_{\uX_n},(\PP^0_{(\uX_n, \alpha_n^i)})_{1 \leq i \leq m} \bigr) \Bigr] = \hat J^{\mu_0, i}(\hatubpi).
    	\end{equation*}
    
    \textbf{Step 2, finding an appropriate $\ubpi$ for a given $\hatubpi$:} Conversely, let $\hatubpi = (\hatupi_n)_{n \geq 0} $ in $\hatubPi$. For every $n \geq 0$ and $i \in [m]$, 
     $\hat \pi_n^i: \bar S \to \cP( \hat A^i)$ is a Borel measurable map such that for every $\mu\in \bar S$ we have  $\hat\pi_n^i(\hat U^i(\mu))=1$. According to the
     universal disintegration Theorem~\cite[Corollary 1.26]{Kallenberg_RM}, there exists for each $i \in [m]$ a Borel measurable probability kernel  $K^i: \fX^i \times \cP(\fX^i \times \fA^i) \times \cP(\fX^i) \to \cP(\fA^i)$ such that for every $\rho^i\in\cP(\fX^i\times \fA^i) $ and $\mu^i \in \cP(\fX^i)$ such that $\text{pr}_{x^i}(\rho^i)=\mu^i$, we have $\rho^i=\mu^i\measprod K^i(\cdot,\rho,\mu)$.
    So for every integer $n\ge 0$, $x^i\in \fX^i$, $\mu\in \bar S$ and $\theta^{0,i}\in\Theta^{0,i}$, we define:
    	\begin{equation}
    		\label{eq:construction_closed-loop_Markov_policy_MFC}
    		\pi_n^i( x^i, \mu, \theta^{0,i}) := K^i\Bigl( x^i, \prxia(\rho_{\hat A^i}\bigl(\hat \pi^i_n (\mu), h^{\Theta^{0,i}}(\theta^{0,i}))\bigr),\prxi(\mu)\Bigr), 	
    	\end{equation}
    where $\rho_{\hat A^i}$ is the Blackwell-Dubins function of $\hat A^i$. Note that if $\hat \bpi^i$ is stationary, then so is $\bpi^i$. 
    Because the functions $K^i$, $h^{\Theta^{0,i}}$, $\rho_{\hat A^i}$ and the projections are Borel measurable, so is the strategy function $\pi_n^i$ for every $n \geq 0$ and $i \in [m]$. By the Definition~\ref{def:Markovian_policy},  $\bpi^i = (\pi_n^i)_{n\ge 0} \in \bPi^{\tinycl, i}$. Recall that the function $ h^{\Theta^{0,i}}$ was introduced in Section~\ref{subsec:proba_set_up}, and that $ h^{\Theta^{0,i}}(\vartheta^{0,i})$ is uniformly distributed on $[0,1]$ by construction. Notice that for every $\mu \in \bar S$ and for almost every $\theta^{0,i} \in [0,1]$, the definition of the universal disintegration kernel $K^i$ implies that:
    	\begin{equation}
    	\label{eq:pi-K-barphi}
        \begin{split}
            &\prxia\Big(\rho_{\hat A^i}\bigl(\hat \pi^i_n (\mu), \theta^{0,i}\bigr) \Big) (\{x^i\}, da) 
            \\ 
            &
    	    = \prxi(\mu)(\{x^i\}) K^i \Big( x^i,\prxia\Big(\rho_{\hat A^i}\bigl(\hat \pi^i_n (\mu), \theta^0\bigr)\Big) , \prxi(\mu) \Big)(d a).
        \end{split}
    	\end{equation}
    Using Equation~\eqref{eq:construction_closed-loop_Markov_policy_MFC}, we have:
    	\begin{equation}
    	\label{eq:mu-pin-rhoAbar}
    		\prxia\Big(\rho_{\hat A^i}\bigl(\hat \pi^i_n (\mu), \theta^{0,i}\bigr) \Big) (\{x^i\}, da)
    		=\prxi(\mu)(\{x^i\})\pi_n^i( x^i, \mu, \theta^{0,i})(da).
    	\end{equation}
    By replacing $\theta^{0,i}$ with $\vartheta_n^{0,i}$, one can use the  Blackwell-Dubins lemma, Lemma~\ref{le:BlackwellDubins}, and obtain that the left hand side of \eqref{eq:mu-pin-rhoAbar} is a random variable with values in $\cP(\fX^i\times \fA^i)$ with distribution $\hat \pi^i_n (\mu) \circ \prxia(\cdot)^{-1}$.

    \vskip 2pt
    Let us verify that $J^{\cdot, i}(\ubpi) = \hat J^{\cdot, i}(\hatubpi)$ for each $i \in [m]$. Let $\mu_0 \in \bar S$. Let $(\bzeta, \bm{\hatueta} )$ be state and action processes generated by $(\mu_0,\hatubpi)$ (see Definition~\ref{def:MFMG_admissible_state_action_processes}). Let $(\ubX, \ubalpha)$ be a pair of profile of states and profile of actions processes generated by $\ubpi$ and $\mu_0$.
    Using the fact that $\PP^0_{(\uX_n,\ualpha_n)}=\PP^0_{\uX_n}\measprod \upi_n(\cdot, \PP^0_{\uX_n},\uvartheta^{0}_n)$, and the fact that $\uvartheta^{0}_n$ is independent of $\PP^0_{\uX_n}$ by Equation~\eqref{fo:P0X_n}, we have:
    	\begin{equation*}
    		\begin{split}
                &J^{\mu_0, i}(\ubpi)
                = 
                \sum_{n\ge 0}\gamma^n\EE\Bigl[\sum_{x^i \in \fX^i}\int_{\fA^i}f(x^i,a^i,\PP^0_{(\uX_n,\ualpha_n)})\PP^0_{(X_n^i,\alpha_n^i)}(\{x^i\},da^i) \Bigr] %
                \\
                &= 
                \sum_{n\ge 0}\gamma^n\EE\Bigl[\sum_{x^i \in \fX^i}\int_{\fA^i}f\bigl(x^i,a^i,\PP^0_{\uX_n}\measprod \upi_n(\cdot,\PP^0_{\uX_n},\uvartheta^{0}_n)\bigr) \times
                \\
                &\qquad\qquad\qquad
                \PP^0_{X_n^i}(\{x^i\})\pi_n^i(x^i,\PP^0_{\uX_n},\vartheta^{0,i}_n)(da^i)\Bigr]
                \\
                &= 
                \sum_{n\ge 0}\gamma^n\EE\Bigl[\int_{\cP(\hat \fA^1) \times \dots \times\cP(\hat \fA^m )}\sum_{x^i \in \fX^i}\int_{\fA^i}f(x^i,a^i,\Xi^{\PP^0_{\uX_n}}[\hatua])\hat a^i(\{x^i\},da^i)\hatupi_n\bigl(\PP^0_{X_n}\bigr)(d\hatua)\Bigr]
                \\
                &=
                \sum_{n\ge 0}\gamma^n\EE\Bigl[\int_{\cP(\hat \fA^1) \times \dots \times\cP(\hat \fA^m )}\hat f^i(\PP^0_{\uX_n},\hatua)\hatupi_n\bigl(\PP^0_{\uX_n}\bigr)(d\hatua)\Bigr].
    		\end{split}
    	\end{equation*}
    On the other hand:
    	\begin{equation*}
    		\begin{split}
    			\hat J^{\mu_0, i}(\hatubpi) 
    			& = \EE \left[ \sum_{n \geq 0} \gamma^n \bar f ( \zeta_n, \hatueta_n ) \right]
    			=  \sum_{n \geq 0} \gamma^n  \EE \left[ \int_{\cP(\hat \fA^1) \times \dots \times\cP(\hat \fA^m )} \hat f^i ( \zeta_n, \hatua ) \cL( \hatueta_n \, | \, \zeta_n)(d \hatua) \right]
                \\
    			& =  \sum_{n \geq 0} \gamma^n  \EE \left[ \int_{\cP(\hat \fA^1) \times \dots \times\cP(\hat \fA^m )} \hat f^i ( \PP^0_{\uX_n}, \hatua ) \hatupi_n(\PP^0_{\uX_n})(d \hatua) \right],
    		\end{split}
    	\end{equation*}
    where the last equality holds by~\eqref{eq:level-1_gen_law} and Lemma~\ref{le:identify_conditional_joint_dist_with_kernels}, because $(\bzeta, \bm{\hatueta} )$ are generated by $(\mu_0, \hatubpi )$. This completes the proof.
\end{proof}

As a direct consequence of Theorem~\ref{thm:identical_value_function_MFTG_MFMG}, we obtain the following.

\begin{corollary}
    \label{corollary:mftg_mfmg_ne_equivalence}
    Assume~\ref{assumption:at-most-countable}~and~\ref{assumption:continuity} hold. The MFTG admits a local Nash equilibrium associated to the initial distribution $\mu \in \bar S$ (resp. a Nash equilibrium in expectation associated to $\eta \in \cP(\bar S)$) if and only if the MFMG lifted from  the MFTG admits a local Nash equilibrium associated to the initial distribution $\mu \in \bar S$ (resp. a Nash equilibrium in expectation associated to $\eta \in \cP(\bar S)$). 
\end{corollary}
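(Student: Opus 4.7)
The plan is to prove both implications by direct transfer of the value-function identities provided by Theorem~\ref{thm:identical_value_function_MFTG_MFMG}, combined with the crucial observation that the correspondence between MFTG closed-loop policies and MFMG policies established by Definition~\ref{def:correspondence_between_policies} and by Equation~\eqref{eq:construction_closed-loop_Markov_policy_MFC} is \emph{componentwise}: the $i$-th MFMG strategy function $\hat\pi^i_n$ is built from $\pi^i_n$ alone (as a pushforward), and conversely $\pi^i_n$ is built from $\hat\pi^i_n$ alone (via Blackwell--Dubins applied to $\hat A^i$). Because of this, replacing a single team's component in one model corresponds to replacing exactly the same team's component in the other.

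For the forward direction (MFTG $\Rightarrow$ MFMG), I would start with a local Nash equilibrium $\ubpi^\ast \in \ubPi^\tinycl$ associated to $\mu_0$ and let $\hatubpi^\ast \in \hatubPi$ be the profile obtained from $\ubpi^\ast$ by Theorem~\ref{thm:identical_value_function_MFTG_MFMG}, so that $J^{\mu_0,j}(\ubpi^\ast)=\hat J^{\mu_0,j}(\hatubpi^\ast)$ for every $j\in[m]$. Suppose towards contradiction that $\hatubpi^\ast$ is not a local Nash equilibrium in the MFMG: some player $i$ admits a profitable deviation $\hat\bpi^i \in \hat\bPi^i$. Apply the reverse direction of Theorem~\ref{thm:identical_value_function_MFTG_MFMG} to the deviated profile $(\hat\bpi^i,\hatubpi^{\ast,-i})$ to obtain a corresponding $\ubpi'\in\ubPi^\tinycl$ with identical value functions. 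Since the correspondence is componentwise, we may (and should) take $\ubpi'^{-i}=\ubpi^{\ast,-i}$ and $\bpi'^i$ to be the MFTG policy produced from $\hat\bpi^i$ alone by formula~\eqref{eq:construction_closed-loop_Markov_policy_MFC}. Then
\[
J^{\mu_0,i}(\bpi'^{i},\ubpi^{\ast,-i})
=\hat J^{\mu_0,i}(\hat\bpi^{i},\hatubpi^{\ast,-i})
<\hat J^{\mu_0,i}(\hatubpi^\ast)
=J^{\mu_0,i}(\ubpi^\ast),
\]
contradicting the equilibrium property of $\ubpi^\ast$.

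The reverse direction (MFMG $\Rightarrow$ MFTG) is entirely symmetric: from a local Nash equilibrium $\hatubpi^\ast$ in the MFMG, construct the corresponding $\ubpi^\ast$ via the componentwise formula \eqref{eq:construction_closed-loop_Markov_policy_MFC}; an alleged profitable deviation $\bpi^i$ in the MFTG would give, via Definition~\ref{def:correspondence_between_policies} applied componentwise, a deviation $\hat\bpi^i$ in the MFMG with the same (strictly smaller) cost, contradicting the Nash property of $\hatubpi^\ast$.

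Finally, for the Nash equilibrium in expectation associated to $\eta\in\cP(\bar S)$, the argument is unchanged: Theorem~\ref{thm:identical_value_function_MFTG_MFMG} yields the equality $J^{\mu_0,i}(\ubpi)=\hat J^{\mu_0,i}(\hatubpi)$ for \emph{every} $\mu_0\in\bar S$, and the correspondence between policies does not depend on $\mu_0$; integrating the inequalities against $\eta$ preserves them, so the same proof-by-contradiction applies. I do not anticipate a genuine obstacle here, since the real analytic work is already encapsulated in Theorem~\ref{thm:identical_value_function_MFTG_MFMG}; the one point that merits explicit mention is the componentwise structure of the correspondence, which legitimizes substituting one team's strategy while keeping the others fixed on both sides of the equivalence.
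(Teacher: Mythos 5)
Your proposal is essentially the argument the paper has in mind: the paper offers no written proof of Corollary~\ref{corollary:mftg_mfmg_ne_equivalence} beyond calling it a direct consequence of Theorem~\ref{thm:identical_value_function_MFTG_MFMG}, and you supply exactly the missing content, namely that the policy correspondence is componentwise, so a unilateral deviation in one model transfers to a unilateral deviation in the other, and the value identity turns a profitable deviation into a contradiction. The treatment of the equilibrium in expectation by integrating against $\eta$ is also correct.

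One step deserves more care than you give it. When you form the mixed profile $(\bpi'^{i},\ubpi^{\ast,-i})$, with $\bpi'^{i}$ obtained from $\hat\bpi^{i}$ via~\eqref{eq:construction_closed-loop_Markov_policy_MFC} and $\ubpi^{\ast,-i}$ kept as is, neither Step~1 nor Step~2 of the theorem directly gives $J^{\mu_0,i}(\bpi'^{i},\ubpi^{\ast,-i})=\hat J^{\mu_0,i}(\hat\bpi^{i},\hatubpi^{\ast,-i})$: Step~1 requires the \emph{whole} profile to correspond in the sense of Definition~\ref{def:correspondence_between_policies}, and the pushforward of $\bpi'^{i}$ need not equal $\hat\bpi^{i}$ (a generic $\hat a^i\in\hat A^i=\cP(\ufX\times\fA^i)$ may correlate $a^i$ with $x^{-i}$, which no level-0 closed-loop policy can reproduce, so the round trip $\hat\bpi^{i}\mapsto\bpi'^{i}\mapsto\widehat{\bpi'^{i}}$ is not the identity); Step~2, on the other hand, reconstructs \emph{all} components via~\eqref{eq:construction_closed-loop_Markov_policy_MFC}, yielding $\bpi'^{-i}$ that are not literally $\ubpi^{\ast,-i}$. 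To close the argument you should either apply Step~2 to the full deviated profile $(\hat\bpi^{i},\hatubpi^{\ast,-i})$ and then check that the reconstructed components $\bpi'^{-i}$ generate the same state--action processes in law as $\ubpi^{\ast,-i}$ (so that $J^{\mu_0,i}(\bpi'^{i},\bpi'^{-i})=J^{\mu_0,i}(\bpi'^{i},\ubpi^{\ast,-i})$ and the Nash inequality for $\ubpi^{\ast}$ applies), or rerun the Step~2 value computation directly for the mixed profile. This is a repair rather than a change of strategy, but as written the contradiction does not quite land on the hypothesis that $\ubpi^{\ast}$ is a Nash equilibrium against deviations that hold $\ubpi^{\ast,-i}$ fixed.
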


\section{\textbf{Proof of the Main Result}}
\label{section:main_result}
To establish the main result (Theorem~\ref{thm:Main_result}), we build upon the game-theoretic framework of MFMG and leverage results of Dufour and Prieto-Rumeau~\cite{dufour_2024_ne_markov_game}. However, before doing so, we introduce the notion of \emph{Young measures}, which allows us to equip the space $\hatuPi$ with a suitable topology. This step is necessary for several reasons. First, we will assume that the level-1 stochastic kernel is absolutely continuous with respect to a reference probability measure. As a result, when considering a profile of Markov policies $\hatubpi$, only the values lying in the support of the reference measure influence the dynamics. The space of \emph{Young measures} is then defined by quotienting policies according to equivalence with respect to this reference distribution. Second, some of the conditions required for the application of results Dufour and Prieto-Rumeau, notably \cite[Assumption B]{dufour_2024_ne_markov_game}, rely on this structure to properly define deviations. The following subsection aims at adapting the concepts of \cite{dufour_2024_ne_markov_game} to our setting.

\subsection{The Space of Young Measures $\ucY(\lambda)$}

Before diving into new notions, let us recall that the terminology was introduced in Paragraph~\S~\ref{subsec:measurability}. In this subsection, let us consider a reference probability measure $\lambda \in \cP(\bar S)$ and a measurable function $q:\bar S \times \bar S \times \hatuA \to \RR_+$ defining a stochastic kernel $Q$ from $\bar S \times \hatuA$ to $\bar S$ as  
$$
	Q(\sS \mid \mu, \hatua) = \int_{\sS} q(\mu', \mu, \hatua) \lambda(d\mu'), \quad \forall \sS \in \cB_{\bar S}.
$$
We consider in $\hat \Pi^i$ the following equivalence for $\hat \pi, \hat \pi' \in \hat \Pi^i$: 
$$
	\hat \pi \overset{\lambda}{\sim} \hat \pi' \text{ if and only if } \hat \pi(\cdot\mid \mu) = \hat \pi'(\cdot\mid \mu), \quad \lambda\text{-a.s.}
$$

We will denote by $\cY^i(\lambda)$ the corresponding family of equivalence classes, which will be referred to as \defi{Young measures}. The set of Young measures is equipped with the narrow topology, defined as the topology that makes the mappings 
$$
	\hat \pi \mapsto \int_{\bar S} \int_{\hat A^i} f(\mu, \hat a) \hat \pi(\mu)(d\hat a) \lambda(d\mu),
$$
continuous for any $f \in \mathrm{Car}(\ufX \times \hat A^i,\RR)$ such that there exists some $F\in L^1(\bar S, \cB_{\bar S}, \lambda)$ that controls $f$, meaning that $| f(\mu, \hat a)| \, \leq  \, | F(\mu) |$ for every $(\mu,\hat a) \in \bar S\times \hat A^i$. If $(\hat\pi_n)_{n\ge0} \in (\hat \Pi^i)^\NN$ converges toward $\hat\pi \in \hat \Pi^i$, we write $\hat \pi_n \xrightharpoonup[n\to \infty]{\cY^i} \hat \pi$.

We also define naturally $\ucY(\lambda) = \cY^1(\lambda) \times \cdots \times \cY^m(\lambda)$ which is endowed with the product topology. We will say that two Markov strategies $\hatupi, \hatupi ' \in \hatuPi$ are in the same equivalence class of Young measure if and only if $\hat \pi^i \overset{\lambda}{\sim} \hat\pi'^{i}$ for all $i \in [m]$. We will write $\hatupi \overset{\lambda}{\sim} \hatupi '$. According to \cite{dufour_2024_ne_markov_game}, $\cY^i(\lambda)$ is a compact metric space for the defined topology. Therefore $\ucY(\lambda)$ is also a compact metric space. %

Given $\hatupi \in \hatuPi$ and a function $\hat f \in \mathrm{Car}_b(\ufX \times \ufA, \RR)$, define the measurable function $\hat f_{\hatupi} \in L^{\infty}(\bar S, \cB_{\bar S}, \lambda)$ by

$$
\hat f_{\hatupi}(\mu) = \int_{\hatuA} \hat f(\mu, \hatua) \hatupi(\mu)(d\hatua).
$$

For $v \in L^{\infty}(\bar S, \cB_{\bar S}, \lambda) $ and $\hatupi \in \hatuPi$, we define the function $Q_{\hatupi}v \in L^\infty(\bar S, \cB_{\bar S}, \lambda)$ by 
$$
	Q_{\hatupi}v(\mu) = \int_{\hatuA} \int_{\bar S} v(\mu')q(\mu', \mu, \hatua) \lambda(d\mu') \hatupi(\mu)(d\hatua), \quad \forall \mu \in \bar S.
$$

It can be shown that for any $\hatupi \in \ucY(\lambda)$, the two elements in $L^\infty(\bar S, \cB_{\bar S}, \lambda)$ defined above do not depend of the representative element of the class chosen for $\hatupi$. %

\begin{remark}
    Let $\hat \pi \in \cY^i$ and $(\hat \pi_n)_{n\in \NN} \in (\cY^i)^\NN$. $\hat \pi_n \xrightharpoonup[n\to \infty]{\cY^i} \hat \pi$ implies $\hat \pi_n(\mu) \xrightharpoonup[n\to \infty]{\hat A^i} \hat \pi(\mu) \, \, \lambda$-a.s, where the a.s. convergence is defined by: for every $f:\hat A^i \to \RR$ continuous and bounded,
    $$
    \int_{\sS} \int_{\hat A^i} f(\hat a)\hat \pi_n(\mu)(d\hat a) \lambda(d\mu)\xrightarrow[n\to \infty]{} \int_{\sS} \int_{\hat A^i} f(\hat a)\hat \pi_n(\mu)(d\hat a)\lambda(d\mu), \quad \forall \sS \in \cB_{\bar S}.
    $$
    The implication is achieved by dominated convergence. 
\end{remark}

\subsection{Assumptions and Main Result}

We will use the following assumptions. 

\begin{assumption}[Compactness]
    \label{assumption:compactness}
    For each $i \in [m]$, the space $\fA^i$ is compact. 
\end{assumption}

\begin{assumption}[Absolute Continuity]
    \label{assumption:abs_continuity}
    There exists a reference probability measure $\lambda \in \cP(\bar S)$ and a measurable function $q : \bar S \times \Sigma  \to \RR_+$ such that the stochastic kernel $P : \Sigma  \to \bar S$ defined in~\eqref{eq:MFMG_transition_kernel_from_system_func} is absolutely continuous with respect to $\lambda$ and admits $q$ as a density. That is, for any $(\mu, \hatua) \in \Sigma $ and any set $B \in \cB_{\bar S}$,
    $
        P(B \mid \mu, \hatua) = \int_B q(\mu', \mu, \hatua) \, \lambda(d\mu').
    $
    Furthermore, $q$ is bounded and such that for each $\mu',\mu \in \bar S$, $q(\mu',\mu,\cdot)$ is continuous.
\end{assumption}

\begin{remark}
    Notice that, by dominated convergence, the above assumption implies the following continuity condition: for all $(\mu, \hatua) \in \Sigma $ and for every sequence $(\hatua_n)_{n \in \NN} \in \hatuU(\mu)^\NN$ such that $\hatua_n \xrightharpoonup[]{} \hatua$ (weakly),
    \begin{equation}
        \lim_{n \to \infty} \int_{\bar S} \left| q(\mu', \mu, \hatua_n) - q(\mu', \mu, \hatua) \right| \lambda(d\mu') = 0.
    \end{equation}
\end{remark}

\begin{lemma}
    \label{le:continuity_hat_f_i_pi}
    Suppose Assumption~\ref{assumption:continuity} holds.
    Let $\lambda \in \cP(\bar S)$. The mappings $\hatupi \mapsto \hat f^i_{\hatupi}$, $i \in [m]$, defined on $\ucY(\lambda)$ and taking values in $L^\infty(\bar S, \cB_{\bar S},\lambda)$, are continuous.
\end{lemma}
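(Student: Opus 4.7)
The plan is to test weak-$*$ convergence in $L^\infty(\bar S, \cB_{\bar S}, \lambda)$ against an arbitrary $h \in L^1(\bar S, \cB_{\bar S}, \lambda)$, reducing the claim to showing that $\int h \, \hat f^i_{\hatupi_n} \, d\lambda \to \int h \, \hat f^i_{\hatupi} \, d\lambda$ whenever $\hatupi_n \to \hatupi$ in $\ucY(\lambda)$, i.e., whenever $\hat\pi^j_n \to \hat\pi^j$ in $\cY^j(\lambda)$ for each $j = 1, \dots, m$. As a preliminary step I verify that $\hat f^i \in \mathrm{Car}_b(\bar S \times \hatuA, \RR)$: boundedness by $C_f$ is immediate from Assumption~\ref{assumption:continuity}, measurability in $\mu$ follows from measurability of $\Xi$, and continuity in $\hatua$ at fixed $\mu$ combines (i) the Carathéodory continuity of $\Xi$ from Lemma~\ref{le:Xi-well-defined}, giving $\Xi^\mu[\hatua_n] \to \Xi^\mu[\hatua]$ weakly, (ii) the Lipschitz dependence of $f^i$ in its joint-law argument (Assumption~\ref{assumption:continuity}), which upgrades this to uniform convergence of $(x^i, a^i) \mapsto f^i(x^i, a^i, \Xi^\mu[\hatua_n])$, and (iii) the weak convergence of the $i$-th marginal $\hat a^i_n \to \hat a^i$ paired with continuity of $f^i$ in $a^i$, to pass to the limit in the integral \eqref{def:bar_f}.

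Next I implement a telescoping decomposition. For $0 \le k \le m$, define
\[
I_n^{(k)} := \int_{\bar S} h(\mu) \int_{\hatuA} \hat f^i(\mu, \hatua) \prod_{j \le k}\hat \pi^j(\mu)(d\hat a^j) \prod_{j > k}\hat \pi^j_n(\mu)(d\hat a^j) \, \lambda(d\mu),
\]
so that $I_n^{(0)} = \int h \, \hat f^i_{\hatupi_n}\, d\lambda$ and $I_n^{(m)} = \int h \, \hat f^i_{\hatupi}\, d\lambda$, and the task reduces to showing $I_n^{(k-1)} - I_n^{(k)} \to 0$ for each $k$. I rewrite the difference as a single-component integral of the kernel $g_n^{(k)}(\mu, \hat a^k) := \int \hat f^i(\mu, \hatua)\prod_{j<k}\hat\pi^j(\mu)(d\hat a^j)\prod_{j>k}\hat\pi^j_n(\mu)(d\hat a^j)$ against the signed kernel $(\hat\pi^k_n - \hat\pi^k)$, weighted by $h$, and split it as $(g_n^{(k)} - g^{(k)}) + g^{(k)}$, where $g^{(k)}$ is obtained by replacing each $\hat\pi^j_n$ with $j > k$ by $\hat\pi^j$. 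The "limit part" $\int h \int g^{(k)}(\hat\pi^k_n - \hat\pi^k) \, d\lambda$ tends to $0$ directly by the definition of narrow convergence in $\cY^k(\lambda)$, because $g^{(k)}$ is Carathéodory in $(\mu, \hat a^k)$ (continuity in $\hat a^k$ follows from continuity of $\hat f^i$ in $\hatua$ together with dominated convergence) and $h \cdot g^{(k)}$ is dominated by $C_f|h| \in L^1(\lambda)$.

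The main obstacle will be the "error part" $\int h \int (g_n^{(k)} - g^{(k)})(\hat\pi^k_n - \hat\pi^k)\, d\lambda$, whose kernel depends on $n$ through the tail $\prod_{j > k}\hat\pi^j_n$: naive product stability of narrow Young-measure convergence cannot be invoked. I plan to handle this by a nested induction on the number of moving tail factors $m - k$, the base case $k = m$ being trivial. In the inductive step I apply the statement for $m - k - 1$ moving factors to the auxiliary Carathéodory integrand obtained after integrating out $\hat a^k$ against $\hat\pi^k + \hat\pi^k_n$ (which is Carathéodory in $(\mu, \hat a^{k+1}, \dots, \hat a^m)$ by continuity of $\hat f^i$ and dominated convergence), together with the uniform bound $|g_n^{(k)} - g^{(k)}| \le 2C_f$ and dominated convergence in the $\mu$-integral to close the estimate. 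Assumption~\ref{assumption:at-most-countable}, which turns the summation over $\fX^i$ in \eqref{def:bar_f} into a finite sum, is used throughout to preserve the Carathéodory and boundedness structure under these iterated integrations and, in conjunction with the linear dependence of $\hat f^i$ on the integrating factor $\hat a^i(d\ux, da^i)$, to ensure that each inductive reduction stays within the class of admissible test functions.
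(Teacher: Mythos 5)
Your preliminary verification that $\hat f^i \in \mathrm{Car}_b(\bar S\times\hatuA,\RR)$ and your treatment of the ``limit part'' of each telescoping step are sound, and your overall strategy (test against $h\in L^1(\lambda)$, then swap one component of the product policy at a time) is genuinely different from the paper's. The problem is the error term, and your proposed fix does not repair it. That term has the form
\[
\int_{\bar S} h(\mu)\int \hat f^i(\mu,\hatua)\,\prod_{j<k}\hat\pi^j(\mu)(d\hat a^j)\;\bigl(\hat\pi^k_n-\hat\pi^k\bigr)(\mu)(d\hat a^k)\;\Bigl(\prod_{j>k}\hat\pi^j_n(\mu)(d\hat a^j)-\prod_{j>k}\hat\pi^j(\mu)(d\hat a^j)\Bigr)\,\lambda(d\mu),
\]
i.e.\ a pairing of two ``weakly null'' differences, and such pairings do not vanish in general: if two components oscillate in a correlated way in $\mu$ (say $\hat\pi^k_n(\mu)$ and $\hat\pi^{k+1}_n(\mu)$ both equal to $\delta_{r_n(\mu)}$ for a Rademacher-type sequence $r_n$), each converges narrowly while the fiberwise product concentrates on the diagonal rather than converging to the product of the limits. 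Your nested induction cannot exclude this, because the auxiliary integrand obtained after integrating out $\hat a^k$ against $\hat\pi^k_n-\hat\pi^k$ (or $\hat\pi^k_n+\hat\pi^k$) still depends on $n$, whereas the inductive hypothesis is a convergence statement for a \emph{fixed} Carath\'eodory test function; narrow convergence is not uniform over bounded Carath\'eodory families, so an $n$-dependent integrand cannot be substituted into it. The uniform bound $|g_n^{(k)}-g^{(k)}|\le 2C_f$ only gives $O(1)$, not $o(1)$.

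The paper closes exactly this gap by a different mechanism. It first upgrades narrow convergence $\hat\pi^j_n\to\hat\pi^j$ in $\cY^j(\lambda)$ to weak convergence of the fibers, $\hat\pi^j_n(\mu)\rightharpoonup\hat\pi^j(\mu)$ for $\lambda$-a.e.\ $\mu$ (the remark closing the Young-measure subsection), then invokes \cite[Theorem~2.8]{billingsley_1999_book_proba_measure} to deduce $\hatupi_n(\mu)\rightharpoonup\hatupi(\mu)$ fiberwise (weak convergence of marginals implies weak convergence of the product measure on a separable space), and finally uses continuity of $\hatua\mapsto h(\mu)\hat f^i(\mu,\hatua)$ together with dominated convergence in $\mu$, with dominating function $C_f|h|\in L^1(\lambda)$. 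It is this pointwise-in-$\mu$ fiberwise convergence, not any abstract product stability of narrow convergence, that makes the cross terms disappear. To complete your telescoping you would have to import that same ingredient, at which point the telescoping and the induction become unnecessary.
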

\begin{proof}
     Let $(\hatupi_n)_{n} \in \ucY(\lambda)^\NN$ such that $\hatupi_n \xrightarrow[n \to \infty]{\ucY(\lambda)} \hatupi \in \ucY(\lambda)$. We have for any $i \in [m]$, $\hat \pi^i_n(\mu) \xrightharpoonup[n\to\infty]{} \hat \pi^i(\mu),$ for $\lambda$-a.e. $\mu$. $\fA^i$ is a Polish space for every $i\in [m]$, therefore $\hatuA$ is separable. According to \cite[~Theorem~2.8]{billingsley_1999_book_proba_measure}, $\hatupi_n(\mu) \xrightharpoonup[n\to\infty]{} \hatupi(\mu),$ for $\lambda$-a.e. $\mu$. For $h \in L^1(\bar S, \cB_{\bar S}, \lambda)$,
    $$
    \int_{\bar S}\int_{\hatuA}h(\mu)\hat f^i(\mu,\hatua)\hatupi_n(\mu)(d\hatua)\lambda(d\mu) \xrightarrow[n\to \infty]{}\int_{\bar S}\int_{\hatuA}h(\mu)\hat f^i(\mu,\hatua)\hatupi(\mu)(d\hatua)\lambda(d\mu).
    $$
    Because for every $\mu \in \bar S$, $\hatua \to h(\mu)\hat f^i(\mu, \hatua)$ is continuous, the weak convergence holds inside the first integral, and then we conclude by dominated convergence since $|\int_{\hatuA}h(\mu)\hat f^i(\mu,\hatua)\hatupi_n(\mu)(d\hatua) | \leq |h(\mu)| C_f$ by Assumption~\ref{assumption:continuity}.
\end{proof}

\begin{lemma}
    \label{le:continuity_P_pi}
    Suppose Assumption~\ref{assumption:abs_continuity} holds.
    Let $\lambda \in \cP(\bar S)$. For any $v \in L^\infty(\bar S, \cB_{\bar S},\lambda)$, the mapping
    $
    \hatupi \mapsto P_{\hatupi}v,
    $
    defined on $\ucY(\lambda)$ and taking values in $L^\infty(\bar S, \cB_{\bar S},\lambda)$,
    is continuous.
\end{lemma}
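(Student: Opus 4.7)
The plan is to verify the weak-$\ast$ continuity of the map $\hatupi \mapsto P_{\hatupi}v$ in $L^\infty(\bar S, \cB_{\bar S}, \lambda)$, that is, to show that whenever $\hatupi_n \to \hatupi$ in $\ucY(\lambda)$,
\begin{equation*}
\int_{\bar S} h(\mu) P_{\hatupi_n}v(\mu) \lambda(d\mu) \xrightarrow[n\to\infty]{} \int_{\bar S} h(\mu) P_{\hatupi}v(\mu) \lambda(d\mu), \quad \forall h \in L^1(\bar S, \cB_{\bar S}, \lambda).
\end{equation*}
The argument will closely mirror that of Lemma~\ref{le:continuity_hat_f_i_pi}, with the $v$-integrated transition density playing the role of the one-stage cost $\hat f^i$.

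First I would introduce the auxiliary function
\begin{equation*}
g(\mu, \hatua) := h(\mu) \int_{\bar S} v(\mu') q(\mu', \mu, \hatua) \lambda(d\mu'),
\end{equation*}
so that by Fubini the quantity of interest rewrites as $\int_{\bar S} \int_{\hatuA} g(\mu, \hatua) \hatupi_n(\mu)(d\hatua) \lambda(d\mu)$. Using Assumption~\ref{assumption:abs_continuity} (pointwise continuity of $q(\mu', \mu, \cdot)$ and uniform boundedness of $q$), dominated convergence shows that $\hatua \mapsto g(\mu, \hatua)$ is continuous on $\hatuA$ for every $\mu$, and the pointwise bound $|g(\mu, \hatua)| \leq \|v\|_\infty \|q\|_\infty |h(\mu)|$ provides an $L^1(\lambda)$ envelope.

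Next, as used in the proof of Lemma~\ref{le:continuity_hat_f_i_pi}, convergence $\hatupi_n \to \hatupi$ in the product topology of $\ucY(\lambda)$ yields $\hat \pi^i_n(\mu) \xrightharpoonup[n\to\infty]{} \hat \pi^i(\mu)$ in $\cP(\hat A^i)$ for $\lambda$-a.e. $\mu$ and every $i \in [m]$. Since each $\hat A^i$ is Polish, the tensor-product map $(\nu^1, \dots, \nu^m) \mapsto \nu^1 \otimes \cdots \otimes \nu^m$ is weakly continuous from $\prod_i \cP(\hat A^i)$ into $\cP(\hatuA)$, and hence $\hatupi_n(\mu) = \bigotimes_i \hat \pi^i_n(\mu)$ converges weakly to $\hatupi(\mu)$ for $\lambda$-a.e. $\mu$. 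Testing against the bounded continuous function $g(\mu, \cdot)$ yields pointwise $\lambda$-a.e. convergence of $\int_{\hatuA} g(\mu, \hatua) \hatupi_n(\mu)(d\hatua)$, and a final dominated convergence in $\mu$ using the envelope above delivers the desired convergence of the double integrals.

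The only delicate step is the passage from coordinate-wise narrow convergence of the $\hat \pi^i_n$ to weak convergence of the product measures $\hatupi_n(\mu)$, which rests on the continuity of the tensor-product map on Polish spaces; the rest is a direct transcription of Lemma~\ref{le:continuity_hat_f_i_pi} with $\hat f^i$ replaced by $\int v(\mu') q(\mu', \mu, \cdot) \lambda(d\mu')$. I do not anticipate any other serious obstacle.
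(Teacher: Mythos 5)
Your proof is correct and follows essentially the same route as the paper, which itself omits the argument on the grounds that it is the direct analogue of Lemma~\ref{le:continuity_hat_f_i_pi}: you replace $\hat f^i(\mu,\cdot)$ by $\hatua \mapsto \int_{\bar S} v(\mu')q(\mu',\mu,\hatua)\lambda(d\mu')$, whose continuity and boundedness follow from Assumption~\ref{assumption:abs_continuity} and dominated convergence, and then reuse the $\lambda$-a.e.\ weak convergence of the product measures $\hatupi_n(\mu)$ together with a final dominated convergence in $\mu$. The one step you flag as delicate (weak continuity of the tensor product on Polish spaces) is exactly what the paper handles via \cite[Theorem~2.8]{billingsley_1999_book_proba_measure} in the proof of Lemma~\ref{le:continuity_hat_f_i_pi}.
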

We omit the proof since it is similar to the one of Lemma~\ref{le:continuity_hat_f_i_pi}.

Then, we have the following result (recall Definition~\ref{Nash-equilibrium-level-1}).

\begin{theorem}
    \label{thm:existence_stationary_LNE}
    Suppose that Assumptions~\ref{assumption:at-most-countable}, \ref{assumption:continuity}, \ref{assumption:compactness} and~\ref{assumption:abs_continuity} hold. Then, for any distribution $\eta \in \cP(\bar S)$ of initial distributions, the MFMG lifted from the MFTG admits a stationary Nash equilibrium in expectation associated to $\eta$.
\end{theorem}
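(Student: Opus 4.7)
The plan is to cast the lifted MFMG as a discounted Markov game with state-dependent constraints in the framework of Dufour and Prieto-Rumeau~\cite{dufour_2024_ne_markov_game}, verify that all the hypotheses of their existence theorem hold thanks to Assumptions~\ref{assumption:at-most-countable}--\ref{assumption:abs_continuity}, and read off the equilibrium. More precisely, I would fix $\eta\in\cP(\bar S)$ and then apply their theorem to the game with state space $\bar S$, action spaces $(\hat A^i)_i$, constraint correspondences $(\hat U^i)_i$, transition kernel $P$, one-stage costs $(\hat f^i)_i$, discount $\gamma$, and initial distribution $\eta$.

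The first step is to check the structural hypotheses. Assumption~\ref{assumption:at-most-countable} (at most countable and compact, hence finite in the discrete topology) makes $\ufX$ finite, so $\bar S=\cP(\ufX)$ is a compact finite-dimensional simplex. Assumption~\ref{assumption:compactness} together with the finiteness of $\ufX$ makes each $\hat A^i=\cP(\ufX\times\fA^i)$ a compact metric space for the weak topology. The correspondence $\hat U^i(\mu)=\pr_s^{-1}(\{\mu\})$ has nonempty (take any kernel $\ufX\ni\ux\mapsto \kappa^i(\ux)\in\cP(\fA^i)$ and form $\mu\measprod\kappa^i$), convex, compact values; its continuity (upper and lower hemicontinuity) on $\bar S$ follows from the continuity of $\pr_s$ and the fact that one can transport any disintegration $\hat a=\mu\measprod\kappa$ along $\mu_n\to\mu$ via $\hat a_n=\mu_n\measprod\kappa\in\hat U^i(\mu_n)$, using the dominated convergence theorem. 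The closed graph of $\hat U^i$ then makes $\Sigma$ a compact Polish subset of $\bar S\times\hatuA$.

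The second step is to transfer the continuity of the primitives of the MFTG to the level-1 game. Lemma~\ref{le:Xi-well-defined} gives a Carathéodory reconstruction $\Xi$ on $\Sigma$, and Assumption~\ref{assumption:continuity} gives boundedness and Lipschitz continuity of $f^i$ in the joint-law argument. A direct dominated-convergence argument then shows that $\hat f^i(\mu,\cdot)$ is continuous on $\hatuU(\mu)$ for each $\mu$, and boundedness is inherited from $f^i$, so $\hat f^i\in\mathrm{Car}_b(\Sigma,\RR)$. Assumption~\ref{assumption:abs_continuity} directly supplies a reference measure $\lambda$ and a bounded density $q(\cdot,\mu,\cdot)$ that is continuous in $\hatua$. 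Lemmas~\ref{le:continuity_hat_f_i_pi} and~\ref{le:continuity_P_pi} then yield the continuity of $\hatupi\mapsto\hat f^i_{\hatupi}$ and $\hatupi\mapsto P_{\hatupi}v$ on the compact metric space $\ucY(\lambda)$, which are precisely the continuity requirements on the game operators that the Dufour--Prieto-Rumeau existence result imposes on best-reply correspondences through the Bellman operator.

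With all the hypotheses verified, the existence theorem of~\cite{dufour_2024_ne_markov_game} provides a stationary profile $\hatubpi^*\in\hatubPi$, represented by an element of $\ucY(\lambda)$, that is a Nash equilibrium in expectation relative to $\eta$ in the sense of Definition~\ref{def:MFMG_NASH}(ii). The main obstacle I expect is matching notation and hypotheses with~\cite{dufour_2024_ne_markov_game}: in particular, verifying that their continuity assumption on the action constraint correspondence, which is phrased through deviations in the Young-measure space, is implied by our (joint) continuity of $\hat U^i$ and the absolute-continuity assumption on $P$; a related subtlety is that any equilibrium obtained in Young measures must be lifted back to a genuine profile in $\hatubPi$ without changing the value function, which is automatic because both $\hat J^{\cdot,i}$ and the density $q$ depend on $\hatupi$ only through its $\lambda$-equivalence class once one integrates against $\eta$. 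Once this identification is in place, Corollary~\ref{corollary:mftg_mfmg_ne_equivalence} transports the equilibrium back to the original MFTG, thereby also completing the proof of Theorem~\ref{thm:Main_result}.
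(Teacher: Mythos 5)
Your overall strategy coincides with the paper's: cast the lifted MFMG as a constrained discounted Markov game, verify the hypotheses of the Dufour--Prieto-Rumeau existence theorem from Assumptions~\ref{assumption:at-most-countable}--\ref{assumption:abs_continuity}, and use Lemmas~\ref{le:continuity_hat_f_i_pi} and~\ref{le:continuity_P_pi} for the continuity of $\hatupi\mapsto\hat f^i_{\hatupi}$ and $\hatupi\mapsto P_{\hatupi}v$ on $\ucY(\lambda)$. However, there is a concrete missing step. The hypotheses of \cite{dufour_2024_ne_markov_game} (their (A5) and (A6)) require the one-stage costs and the transition density to be Carath\'eodory on the \emph{full} product $\bar S\times\hatuA$, whereas $\hat f^i$ and $q$ are only defined, and only shown Carath\'eodory, on the constraint set $\Sigma$; you establish $\hat f^i\in\mathrm{Car}_b(\Sigma,\RR)$, which does not suffice to invoke their theorem as stated. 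The paper resolves this by constructing an \emph{extended game}: $\tilde f^i$ is defined on all of $\bar S\times\hatuA$ via the Carath\'eodory extension of $\Xi$ from Lemma~\ref{le:Xi-well-defined}, and $\tilde q(\mu',\mu,\hatua)=q\bigl(\mu',\mu,\mathrm{proj}_{\hatuU(\mu)}(\hatua)\bigr)$ via the Wasserstein projection onto the continuous, compact-valued constraint correspondence. One must then prove, by induction on $n$, that the extended game and the original game generate the same laws $\cL(\mu_n,\hatua_n)$ under any admissible policy profile (because policies charge only $\hatuU(\mu_n)$), hence have identical value functions. Your closing remark about ``matching notation and hypotheses'' gestures at a mismatch but does not identify this domain-of-definition issue, which is precisely the technical content of Steps 1 and 2 of the paper's proof.

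Two smaller omissions: the cited result is stated for absorbing games, so the discounted game must first be converted to an equivalent absorbing one by adjoining a cemetery state (standard, but the paper records it); and the continuity of the correspondence $\hat U^i$ that you sketch is used in the paper not to verify their hypothesis (A4) --- which only needs weak measurability with nonempty compact values, supplied by Remark~\ref{rmk:prx_Ui} and Assumption~\ref{assumption:compactness} --- but precisely to make the Wasserstein projection in the extension a Carath\'eodory map. Neither of these invalidates your plan, but the extension argument must actually be carried out for the proof to close.
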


\begin{proof}
    We wish to use \cite[Theorem 3.2, p.~14]{dufour_2024_ne_markov_game}, which ensures the existence of equilibrium for absorbing games. Although our setting is a $\gamma$-discounted model, it can be transformed into an equivalent absorbing game, as discussed in \cite[p.~18]{dufour_2024_ne_markov_game} and detailed in \cite[p.~145]{altman_1999_book_constrained_mdp}. We do not recall here the definition of an absorbing game and instead refer the interested reader to \cite{altman_1999_book_constrained_mdp}. The idea is to augment the discounted game’s state space with a cemetery point, where only one action is available, and to assign zero cost to this state. The transition kernel of the game can then be modified so that the game becomes absorbing, while ensuring that the value functions remain unchanged.
    
    As stated by Dufour and Prieto-Rumeau~\cite{dufour_2024_ne_markov_game}, in the discounted game with no absorption, it suffices to check that Assumption~A and Assumption~B of \cite{dufour_2024_ne_markov_game} are satisfied. But first, notice that the definitions in this original paper, are more general than our framework. Indeed, their cost functions and the transition kernel are defined over the full set $\bar S \times \hatuA$, whereas our cost functions and transitions kernels are defined over $\Sigma $. To apply the result, we will extend our functions and define a new MFMG, called extended game. After verifying that the value functions of this extended game are equal to the one of the original one, and that under Assumptions~\ref{assumption:at-most-countable}, \ref{assumption:continuity}, \ref{assumption:compactness} and~\ref{assumption:abs_continuity}, one can use \cite[Theorem 3.2,~p.~14]{dufour_2024_ne_markov_game}.

    \textbf{Step 1, Extended game definition:} Let $\tilde f^i:\bar S \times \hatuA \to \RR$ defined by 

    \begin{equation}
            \label{def:tilde_f}
            \tilde f^i(\mu, \hatua )= \sum_{x^i \in \fX^i}\int_{\fA^i} f^i(x^i, a^i,\Xi^\mu[\hatua]) \prxai(\hat a^i)(\{x^i\},da^i), \qquad (\mu, \hatua) \in \bar S\times \hatuA.
    \end{equation}
    Recall that here, we use the extended Carathéodory version of $\Xi$, see Lemma~\ref{le:Xi-well-defined} and the paragraph below. Clearly, $\tilde f^i$ is an extension of $\hat f^i$ over $\bar S \times \hatuA$.

    Since $\hat A^i$ is bounded and compact, the weak topology is generated by the convergence in Wasserstein-$1$ distance, where the Wasserstein-$1$ distance is defined by:
    $$
    W_{\hat A^i}(\hat a, \hat a') = \inf_{\substack{(\uX,\alpha) \sim \hat a \\ (\uX',\alpha') \sim \hat a'}}\EE(d_{\ufX}(\uX,\uX') + d_{\fA^i}(\alpha,\alpha')), \quad \forall \hat a,\hat a'\in \hat A^i, \; \forall \, i \in [m],
    $$
    where we recall that $\ufX \times \fA^i$ is endowed with $d_{\ufX \times \fA^i}$ defined by $d_{\ufX \times \fA^i}((\ux,a),(\ux',a')) = d_{\ufX}(\ux,\ux') + d_{\fA^i}(a,a')$. 
    We extend naturally this distance to $\hatuA$ with the distance $W_{\hatuA}$ defined by:
    $$
    W_{\hatuA}(\hatua, \hatua') = \sum_{i=1}^m  W_{\hat A^i}(\hat a^i, \hat a^{i,\prime}), \quad \forall \hatua, \hatua' \in \hatuA
    $$ 
    Let us define the projection operator $\mathrm{{proj}}$ with some abuse of notation. For any closed subset $C$ of $\bar S, \hat \fA, \bar A^i$, $\mathrm{proj}_{C}$ denotes the projection by the Wasserstein distance over this set. For each $i \in [m]$, since the correspondence $\hat U^i$ is continuous with closed, non-empty and compact values (see Remark~\ref{rmk:prx_Ui}), the mapping $(\mu, \hatua) \mapsto \mathrm{proj}_{\hatuU(\mu)} (\hatua) = (\mathrm{proj}_{\hat U^1(\mu)}(\hat a^1), \dots, \mathrm{proj}_{\hat U^m(\mu)}(\hat a^m))$ is Carathéodory on $\bar S \times \hatuA$. Let $\tilde q:\bar S \times \bar S \times \hatuA \to \RR$ be defined by 
    $$
    	\tilde q(\mu', \mu, \hatua) = q(\mu', \mu,\mathrm{proj}_{\hatuU(\mu)}(\hatua)). 
    $$
    Clearly, $\tilde q$ is an extension of $q$ over $\bar S \times \bar S \times \hatuA$. Let $\tilde P: \bar S \times \hatuA \to \bar S$ be defined by:
    $$
    	\tilde P(\mu, \hatua)(d\mu') = \tilde q(\mu', \mu, \hatua) \lambda(d\mu').
    $$
    Again, $\tilde P$ is an extension of $P$ over $\bar S \times \hatuA$. We call extended game of 
    $$
    	\bG := (m, \ufX, \fA^1, \dots, \fA^m, \hat U^1, \dots, \hat U^m, P, \hat f^1, \dots, \hat f^m, \gamma),
    $$
    the game defined by the tuple 
    $$
    	\tilde \bG := (m, \ufX, \fA^1, \dots, \fA^m, \hat U^1, \dots, \hat U^m, \tilde P, \tilde f^1, \dots, \tilde f^m, \gamma).
    $$ 
	Notice that Definitions~\ref{def:MFMG_POLICIES}, \ref{def:MFMG_admissible_state_action_processes}, \ref{def:MFMG_VALUE_FUNCTION} and~\ref{def:MFMG_NASH}, of the policies, the generation, the value function and the Nash equilibrium apply to the extended game. Notice also that the set of policies in these two games are exactly the same because the available actions correspondences are equal between the two games.

    \textbf{Step 2, Equality of the value functions:} Let $(\mu, \hatubpi) \in \bar S \times \hatubPi$. Let $(\bm{\mu}, \hatuba) \in \bar{\bm{\fX}} \times \bm{\hatuA}$ generated in the game $\bG$ by $(\mu,\hatubpi)$. Let $(\bm{\mu}', \hatuba') \in \bar {\bm{\fX}} \times \bm{\hatuA}$ generated in the extended game $\tilde \bG$ by $(\mu,\hatubpi)$. Let us show that for every $n\in\NN$, $\cL(\mu_n) = \cL(\mu_n')$ and $\cL(\mu_n, \hatua_n) = \cL(\mu_n', \hatua_n)$. If this result holds, it will lead to the equality of the value functions of the game $\bG$ and the extended game $\tilde \bG$. Clearly, $\mu_0 = \mu_0' = \mu$ and we have $\cL(\hatua_0 \mid \mu_0) = \cL(\hatua_0' \mid \mu_0') = \hatupi_0(\mu)$. Then, suppose that the induction statement holds for some $n\ge 0$. Since $\hatupi_n(\hatuU(\mu_n)) = 1$, $(\mu_n,\hatua_n) \in \Sigma , \, \PP$-a.s. and $(\mu_n',\hatua_n') \in \Sigma , \, \PP$-a.s. Therefore, $\tilde P(\mu_n',\hatua_n') = P(\mu_n',\hatua_n'), \, \PP$-a.s,  $\cL(\mu_{n+1} \mid \mu_n,\hatua_n) = \cL(\mu_{n+1}' \mid \mu_n',\hatua_n'),  \, \PP$-a.s. Yet, $\cL(\mu_n,\hatua_n) = \cL(\mu_n',\hatua_n')$ by induction hypothesis, so $\cL(\mu_{n+1}) = \cL(\mu_{n+1}')$. Now, $\cL(\hatua_{n+1} \mid \mu_{n+1}) = \hatupi_{n+1}(\mu_{n+1})$ and $\cL(\hatua_{n+1}' \mid \mu_{n+1}') = \hatupi_{n+1}(\mu_{n+1}')$. Moreover we already showed $\cL(\mu_{n+1}) = \cL(\mu_{n+1}')$ so we obtain $\cL(\mu_{n+1}, \hatua_{n+1}) = \cL(\mu_{n+1}', \hatua_{n+1})$. Finally, for all $\mu\in \bar S, \, \hatubpi \in \hatubPi$, $\tilde J^{\mu, i}(\hatubpi) =  \hat J^{\mu, i}(\hatubpi)$.

    \textbf{Step 3, Application of \cite{dufour_2024_ne_markov_game}:} Now, we prove that the extended game admits a Nash equilibrium. Recall that we suppose Assumptions~\ref{assumption:at-most-countable}, \ref{assumption:continuity}, \ref{assumption:compactness} and~\ref{assumption:abs_continuity} hold. We now verify the hypotheses of~\cite[Theorem 3.2, p.~14]{dufour_2024_ne_markov_game} for our extend game.    Assumptions A is composed of many sub-assumptions, but only (A3), (A4), (A5) and (A6) need to be verified, since the others concern absorbing games or games with constraint functions.

    We proceed as follows:
    
    $\bullet$ \textbf{(A3), The $\sigma$-algebra $\bar S$ is countably generated:} For each $i \in [m]$, the individual state space $\fX^i$ is at most countable by assumption. Consequently, the space $\cP(\fX^i)$ of probability measures on $\fX^i$ is Polish, and so its Borel $\sigma$-algebra $\cB_{\cP(\fX^i)}$ is countably generated. The same holds for $\cP(\ufX)$ and $\cB_{\cP(\ufX)}$.
    
    $\bullet$ \textbf{(A4), For each player $i \in [m]$, the action set $\hat A^i$ is compact for the weak topology, and the correspondence $\hat U^i$ is weakly measurable with nonempty compact values:} By compactness of $\ufX \times \fA^i$ (Assumption~\ref{assumption:compactness}), it follows that $\hat A^i := \cP(\ufX \times \fA^i)$ is compact for each $i$. The control correspondence $\hat U^i$ is weakly measurable and admits closed values in $\hat A^i$ according to Remark~\ref{rmk:prx_Ui}. Since $\hat A^i$ is compact, $\hat U^i$ admits compact values. Lastly, for each $i \in [m]$, let $a^i \in \fA^i$ and define the stochastic kernel $Q^i$ from $\ufX$ to $\fA^i$ as follows: $Q^i(\cdot\mid\ux) = \delta_{a^i}$ for each $\ux \in \ufX$. For any $\mu \in \bar S$, $\mu \measprod Q^i \in \hat U^i(\mu)$, and $\hat U^i$ admits non-empty values. 
    
    $\bullet$ \textbf{(A5), For each player $i \in [m]$, we have $\tilde f^i \in \mathrm{Car}_b(\bar S \times \hatuA, \RR)$:} We need to show that for each $i \in [m]$, $\tilde f^i \in \mathrm{Car}_b(\bar S \times \hatuA, \RR)$.
         Let $\mu \in \bar S$, $\hatua \in \hatuA$ and $(\hatua_n)_{n\ge 0} \in \hatuA ^\NN$, such that $\hatua_n \xrightharpoonup[n\to\infty]{} \hatua$. We have:
         \begingroup
         \allowdisplaybreaks
        \begin{align*}
            &\left| \tilde f^i(\mu, \hatua_n) - \tilde f^i(\mu, \hatua) \right|
            \\
            &\leq \big| 
            \sum_{x^i \in \fX^i}\int_{\fA^i}f^i(x^i,a^i,\Xi^{\mu}(\hatua_n))\prxia(\hat a^i_n)(\{x^i\},da^i) 
            \\
            &\qquad-   
            \sum_{x^i \in \fX^i}\int_{\fA^i}f^i(x^i,a^i,\Xi^{\mu}(\hatua))\prxia(\hat a^i_n)(\{x^i\},da^i) \big| 
            \\
            &\qquad +
            \big| \sum_{x^i \in \fX^i}\int_{\fA^i}f^i(x^i,a^i,\Xi^{\mu}(\hatua))\prxia(\hat a^i_n)(\{x^i\},da^i) 
            \\
            &\qquad-  
           \sum_{x^i \in \fX^i}\int_{\fA^i}f^i(x^i,a^i,\Xi^{\mu}(\hatua))\prxia(\hat a^i)(\{x^i\},da^i) \big| \\
            &\leq
            \sum_{x^i \in \fX^i}\int_{\fA^i} \left|  f^i(x^i,a^i,\Xi^{\mu}(\hatua_n))
            -  
            f^i(x^i,a^i,\Xi^{\mu}(\hatua)) \right| \prxia(\hat a^i_n)(\{x^i\},da^i) 
            \\
            &\qquad +
            \big| \sum_{x^i \in \fX^i}\int_{\fA^i}f^i(x^i,a^i,\Xi^{\mu}(\hatua))\prxia(\hat a^i_n)(\{x^i\},da^i) 
            \\
            &\qquad-  
            \sum_{x^i \in \fX^i}\int_{\fA^i}f^i(x^i,a^i,\Xi^{\mu}(\hatua))\prxia(\hat a^i)(\{x^i\},da^i) \big|
        \end{align*}
        \endgroup
        
        Remember that $\Xi$ admits an extension in $\mathrm{Car}(\bar S \times \hatuA, \bar A)$ by Lemma~\ref{le:Xi-well-defined}. The first term converges toward $0$ by Lipshitz argument. The second term converges toward $0$ by definition of the weak convergence. The measurability of $\tilde f^i(\cdot, \hatua)$ comes from the continuity of $f^i$ and the measurability of $\Xi^\cdot[\hatua]$.
        
        $\bullet$ \textbf{(A6), There exists a measurable density function $\tilde q : \bar S \times \bar S \times \hatuA  \to \RR_+$  such that for each $\sS \in \cB_{\bar S}$ and $(\mu, \hatua)\in \bar S \times \hatuA$ we have
        $\tilde P(\sS|\mu, \hatua) = \int_{\sS} \tilde q(\mu',\mu,\hatua)\lambda(d\mu')$ and $$
        \lim_{n\to\infty} \int_{\bar S}\mid \tilde q(\mu', \mu, \hatua_n) - \tilde q(\mu', \mu, \hatua)|\lambda(d\mu') = 0
        $$ 
        whenever $\hatua_n \xrightharpoonup[n\to\infty]{} \hatua$:} Clearly, $\tilde P$ is absolutely continuous with respect to $\lambda$ and admits $\tilde q$ as a measurable density function. Let $\mu \in \bar S$, $\hatua \in \hatuA$ and $(\hatua_n)_{n\ge 0} \in \hatuA ^\NN$ such that $\hatua_n \xrightharpoonup[n\to\infty]{} \hatua$. Since the mapping $(\mu, \hatua) \mapsto Q_{\hatuU(\mu)}(\hatua) \in \mathrm{Car}(\bar S \times \hatuA, \Sigma )$, we have $Q_{\hatuU(\mu)}(\hatua_n)\xrightharpoonup[n\to\infty]{} Q_{\hatuU(\mu)}(\hatua)$. We obtain: 
        \begin{equation*}
        \begin{split}
            \int_{\bar S} | \tilde q(\mu',\mu,\hatua_n) - &\tilde q(\mu',\mu,\hatua) | d\lambda(\mu') 
            \\ &= \int_{\bar S} | q(\mu',\mu,Q_{\hatuU(\mu)}(\hatua_n)) - \tilde q(\mu',\mu,Q_{\hatuU(\mu)}(\hatua)) | d\lambda(\mu'),
        \end{split}
        \end{equation*}
        where the right-hand side converges to $0$ as $n \to +\infty$ by Assumption~\ref{assumption:abs_continuity}.
        
        $\bullet$ {\bfseries (B), The mappings, defined on $\ucY(\lambda)$ and taking values in $L^\infty(\bar S, \cB_{\bar S}, \lambda)$, $\hatupi \mapsto \hat f^i_{\hatupi}$ and $\hatupi \mapsto \tilde P^i_{\hatupi} v$, are continuous for any $i \in [m]$, and any $v \in L^\infty(\bar S, \cB_{\bar S}, \lambda)$:} Clearly the mappings $v \mapsto \tilde f^i_{\hatupi}v$ and  $v \mapsto \tilde P_{\hatupi}v$ are equal to $v \mapsto \hat f^i_{\hatupi}v$ and  $v \mapsto \hat P_{\hatupi}v$ since $\hatupi(\mu)(\hatuU(\mu)) = 1$ for every $\mu \in \bar S$. Therefore Lemma~\ref{le:continuity_hat_f_i_pi} and Lemma~\ref{le:continuity_P_pi} imply \textbf{(B)}. 
        
    Thus, all the required assumptions of Theorem 3.2 in \cite{dufour_2024_ne_markov_game} are satisfied, and, for every distribution of initial distributions, the extended game $\tilde \bG$ admits a Nash equilibrium in expectation.
    
    \textbf{Step 4, Conclusion:} Finally, we defined an extended version of game $\bG$ denoted $\tilde \bG$ and we proved that $\{ \hat J^\mu(\hatupi), \,(\mu,\hatupi) \in \bar S \times \hatuPi \} = \{ \tilde J^\mu(\hatupi), \, (\mu,\hatupi) \in \bar S \times \hatuPi \}$. We verified that we can apply Theorem 3.2 of \cite{dufour_2024_ne_markov_game} over $\tilde \bG$. We can conclude that, for every distribution of initial distribution, the MFMG $\bG$ lifted from the MFTG
    $$
	(m, \fX^1, \dots, \fX^m, \fA^1, \dots, \fA^m, E^1, \dots, E^m, E^{0,0}, \dots, E^{0,m}, F^1, \dots, F^m, f^1, \dots, f^m, \gamma),
	$$
    admits a Nash Equilibrium in expectation.  
    
\end{proof}

\begin{corollary}
   Suppose that Assumptions~\ref{assumption:continuity}, \ref{assumption:compactness} and~\ref{assumption:abs_continuity} hold. For any $\mu \in \bar S$, the MFMG lifted from the MFTG admits a local Nash equilibrium associated to the initial distribution $\mu_0 \in \bar S$. 
\end{corollary}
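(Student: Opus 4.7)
The plan is to derive this corollary directly from Theorem~\ref{thm:existence_stationary_LNE} by specializing the distribution of initial distributions. I would note first that the hypotheses of Theorem~\ref{thm:existence_stationary_LNE} (namely Assumptions~\ref{assumption:at-most-countable}, \ref{assumption:continuity}, \ref{assumption:compactness} and~\ref{assumption:abs_continuity}) are in force here (Assumption~\ref{assumption:at-most-countable} being a standing hypothesis for the lifted MFMG construction via Lemma~\ref{le:Xi-well-defined}), so the existence of a stationary Nash equilibrium in expectation is available for every $\eta \in \cP(\bar S)$.

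The key step is then to apply this with $\eta = \delta_{\mu_0}$, the Dirac mass at the prescribed initial distribution $\mu_0 \in \bar S$. This yields a (stationary) profile $\hatubpi^* \in \hatubPi$ such that, for every $i \in [m]$ and every $\hatubpi \in \hatubPi$,
\begin{equation*}
    \int_{\bar S}\hat J^{\mu,i}(\hatubpi^{*,i},\hatubpi^{*,-i})\,\delta_{\mu_0}(d\mu) \;\le\; \int_{\bar S}\hat J^{\mu,i}(\hatubpi^i,\hatubpi^{*,-i})\,\delta_{\mu_0}(d\mu).
\end{equation*}
Evaluating both integrals against $\delta_{\mu_0}$ collapses them to the pointwise inequality
$
\hat J^{\mu_0,i}(\hatubpi^{*,i},\hatubpi^{*,-i}) \le \hat J^{\mu_0,i}(\hatubpi^i,\hatubpi^{*,-i}),
$
which is exactly the definition of a local Nash equilibrium associated with $\mu_0$ in the sense of Definition~\ref{Nash-equilibrium-level-1}(i).

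There is no substantive obstacle here: all the work has been absorbed into Theorem~\ref{thm:existence_stationary_LNE}, and the present statement is merely the pointwise specialization of the ``in expectation'' formulation. The only subtlety worth mentioning explicitly is that, since $\delta_{\mu_0}$ is a valid element of $\cP(\bar S)$ and the theorem is uniform in $\eta$, no additional integrability or regularity in $\mu_0$ has to be checked; the corollary therefore holds for an \emph{arbitrary} $\mu_0 \in \bar S$, and the resulting equilibrium can moreover be taken to be stationary.
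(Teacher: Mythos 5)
Your proposal is correct and is exactly the paper's argument: the paper's proof is the one-line application of Theorem~\ref{thm:existence_stationary_LNE} with $\eta = \delta_{\mu_0}$, which your collapse of the expectation against the Dirac mass makes explicit. Your side remark that Assumption~\ref{assumption:at-most-countable} must also be in force (it is omitted from the corollary's hypotheses but required by the theorem) is a fair observation and matches the intended reading.
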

\begin{proof}
    It suffices to apply Theorem~\ref{thm:existence_stationary_LNE} with $\eta = \delta_{\mu_0}$.
\end{proof}

The proof of Theorem~\ref{thm:Main_result} is now straightforward: Theorem~\ref{thm:existence_stationary_LNE} gives the existence of the Nash equilibrium for the lifted MFMG, and by Corollary~\ref{corollary:mftg_mfmg_ne_equivalence}, we conclude the existence of an equilibrium for the MFTG.

\section{\textbf{Example: Mean Field Drift of Intentions}}
\label{section:mean_field_drift_of_intentions}
 
We now describe an example where all the assumptions of our general result are satisfied. Each team operates in a finite state space $\fX$. The agents choose the state where they wish to go, and their next positions are sampled according to a perturbed law issued from their law of actions. The agents in each team have to coordinate in order to increase the probability that each agent ends in the state that she chooses. The sampling and the perturbation are achieved via the common noise. 

Before introducing the MFTG, we define the notion of perturbed measures. 
Let $S$ be a finite state space of cardinality $K$, composed of distinct elements $s_1, \dots, s_{K}$, i.e., 
$S := \{s_1, \dots, s_{K}\}.$
Let $\mu \in \mathcal{P}(S)$ be a probability measure on $S$. For each $k \in [K]$, denote $\mu_k := \mu(\{s_k\}).$ 
We denote the set of $K$-dimensional vectors with strictly positive coordinates by $H_+^K := (0,+\infty)^K.$ 
Let $Z \in H_+^K$. We define the \defi{perturbed measure} $[Z\mu] \in \mathcal{P}(S)$ by:
\begin{equation}
    \label{eq:perturbed_measure}
    [Z\mu]_k := \frac{Z_k \mu_k}{\sum_{k'=1}^{K} Z_{k'} \mu_{k'}}, \quad \forall k \in [K].
\end{equation}
Notice that, if all coordinates of $Z$ are approximately equal, then $[Z\mu]$ is close to $\mu$. The support of $\mu$ is stable under perturbation.  
We extend the definition of $[Z\mu]$ to any $Z \in \mathbb{R}_+^K$ such that there exists $k$ with $Z_k = 0$, by setting $[Z\mu]_k := \frac{1}{K}$.
   
We can now proceed with the example. We fix $G\in \NN^*$, and we let:
\begin{itemize}
    \item $m \in \NN^*$ be the number of players;
    \item $\fX^i = \fX = \llbracket 0, G-1 \rrbracket$, $i =1,\dots,m$, be the finite state space for each agent;
    \item $\fA^i = \fA = \fX$,  $i=1,\dots,m$, be the agents' action space;
    \item $K = G^m$ be the cardinality of the joint state space $\fX^m$, (that also corresponds to the cardinality of the joint action space $\fA^m$);
    \item $E^0 = [0,1] \times \RR_+^K$ be the global common noise space. We do not consider a common noise per team;
    \item $\nu^0 = U([0,1]) \otimes \varepsilon(1)^{\otimes K}$ be the law of the common noise over $E^0$, where $U([0,1])$ is the uniform distribution on $[0,1]$ and $\varepsilon(1)$ is the exponential law of parameter 1;
    \item $F^i$ be the system function defined for each $i \in [m]$ by $F^i:\fX \times \fA \times \cP(\fX^m \times \fA^m) \times E^0$ by
    $F^i(x,a,\bar a, (u^0, Z^0)) = \rho_{\fA^m}([Z^0 \pra(\bar a)], u^0)^i,$ 
            for all $(x,a,\bar a,(u^0,Z^0)) \in (\fX,\fA,\cP(\fX^m\times \fA^m), E^0),$ 
    where $\pra(\bar a) \in \cP(\fA^m)$ denotes the marginal distribution over joint actions induced by $\bar a$, and $\rho_{\fA^m} : \cP(\fA^m) \times [0,1] \to \fA^m$ is the Blackwell-Dubins function defined in Lemma~\ref{le:BlackwellDubins}. 
    Even if this function does not depend on the state $x$ and the action $a$, we let them appear to remain consistent with Definition~\ref{def:MFTG_model}. Here, we do not consider idiosyncratic noise, no idiosyncratic noise spaces are introduced.

    Before giving the cost functions, we offer an interpretation of these dynamic functions. Let $\bar a \in \mathcal{P}(\fX^m \times \fA^m)$ be a given joint probability measure over the family of state-action pairs, and let $(u^0, Z^0) \in E^0$. First, the marginal joint law of states $\pra(\bar a)$ is perturbed using the common noise variable $Z^0$. Then, a joint state in $\fX^m$ is sampled from the perturbed distribution $[Z^0\pra(\bar a)]$, using the common random variable $u^0$. Note that if the team $i \in [m]$ is fully ``coordinated'', in the sense that $\prai(\bar a) = \delta_a$ for some $a \in \fA$, then $\prai([Z\pra(\bar a)]) = \prai(\bar a) = \delta_a$: the perturbation does not affect the law of the $i$-th team's state.  Therefore, in this framework, teams should  coordinate their states if they want to minimize the impact of perturbations. 
    \item For each $i \in [m]$, define the cost function $f^i: \fX \times \fA \times \cP(\fX^m \times \fA^m) \to \RR$ of team $i$  by:
    \begin{equation*}
        \begin{split}
            f^i(x,a,\bar a) = |x - x^i_*| + \sum_{\substack{1 \leq j \leq m \\ j \ne i}} w^i_j \int_{\fX} |x - x^j|\prxj(\bar a)(dx^j), \\ \quad  \forall (x, a,\bar a) \in  \fX \times \fA \times \cP(\fX^m \times \fA^m),
        \end{split}
    \end{equation*}
    where $x_*^i \in \fX$ is a target to reach by the $i$-th team, and $\underline{w}^i \in \{-1,0,1 \}$ indicates if the team has to get close ($w^i_j =-1)$ or far ($w^i_j = 1)$ from the $j$-th team. Note that this cost function does depend on the action $a$.
\end{itemize}
We show, in Appendix~\ref{app:ex-verif-assumptions}, that Assumptions~\ref{assumption:at-most-countable}, \ref{assumption:continuity}, \ref{assumption:compactness} and~\ref{assumption:abs_continuity} hold. As a consequence, we can apply Theorem~\ref{thm:Main_result} and get existence of local and in expectation Nash Equilibrium. 
We also provide some illustrations of the dynamics in Appendix~\ref{app:mfdi_illustrations}.

\section{\textbf{Conclusion}} 
In this work, we studied discrete-time mean field type games (MFTG), representing Nash equilibria between infinitely many agents organized into teams. We introduced a general probabilistic framework for MFTG, subject to both idiosyncratic and mean-field level common noise. By reformulating the original MFTG problem as a mean field Markov game (MFMG), where the state variable corresponds to the distribution of agents’ states within each team, we established an equivalence between the two formulations. This equivalence allows us to apply tools from Markov game theory with state-dependent action sets, and in particular recent existence results under regularity assumptions on the transition kernel.

Our framework is broad enough to accommodate at most countable state spaces and general compact action spaces, and we highlighted some of the technical challenges related to existence of equilibrium in such settings. We illustrated our approach with the example of Mean Field Drift of Intentions, which demonstrates how common noise can induce randomization and coordination within teams. We also did not address the question of uniqueness of the Nash equilibrium, and investigating this issue under our assumptions remains another possible direction for future work. Another direction consists in studying general algorithms to obtain one optimal policies.  Applications with real-world data have not yet been proposed; however, they could lead the way to new techniques from an engineering perspective.

\appendix

\section{Well Definiteness of $\Xi$}\label{app:Xi-well-defined}
\begin{proof}[Proof of Lemma~\ref{le:Xi-well-defined}]
    Let $(\mu, \hatua) \in \Sigma $ and $\bar a, \bar a' \in \bar A$ verifying \eqref{eq:Xi-prxai} and \eqref{eq:Xi-kernel}. Let $\sS \in 2^S$ and $\sA \in \cB_{\uA}$. We have:
    \begingroup 
    \allowdisplaybreaks
    \begin{align*}
        \bar a(\sS, \sA) 
        &=
        \sum_{s \in \sS}\int_{\sA} \bar a(\{s\}, d\ua) 
        \\
        &= 
        \sum_{s \in \sS}\int_{\sA}  \bar a( d\ua \mid s) \pr_s(\bar a)(\{s\}) 
        \\
        &= 
        \sum_{s \in \sS}\int_{\sA}  \prod_{i = 1}^m\hat a^i( da^i \mid s)\mu(\{s\})
        \\
        &= 
        \sum_{s \in \sS}\int_{\sA}  \bar a'( d\ua \mid s) \pr_s(\bar a')(\{s\}) 
        \\
        &=
        \sum_{s \in \sS}\int_{\sA} \bar a'(\{s\}, d\ua) 
        = 
        \bar a'(\sS, \sA),     
    \end{align*}
    \endgroup
    where we used~\eqref{eq:Xi-prxai} in the second line, ~\eqref{eq:Xi-kernel} in the third line, and the remaining steps proceed analogously. Hence $\bar a = \bar a'$ and that $\Xi$ is well defined on $\Sigma$.
    
    Now, we show that $\Xi \in \mathrm{Car}(\Sigma , \bar A)$. Recall that $S$ is at most countable. Let $\hatua \in \hatuA$. Suppose that there exists $\mu \in \bar S$ such that $\hatua \in \hatuU(\mu)$. The $\Xi^\cdot[\hatua]$ is defined on $\{\mu\}$ and is clearly measurable.

    Let us show that for each $\mu \in \bar S$, $\Xi^\mu[\cdot]$ is continuous for the weak topology.
    \footnote{It is for this point that we need $S$ to be finite.} 
    Let $\mu \in \bar S$, $\hatua \in \hatuU(\mu)$ and $(\hatua_n)_{n\ge0} \in \hatuU(\mu)^\NN$ such that $\hatua_n \xrightharpoonup[n \to \infty]{} \hatua$. First, for each $i\in [m]$, each $\sA^i \in \cB_{A^i}$ and each $\sS \in 2^S$, we have by definition of the weak convergence
    $
        \lim_{n\to\infty}\mid \hat a^i_n(\sS, \sA^i) - \hat a^i(\sS, \sA^i) \mid = 0.
    $
    By disintegration along $\sS$ we also have 
    $$
        \lim_{n\to\infty} \mid \sum_{s \in \sS} [\hat a^i_n( \sA^i \mid s) -  \hat a^i( \sA^i \mid s)] \mu(\{s\}) \mid = \lim_{n\to\infty}\mid \hat a^i_n(\sS, \sA^i) - \hat a^i(\sS, \sA^i) \mid = 0.
    $$
    By choosing $\sS = \{s\}$ for some $s \in S$, we obtain the weak convergence for each $i \in [m]$ of $\hat a^i_n( \cdot \mid s)$ toward $\hat a^i( \cdot \mid s)$. Let for each $i \in [m], \sA^i \in \cB_{A^i}$ and $\underline{\sA} = \bigtimes_{i = 1}^m \sA^i$ and $s \in S$. We deduce that:\footnote{If $S$ were not finite, the pointwise convergence of the kernels would fail, and we could not conclude.}
    \begin{align*}
        &\lim_{n\to\infty}\mid \Xi^\mu[\hatua](\{s\},\underline{\sA}) -  \Xi^\mu[\hatua_n](\{s\},\underline{\sA}) \mid 
        \\
        &= \lim_{n\to\infty}\Big| \prod_{i = 1}^m \hat a^i_n(\sA^i \mid s) -  \prod_{i = 1}^m \hat a^i(\sA^i \mid s) \Big| \mu(\{s\})  = 0.
    \end{align*}

    We show that $\Xi$ admits a Carathéodory extension over $\bar S \times \hatuA$. Let $\mu \in \bar S$. We define $\mathrm{proj}_{\hatuU(\mu)}$ as in the proof of Theorem~\ref{thm:existence_stationary_LNE}.\footnote{It is for this point that we need $S$ compact.} As remarked in the proof, $\mathrm{proj}_{\hatuU(\mu)}$ is continuous over $\hatuA$. Let us define $\tilde \Xi^\mu : \hatuA \to \bar A$ such that for every $\hatua \in \hatuA$, $\tilde \Xi^\mu[\hatua] = \Xi^\mu[\mathrm{proj}_{\hatuU(\mu)}(\hatua)]$. $\tilde \Xi$ is clearly an extension of $\Xi$, and since $\mathrm{proj}_{\hatuU(\mu)}$ is continuous with values in $\hatuU(\mu)$, $\tilde \Xi^\mu$ is continuous. 
\end{proof}

\section{Identifying Conditional Joint Distribution with Kernels}
\label{app:identify_conditional_joint_dist_with_kernels}
\begin{proof}[Proof of Lemma~\ref{le:reconstruction_joint_law}]
    
    Let $n\ge 0$. By~\eqref{eq:definition_action_Markov_closed_loop}, for each $i \in [m]$, $\alpha^i_n$ is $\sigma(\uX_n, \PP^0_{\uX_n}, \vartheta^{0,i}_n, \vartheta_n^i)$-measurable. But, by definition, $\PP^0_{\uX_n}$ is $ \sigma(\uvarepsilon^0_k, \uvartheta^0_{k-1}, 1 \leq k \leq n)$  measurable. Therefore, $\alpha^i_n$ is $\sigma(\uX_n,  (\uvarepsilon^0_k, \uvartheta^0_{k-1})_{1 \leq k \leq n}, \vartheta_n^{0,i}, \vartheta_n^i)$-measurable. Let $\cG^{\tinycl}_n = \sigma(\uX_n,  (\uvarepsilon^0_k, \uvartheta^0_{k})_{1 \leq k \leq n})$. For $i,j \in [m], i\ne j,$, since $\vartheta^i_n, \vartheta^j_n$ are independent variables, we deduce that $\alpha^i_n \perp_{\cG^{\tinycl}_n}\alpha^j_n$. This leads to the following: 
    $$
        \cL(\ualpha_n \mid \uX_n,\uvarepsilon^0_k, \uvartheta^0_{k}, 1 \leq k \leq n) = \bigotimes_{i= 1}^m \cL(\alpha_n^i \mid \uX_n,\uvarepsilon^0_k, \uvartheta^0_{k}, 1 \leq k \leq n), \quad \PP\text{-a.s.}
    $$
    Equivalently: 
    $
        \cL(\ualpha_n \mid \uX_n, \cF^0_n) = \bigotimes_{i= 1}^m \cL(\alpha_n^i \mid \uX_n,\cF^0_n),$ $\PP\text{-a.s.}
    $
    Let $\PP^0_{\ualpha_n \mid \uX_n} = \cL(\ualpha_n \mid \uX_n, \cF^0_n)$ and $\PP^0_{\alpha_n^i \mid \uX_n} = \cL(\alpha_n^i \mid \uX_n, \cF^0_n)$. By the disintegration lemma (Lemma~\ref{le:Disintegration}), we obtain: 
    \begin{align*}
        \PP^0_{(\uX_n, \ualpha_n)}(d\ux, d\ua) 
        &= \PP^0_{\ualpha_n \mid \uX_n}(d\ua \mid \ux)  \PP^0_{\uX_n}(d\ux), 
        &\PP\text{-a.s.}
        \\
        &= \bigotimes_{i = 1}^m\PP^0_{\alpha^i_n \mid \uX_n}(d\ua \mid \ux)  \PP^0_{\uX_n}(d\ux) , 
        &\PP\text{-a.s.}
        \\
        &= \Xi^{\PP^0_{\uX_n}}[(\PP^0_{(\uX_n, \alpha^i_n)})_{1\leq i \leq m}](d\ux, d\ua), 
        &\PP\text{-a.s.}
    \end{align*}
    We proved Equation~\eqref{eq:reconstruction_PP^0}. Next, we prove~\eqref{eq:indep_PP^0}. 
    For each $i \in [m]$, by Equation~\eqref{eq:definition_action_Markov_closed_loop}, we have:
    $$
        \cL(\alpha^i_n \mid \uX_n, (\uvarepsilon^0_k, \uvartheta^0_k)_{0 \leq k \leq n}) =  \pi^i_n(X^i_n, \PP^0_{\uX_n}, \vartheta^{0,i}_n), \quad \PP\text{-a.s.} 
    $$
    By definition, $\PP^0_{\uX_n}$ is $\sigma(\uvarepsilon^0_k, \uvartheta^0_{k-1}, 1 \leq k \leq n)$-measurable. Hence,
    $$
        \cL(\uX_n,\alpha^i_n \mid (\uvarepsilon^0_k, \uvartheta^0_k)_{0 \leq k \leq n}) = \PP_{\uX_n}^0 \measprod  \pi^i_n(\sP^i(\cdot), \PP^0_{\uX_n}, \vartheta^{0,i}_n), \quad \PP\text{-a.s.} 
    $$
    where $\sP^i: \ux \mapsto x^i$. So: 
    $$
        \PP^0_{\uX_n,\alpha^i_n} = \PP_{\uX_n}^0 \measprod \pi^i_n(\sP^i(\cdot), \PP^0_{\uX_n}, \vartheta^{0,i}_n), \quad \PP\text{-a.s.}  
    $$
    Moreover, for each $n \in \NN$ and $i,j \in [m]$ such that $i \ne j$, $\vartheta^{0,i}_n$ is independent of $\vartheta^{0,j}_n$. We thus obtain:
    $$
        \PP^0_{\uX_n,\alpha^i_n} \perp_{\PP^0_{\uX_n}} \PP^0_{\uX_n,\alpha^j_n}, \quad \PP\text{-a.s.}, \quad  \forall i,j \in [m]\text{ such that } i \ne j, \forall n\in \NN.
    $$
\end{proof}

\section{MFMG Value Functions}\label{app:MFMG_valuef}

\begin{lemma}
    \label{le:equality_law_pair_MFMG}
    Let $(\bm{\mu}, \hatuba)$ and  $(\bm{\mu}', \hatuba')$ be two pairs of profile of states and profile of actions generated by $(\mu_0, \hatubpi)$. Then  for all $\ge   0, \, \cL(\mu_n) = \cL(\mu_n')$ and  $ \cL(\mu_n, \hatua_n) = \cL(\mu_n', \hatua'_n)$.
\end{lemma}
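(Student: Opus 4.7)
The plan is to run a simple induction on $n\ge0$, showing simultaneously that $\cL(\mu_n)=\cL(\mu_n')$ and $\cL(\mu_n,\hatua_n)=\cL(\mu_n',\hatua_n')$. All the ingredients are already built into Definition~\ref{def:MFMG_admissible_state_action_processes}: the initial condition pins down $\cL(\mu_0)$, the kernel $P$ pins down the conditional law of $\mu_{n+1}$ given $(\mu_n,\hatua_n)$, and the strategy profile $\hatupi_n$ together with the conditional independence of the $\hat a^i_n$ given $\mu_n$ pins down $\cL(\hatua_n\mid\mu_n)$. Nothing more than pushing these three elementary facts through an induction should be required.

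Concretely, for the base case $n=0$ I would note that $\mu_0=\mu_0'=\mu$ deterministically, so $\cL(\mu_0)=\cL(\mu_0')$. Applying \eqref{eq:level-1_gen_lawi} and \eqref{eq:level-1_gen_ind} to both processes yields $\cL(\hat a^i_0\mid\mu_0)=\hat\pi_0^i(\mu_0)=\cL(\hat a^{i,\prime}_0\mid\mu_0')$ and conditional independence across $i$; thus, invoking the equivalent form \eqref{eq:level-1_gen_law}, one has $\cL(\hatua_0\mid\mu_0)=\hatupi_0(\mu_0)=\cL(\hatua'_0\mid\mu'_0)$, which combined with $\cL(\mu_0)=\cL(\mu_0')$ gives $\cL(\mu_0,\hatua_0)=\cL(\mu_0',\hatua'_0)$.

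For the inductive step, assume $\cL(\mu_n,\hatua_n)=\cL(\mu_n',\hatua_n')$. By \eqref{eq:level-1_gen_law_mun}, conditionally on $(\mu_n,\hatua_n)$ the law of $\mu_{n+1}$ is $P(\mu_n,\hatua_n)$, and similarly for the primed chain; so the joint law of $(\mu_n,\hatua_n,\mu_{n+1})$ agrees with that of $(\mu_n',\hatua_n',\mu_{n+1}')$, and in particular $\cL(\mu_{n+1})=\cL(\mu_{n+1}')$. Applying \eqref{eq:level-1_gen_lawi}--\eqref{eq:level-1_gen_ind} once more at time $n+1$ gives $\cL(\hatua_{n+1}\mid\mu_{n+1})=\hatupi_{n+1}(\mu_{n+1})$ and the same for the primed pair, hence $\cL(\mu_{n+1},\hatua_{n+1})=\cL(\mu_{n+1}',\hatua_{n+1}')$, closing the induction.

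There is essentially no obstacle; the only mild subtlety is to make sure one uses \eqref{eq:level-1_gen_law} (the joint form) rather than the per-team version \eqref{eq:level-1_gen_lawi} when assembling $\cL(\hatua_n\mid\mu_n)$, so that the equality of joint distributions—not merely of marginals—propagates. As a corollary, this justifies that the value function in Definition~\ref{def:MFMG_VALUE_FUNCTION} depends only on $(\mu_0,\hatubpi)$ and not on the particular generated pair, since it is expressed as an expectation of a function of $(\mu_n,\hatua_n)$.
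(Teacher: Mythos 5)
Your proposal is correct and follows essentially the same route as the paper: an induction on $n$ in which equality of the joint laws $\cL(\mu_n,\hatua_n)$ propagates to $\cL(\mu_{n+1})$ via the transition kernel and then to $\cL(\mu_{n+1},\hatua_{n+1})$ via the policy $\hatupi_{n+1}$. The only cosmetic difference is that the paper carries out the state-update step using the explicit representation $\mu_{n+1}=\bar F(\mu_n,\hatua_n,\uvarepsilon^0_{n+1})$ with the common noise independent of $(\mu_n,\hatua_n)$, whereas you invoke the kernel $P$ directly; the substance is identical.
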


\begin{proof}
We proceed by induction, where the induction hypothesis is, for $n \ge 0$, $(H_n) : \cL(\mu_n) = \cL(\mu_n')$ and  $ \cL(\mu_n, \hatua) = \cL(\mu_n', \hatua')$. First, $\mu_0 = \mu_0' = \mu$. Then, $\cL(\hatua_0 \mid \mu_0) = \hatupi_0(\mu) = \cL(\hatua_0' \mid \mu_0')$. Let $n \ge 0$ and suppose that $(H_n)$ is verified. Since $\varepsilon^0_{n+1}$ is independent of $(\mu_n, \hatua_n)$ and $(\mu_n', \hatua_n')$, we have $\cL(\mu_n, \hatua_n, \varepsilon^0_{n+1}) = \cL(\mu_n', \hatua_n', \varepsilon^0_{n+1})$. Since $\mu_{n+1} = \bar F(\mu_n, \hatua_n, \varepsilon^0_{n+1})$ and $\mu_{n+1}' = \bar F(\mu_n', \hatua_n', \varepsilon^0_{n+1})$, we deduce that $\cL(\mu_{n+1}) = \cL(\mu_{n+1}')$. Let $\phi: \bar S\times \hatuA \to \RR_+$ be a continuous and measurable function. We have:
    \begin{align*}
        \EE\big[\phi(\mu_{n+1}, \hatua_{n+1}) \big] 
        &=
        \EE\big[ \EE \big( \phi(\mu_{n+1}, \hatua_{n+1}) \mid \mu_{n+1} \big) \big] 
        \\
        &=
        \EE\big[ \int_{\hatuA} \phi(\mu_{n+1}, \hatua) \cL(\hatua_{n+1}\mid\mu_{n+1})(d\hatua) \big]
        \\
        &=
        \EE\big[ \int_{\hatuA} \phi(\mu_{n+1}, \hatua) \prod_{i = 1}^m\hat \pi_{n+1}(\mu_{n+1})(d\hat a^i) \big]
        \\
        &=
        \EE\big[ \int_{\hatuA} \phi(\mu_{n+1}', \hatua) \prod_{i = 1}^m\hat \pi_{n+1}(\mu_{n+1}')(d\hat a^i) \big] 
        \\
        &=
        \EE\big[ \int_{\hatuA} \phi(\mu_{n+1}', \hatua) \cL(\hatua_{n+1}'\mid\mu_{n+1}')(d\hatua) \big]
        \\
        &=
        \EE\big[ \EE \big( \phi(\mu_{n+1}', \hatua_{n+1}') \mid \mu_{n+1}' \big) \big] = \EE\big[\phi(\mu_{n+1}', \hatua_{n+1}') \big],
    \end{align*}
where in third line we used~\eqref{eq:level-1_gen_law}, in the fourth line we used $(H_n)$ and in the firth line we used again~\eqref{eq:level-1_gen_law}.
\end{proof}

\begin{lemma}
    \label{le:MFMG_valuef}
    For each $\mu_0 \in \bar S$, each $\hatupi \in \hatuPi$ and each player $i \in [m]$, the value functions $\hat J^{\mu_0,i}(\hatubpi)$ given in~\eqref{eq:MFMG_valuef} is well defined.
\end{lemma}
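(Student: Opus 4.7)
The plan is to check two properties of the right-hand side of~\eqref{eq:MFMG_valuef}: first, that the expected value of the discounted sum exists as a finite real number, and second, that it does not depend on the particular pair $(\bm{\mu},\hatuba)$ used to realize the generation, so that writing $\hat J^{\mu_0,i}(\hatubpi)$ with no mention of $(\bm{\mu},\hatuba)$ is unambiguous.

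For the finiteness step I would rely on the built-in boundedness of the MFMG cost. Per Definition~\ref{def:MFMG}, $\hat f^i$ is a bounded measurable function on $\Sigma$, so there is a constant $C>0$ with $|\hat f^i|\le C$ on $\Sigma$. By the admissibility condition $\hat\pi_n^i(\mu)(\hat U^i(\mu))=1$ built into Definition~\ref{def:MFMG_POLICIES}, together with the independence property~\eqref{eq:level-1_gen_ind} expressed as the product form~\eqref{eq:level-1_gen_law}, one gets $\hatua_n\in\hatuU(\mu_n)$ and therefore $(\mu_n,\hatua_n)\in\Sigma$, $\PP$-almost surely, for every $n\ge0$. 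Hence $\hat f^i(\mu_n,\hatua_n)$ is well defined a.s., measurable, and bounded in absolute value by $C$. Since $\gamma\in(0,1)$, Tonelli's theorem gives
\[
\EE\Bigl[\sum_{n\ge 0}\gamma^n\bigl|\hat f^i(\mu_n,\hatua_n)\bigr|\Bigr]\;\le\;\frac{C}{1-\gamma}\;<\;\infty,
\]
so Fubini applies and the expectation in~\eqref{eq:MFMG_valuef} equals the absolutely convergent series $\sum_{n\ge 0}\gamma^n\,\EE[\hat f^i(\mu_n,\hatua_n)]$.

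For the intrinsic character of this value, I would invoke Lemma~\ref{le:equality_law_pair_MFMG} term by term. If $(\bm{\mu},\hatuba)$ and $(\bm{\mu}',\hatuba')$ are both generated by $(\mu_0,\hatubpi)$, that lemma gives $\cL(\mu_n,\hatua_n)=\cL(\mu_n',\hatua_n')$ for every $n\ge 0$; combined with the measurability of $\hat f^i$, this yields $\EE[\hat f^i(\mu_n,\hatua_n)]=\EE[\hat f^i(\mu_n',\hatua_n')]$. Summing in $n$, using the uniform dominance justified in the previous step, shows that the two realizations produce the same series, so $\hat J^{\mu_0,i}(\hatubpi)$ depends only on $(\mu_0,\hatubpi,i)$. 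The only mild obstacle I anticipate is confirming that the evaluations $\hat f^i(\mu_n,\hatua_n)$ make sense despite $\hat f^i$ being defined only on $\Sigma$, but this is immediate from the admissibility of the level-1 mixed strategy functions, so no substantive difficulty arises and the lemma follows at once from Lemma~\ref{le:equality_law_pair_MFMG} and the bounded-cost assumption.
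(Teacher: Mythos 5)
Your proposal is correct and follows essentially the same route as the paper: the paper's proof likewise reduces each term to the law $\cL(\mu_n,\hatua_n)$ via linearity of expectation and then invokes Lemma~\ref{le:equality_law_pair_MFMG} to conclude independence of the chosen realization. Your additional explicit checks (finiteness via boundedness of $\hat f^i$ and $\gamma\in(0,1)$, and the observation that admissibility of the policies places $(\mu_n,\hatua_n)$ in $\Sigma$ almost surely so that $\hat f^i$ is evaluated on its domain) are correct details that the paper leaves implicit.
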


\begin{proof}
    Using the linearity of the expectation, we can notice that, for each player $i \in [m]$, the value functions $\hat J^{\mu_0,i}(\hatubpi)$ depends only of law $\cL(\mu_n, \hat a_n)$ when $(\bm{\mu}, \hatuba)$ is a pair of profile of states and profile of actions generated by $(\mu_0, \hatubpi)$. Using Lemma~\ref{le:equality_law_pair_MFMG} we deduce that the value does not depend upon the particular choice of the pair of state action processes $(\bm{\mu}, \hatuba)$, generated by $(\mu_0, \hatubpi)$. We can conclude that the function is well defined. 
\end{proof}

\section{Additional Details for Section~\ref{sec:relations-models}}
\label{sec:add-details}

\begin{proof}[Proof of Lemma~\ref{le:dynamics_of_conditional_distribution}]
    Let $\phi: \ufX \to \RR_+$, $\psi: \uE^0 \to \RR$ and $h: (\underline{\Theta}^0 \times \uE^0)^n \to \RR$ bounded measurable functions. We have:
    \begin{align*}
        &\EE\big[\psi(\uvarepsilon^0_{n+1}) h((\uvartheta^0_k)_{k \leq n}, (\uvarepsilon^0_k)_{k \leq n} ) \phi(\uX_{n+1})\big]
        \\
        &= 
        \EE\big[\psi(\uvarepsilon^0_{n+1}) h((\uvartheta^0_k)_{k \leq n}, (\uvarepsilon^0_k)_{k \leq n} ) \times 
        \\
        &\qquad\qquad\qquad
        \phi(\uF(\uX_n, \ualpha_n, \PP^0_{(\uX_n, \ualpha_n)}, \uvarepsilon_{n+1}, \uvarepsilon^0_{n+1}))\big] 
        \\
        &=
        \EE\Bigg[\psi(\uvarepsilon^0_{n+1}) \EE \Big[ h((\uvartheta^0_k)_{k \leq n}, (\varepsilon^0_k)_{k \leq n} ) \times 
        \\
        &\qquad\qquad\qquad
        \phi(\uF(\uX_n, \ualpha_n, \PP^0_{(\uX_n, \ualpha_n)}, \uvarepsilon_{n+1}, \varepsilon^0_{n+1})) \mid \cF^0_n, \uvarepsilon_{n+1}, \uvarepsilon^0_{n+1} \Big] \Bigg] 
        \\
        &=
        \int_{\uE \times E^0} \unu(d\ue)  \unu^0(d\ue^0)\psi(\ue^0)
        \EE \Bigg[h((\uvartheta^0_k)_{k \leq n}, (\uvarepsilon^0_k)_{k \leq n} )  \times 
        \\
        &\qquad\qquad\qquad
        \sum_{\ux \in \ufX}\int_{\ufA} \PP^0_{(\uX_n, \ualpha_n)}(\{\ux\}, d\ualpha)\phi(\uF(\ux, \ualpha, \PP^0_{(\uX_n, \ualpha_n)}, \ue, \ue^0)) \Bigg]
        \\
        &=
        \EE \Big[ \psi(\varepsilon^0_{n+1})h((\uvartheta^0_k)_{k \leq n}, (\uvarepsilon^0_k)_{k \leq n} )  \times 
        \\
        &\qquad\qquad\qquad
        \sum_{\ux \in \ufX} \int_{ \ufA \times \uE} \PP^0_{(\uX_n, \ualpha_n)}(\{\ux\}, d\ualpha)  \unu(d\ue) \phi(\uF(\ux, \ualpha, \PP^0_{(\uX_n, \ualpha_n)}, \ue, \uvarepsilon^0_{n+1})) \Big]
        \\
        &=
        \EE \left[ \psi(\uvarepsilon^0_{n+1})h((\uvartheta^0_k)_{k \leq n}, (\uvarepsilon^0_k)_{k \leq n} ) \sum_{\ux \in \ufX} \phi(\ux) \zeta(\{\ux\}) \right],
    \end{align*}
    where $\zeta(\{\ux\}) = (\PP^0_{(\uX_n, \ualpha_n)} \otimes \unu)\circ\uF(\cdot, \cdot, \PP^0_{(\uX_n, \ualpha_n)}, \cdot, \uvarepsilon^0_{n+1})^{-1}(\{\ux\}) = \bar F(\PP^0_{\uX_n}, [(\PP^0_{(\uX_n, \alpha_n^i)})_{1\leq i\leq m}], \uvarepsilon^0_{n+1})(\{\ux\})$ where we used Lemma~\ref{le:reconstruction_joint_law}. Note that $\zeta$ is a random measure since it depends of the random variable $\uvarepsilon^0_{n+1}$.
\end{proof}

\begin{proof}[Proof of Lemma~\ref{le:identify_conditional_joint_dist_with_kernels}]
    We show this result by induction. For $n =0$, the initialization is easily verified. Suppose that the property is true for some $n \ge 0$. Let $\phi:\cP(\ufX) \to \RR_+$ a continuous and bounded function. We have: 
    \begin{align*}
        \EE\big[\phi(\zeta_{n+1})\big] 
        &=
        \EE\big[\phi(\bar F(\zeta_n, \hatueta_n, \uvarepsilon^0_{n+1})\big]
        \\
         &=
        \EE\big[\phi \big( \; (\Xi^{\zeta_n}[\hatueta_n] \otimes \unu)\circ \uF(\cdot, \cdot, \Xi^{\zeta_n}[\hatueta_n], \cdot,  \uvarepsilon^0_{n+1})^{-1} \; \big)\big]
        \\
        &=
        \EE\big[\phi \big( \; (\PP^0_{(\uX_n, \ualpha_n)}\otimes \unu)\circ \uF(\cdot, \cdot, \PP^0_{(\uX_n, \ualpha_n)}, \cdot,  \uvarepsilon^0_{n+1})^{-1}  \; \big)\big]
        \\
        &=
        \EE\big[\phi(\PP^0_{\uX_{n+1}})\big],
    \end{align*}
    where we used in the first line the assumption on $\zeta_{n+1}$, in the second line the definition of $\bar F$ given by~\eqref{eq:MFMGM_Fbar_pushforward}, in the third line the induction hypothesis and finally Lemma~\ref{le:dynamics_of_conditional_distribution}. We thus proved that $\cL(\zeta_{n+1}) = \cL(\PP^0_{\uX_{n+1}})$. Now, recall that by assumption, $(\zeta_{n+1},\Xi^{\zeta_n}[\hatueta_{n+1}])$ and $(\PP^0_{\uX_{n+1}}, \PP^0_{(\uX_{n+1}, \ualpha_{n+1})})$ share the same regular version $\kappa_{n+1}$ of the conditional probability. We conclude that~\eqref{eq:identify_conditional_joint_distribution} holds for $n+1$ instead of $n$.
\end{proof}

\section{More Details on the Example}

\subsection{Verification of the assumptions}
\label{app:ex-verif-assumptions}
First, Assumption~\ref{assumption:compactness} is respected by definition. Except for $\nu^0$-almost everywhere continuity of $F^i(\cdot, \cdot, \cdot, (u^0, Z^0))$, \ref{assumption:continuity} is clearly respected. The $\nu^0$-almost everywhere continuity of $F^i(\cdot, \cdot, \cdot, (u^0, Z^0))$ comes from the second point of Lemma~\ref{le:BlackwellDubins}.

Next, we check that Assumption~\ref{assumption:abs_continuity} is satisfied. We write $\bar S = \cP(\fX^m)$ and for each $i \in [m]$ and $\hat A^i = \cP(\fA^i)$. We recall that the various spaces associated with the MFTG are introduced in Definition~\ref{def:MFMG}. For any $(\mu,\hatua) \in \Sigma$ and $(u^0,Z^0)\in E^0$, the lifted transition function is given by
$$
    \bar F(\mu,\hatua, (u^0, Z^0)) = \delta_{\rho_{\fX^m}([Z^0 \pra(\Xi^{\mu}[\hatua])],u^0)}, \quad \forall (\mu, \hatua,  (u^0,Z^0)) \in \bar S \times \hatuA \times E^0.
$$
This defines a transition kernel $P(\mu, \hatua) \in \cP(\bar S)$ such that:
$$
    P(\mu, \hatua)(\{ \delta_{\ua} : \ua \in \fX^m \}) = 1.
$$
More explicitly, for any $\ua \in \fA^m$,
\begin{align}
    P(\mu, \hatua)(\{ \delta_{\ua} \})
    &= \PP\left( \rho_{\fX^m}([Z^0 \pra(\Xi^{\mu}[\hatua])],u^0) = \ua\right) 
    \notag
    \\
    &= \EE \big( \EE\big( \mathds{1}_{\rho_{\fX^m}([Z^0 \pra(\Xi^{\mu}[\hatua])],u^0) = \ua} \,\mid\, Z^0 \big) \big) 
    \notag
    \\
    &= \EE\big( [Z^0 \pra(\Xi^{\mu}[\hatua])](\{\ua\}) \big) 
    \notag
    \\
    &= \pra(\Xi^{\mu}[\hatua]))(\{\ua\})
    \notag
    \\
    &= \prod_{i = 1}^m \prai(\hat a^i)(\{a^i\}),
    \label{eq:example1-Pmua}
\end{align}
where, in the third line, we used the Blackwell-Dubins lemma (Lemma~\ref{le:BlackwellDubins}) implying that given $Z^0$, the random variable $\rho_{\fX^m}([Z^0 \pra(\Xi^{\mu}[\hatua])],U^0)$ follows the law $[Z^0 \pra(\Xi^{\mu}[\hatua])]$ when $U^0\sim U([0,1])$. Written differently, $\cL(\rho_{\fX^m}([Z^0 \pra(\Xi^{\mu}[\hatua])],U^0) \mid Z^0) = [Z^0 \pra(\Xi^{\mu}[\hatua])]$. In the fourth line, we used the fact that the perturbed law $[Z^0 \pra(\Xi^{\mu}[\hatua])]$ is a random variable with mean $\pra(\Xi^{\mu}[\hatua])$ since $\cL([Z^0 \pra(\Xi^{\mu}[\hatua])]) = \text{Dirichlet}(\pra(\Xi^{\mu}[\hatua]))$. 

We illustrate the actions of the perturbations on a given probability law, and the effect of $P$ in Appendix~\ref{app:mfdi_illustrations}.

Let $\lambda \in \cP(\bar S)$ denote the uniform distribution over the set $\Lambda = \{\delta_{\ux} : \ux \in \fX^m\} \subset \bar S$. Notice that $\Lambda$ is finite and therefore measurable. Then for all $(\mu, \hatua) \in \Sigma $, we have $P(\mu, \hatua) \ll \lambda$. Using~\eqref{eq:example1-Pmua} and the fact that $\lambda$ is uniform over $\Lambda$, which has cardinality $K$, the Radon-Nikodym density is given for each $ (\mu', \mu, \hatua) \in \bar S \times \Sigma$ by:
$$
    q(\mu',\mu, \hatua) = \pra(\Xi^{\mu}(\hatua))(\text{supp}(\mu')) K \cdot \mathds{1}_{\{\mu' \in \Lambda\}},
$$
where, for any $\mu \in \cP(\fX^m)$, supp$(\mu)$ denotes the support of $\mu$ (which is a measurable set).

Let $(\hatua_n)_{n \in \NN} \in \hatuA^\NN$ such that $\hatua_n \xrightarrow[n \to \infty]{} \hatua$ in the weak topology. Then we have:
\begin{align*}
    \lim_{n\to \infty} \int_{\bar S} &\left| q(\mu',\mu, \bar a_n) - q(\mu',\mu, \bar a) \right| \lambda(d\mu') 
    \\
    &= \lim_{n\to \infty} \sum_{\ux \in \fX^m} \left| q(\delta_{\ux},\mu, \bar a_n) - q(\delta_{\ux},\mu, \bar a) \right| \cdot \lambda(\{\delta_{\ux}\}) \\
    &= \lim_{n\to \infty} \sum_{\ux \in \fX^m} \left| \prod_{i = 1}^m \prai(\hat a_n^i)(\{x^i\}) - \prod_{i = 1}^m \prai(\hat a^i)(\{x^i\}) \right| \\
    &= 0.
\end{align*}
Lastly, $q$ is bounded by $K$ and $q(\mu',\mu,\cdot)$ is continuous. This shows that Assumption~\ref{assumption:abs_continuity} is respected. As a consequence, we can apply Theorem~\ref{thm:existence_stationary_LNE} and get existence.

This first example does not consider the all framework of MFTG that we included. Among all the different type of noises that our model can handle, it only takes the global common noise. We present next a slight modification of the mean field drift of intentions with symmetric idiosyncratic noises.

\subsection{Mean Field Drift of Intentions with Symmetric Idiosyncratic Noises}

In the previous model, all (representative) agents were unperturbed by idiosyncratic noise. We now present a way to incorporate individual-level perturbations. Let $m, \fX, \fA, E^0, G, K, \nu^0$ and for each $i \in [m]$, let $f^i$ be as in Section~\ref{section:mean_field_drift_of_intentions}. Let:
\begin{itemize}
    \item $E = \{-1,0,1\}$ be the idiosyncratic space for each team.
    \item For each $i \in [m]$, $\nu^i \in \cP(E)$ such that $\EE_{\varepsilon^i \sim \nu^i}(\varepsilon^i) = 0$, be the law of the idiosyncratic noise over $E$;
    \item For each $i \in [m]$, define the system function $F^i:\cP(\fX^m \times \fA^m) \times E \times  E^0$ by: for all $(x,a,\bar a,e^i,(u^0,Z^0)) \in (\fX,\fA,\cP(\fX^m\times \fA^m),E, E^0)$,
    $$
        F^i(x,a,\bar a,e^i, (u^0, Z^0)) = \rho_{\fX^m}([Z^0 \pra(\bar a)],u^0)^i + e^i.
    $$
    We consider $\fX$ as periodic (the state $G$ is identified with $0$).    The dynamics are as defined in Section~\ref{section:mean_field_drift_of_intentions}, with the additional assumption that each agent $i \in [m]$ is perturbed by an individual noise term $e^i$.
\end{itemize}

As in the previous case, one can verify that Assumptions~\ref{assumption:compactness} and~\ref{assumption:continuity} are satisfied. It remains to verify Assumption~\ref{assumption:abs_continuity}. 

For any $(\mu,\hatua) \in \Sigma $ and $(u^0,Z^0)\in E^0$, the lifted transition function is given by:
$$
    \bar F(\mu,\hatua, (u^0, Z^0)) = \sum_{\underline{e} \in \uE} \, \unu(\{\underline{e}\}) \,  \delta_{\rho_{\fX^m}([Z^0 \pra(\Xi^{\mu}[\hatua])],u^0) + \underline{e}}, \quad \forall \bar a\in \bar A, \, (u^0,Z^0) \in E^0.
$$
This defines a transition kernel $P(\mu, \hatua) \in \cP(\bar S)$ such that:
$$
    P(\mu, \hatua)\Big(\Big\{ \sum_{\underline{e} \in \uE} \, \unu(\{\underline{e}\}) \,  \delta_{\ua + \underline{e}} : \ua \in \fX^m \Big\}\Big) = 1.
$$
More explicitly, for any $\ua \in \fX^m$,
\begin{align*}
    P(\mu, \hatua)\Big(\Big\{ \sum_{\underline{u} \in \uE} \, \unu(\{\underline{e}\}) \,  \delta_{\ua + \underline{e}} \Big\}\Big)
    &= \pra(\Xi^{\mu}[\hatua]))(\{\ua\})\\
    &= \prod_{i = 1}^m \prai(\hat a^i)(\{a^i\}).
\end{align*}
This is obtained exactly as in the first example.
Let $\lambda' \in \cP(\bar S)$ denote the uniform distribution over the set 
$$
    \Lambda' = \Big\{\sum_{\underline{e} \in \uE} \, \unu(\{\underline{e}\}) \,  \delta_{\ua + \underline{e}} : \ua \in \fA^m\Big\} \subset \cP(\fX^m).
$$
Notice that $\Lambda'$ is finite and therefore measurable. Then for all $(\mu, \hatua) \in \Sigma $, we have $P(\mu, \hatua) \ll \lambda'$, and the Radon-Nikodym density is given by:
$$
    q(\mu',\mu, \hatua) = \pra(\Xi^{\mu}(\hatua))\Big(\sum_{\ux \in \fX^m} \, \ux \, \mu'(\{\ux\}))\Big) K \cdot  \mathds{1}_{\{\mu' \in \Lambda\}}, \quad \forall \mu' \in \cP(\fX^m), \, \forall (\mu, \hatua) \in \Sigma .
$$
Let $(\hatua_n)_{n \in \NN} \in \hatuA^\NN$ such that $\hatua_n \xrightarrow[n \to \infty]{} \hatua$ in the weak topology. We also find
\begin{align*}
    \lim_{n\to \infty} &\int_{\bar S} \left| q(\mu',\mu, \bar a_n) - q(\mu',\mu, \bar a) \right| \lambda'(d\mu)
    \\
    &= \lim_{n\to \infty} \sum_{\ux \in \fX^m} \left| \prod_{i = 1}^m \prai(\hat a_n^i)(\{x^i\}) - \prod_{i = 1}^m \prai(\hat a^i)(\{x^i\}) \right| \\
    &= 0.
\end{align*}
Lastly, $q$ is bounded and $q(\mu',\mu,\cdot)$ is continuous. This shows that Assumption~\ref{assumption:abs_continuity} is respected. Hence Theorem~\ref{thm:existence_stationary_LNE} holds.

\subsection{Mean Field Drift of Intentions, Illustrations}
\label{app:mfdi_illustrations}

In this appendix, we discuss an illustration of the dynamics in the Mean Field Drift of Intentions model (see Section~\ref{section:mean_field_drift_of_intentions}). Let us consider the following parameters:
\begin{itemize}
\item $m = 2$;
\item $G = 3$;
\item $\fX = \fA = \llbracket 0, 2 \rrbracket$.
\end{itemize}
Recall that for all $(\ux,\ua, \bar a, u^0, Z^0) \in \llbracket 0, 2 \rrbracket^2 \times \llbracket 0, 2 \rrbracket^2 \times \cP(\llbracket 0, 2 \rrbracket^2 \times \llbracket 0, 2 \rrbracket^2) \times ([0,1] \times \RR_+^9)$, 
$$
    \uF(\ux,\ua, \bar a, (u^0,Z^0)) = \rho_{\llbracket 0, 2 \rrbracket^2}([Z^0 \pra(\bar a)], u^0).
$$
For the sake of illustration, we represent a probability measure on $\llbracket 0, 2 \rrbracket^2$ as a vector. Specifically, given $\mu \in \cP(\llbracket 0, 2 \rrbracket^2)$, we write $\mu = (\mu_1, \dots, \mu_9) = (\mu(\{(0,0)\}), \mu(\{(0,1)\}), \mu(\{(0,2)\}), \mu(\{(1,0)\}), \dots, \mu(\{(2,2)\}))$.

In Figure~\ref{fig:spiked}, we illustrate three different perturbations $[Z^0 \pra(\bar a)]$ of $\pra(\bar a) = \left(\frac{9}{10}, \frac{1}{20}, \frac{1}{20}, 0, 0, 0, 0, 0, 0\right)$ (second column). This probability measure is highly concentrated, with most of its mass centered at ${(0,0)}$. As anticipated in Section~\ref{section:mean_field_drift_of_intentions}, when $Z^0$ is a vector of i.i.d.\ random variables distributed according to an exponential law with parameter $1$, the perturbations generally have a limited impact on the original distribution (first and second rows), although for some realizations of $Z^0$, the perturbed distribution can differ significantly (third row). 
The third column shows some probability measures sampled according to $P(\bar a)$, and as expected these are Dirac measures supported on the same set as $\pra(\bar a)$.

\begin{figure}[ht]
    \centering

    \begin{subfigure}[t]{0.32\textwidth}
        \centering
        \textbf{$\pra(\bar a)$} \\[0.3em]
        \includegraphics[width=\textwidth]{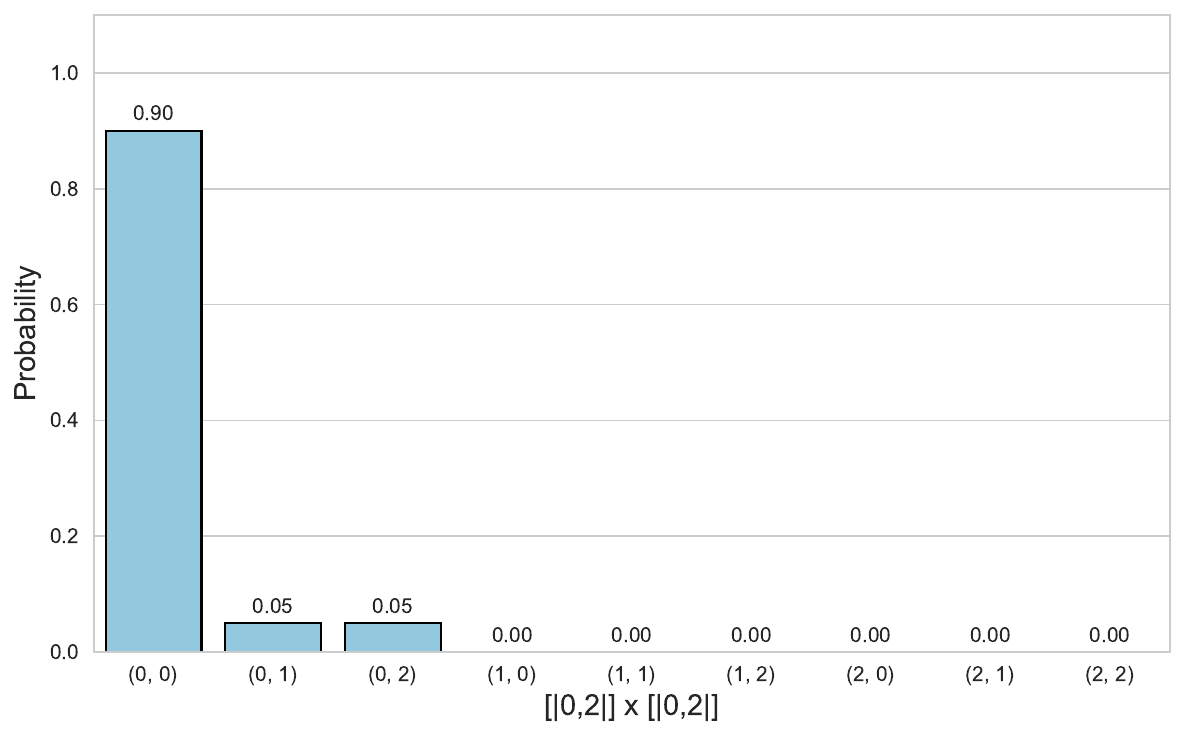}
    \end{subfigure}
    \hfill
    \begin{subfigure}[t]{0.32\textwidth}
        \centering
        \textbf{$[Z^0\pra(\bar a)]$} \\[0.3em]
        \includegraphics[width=\textwidth]{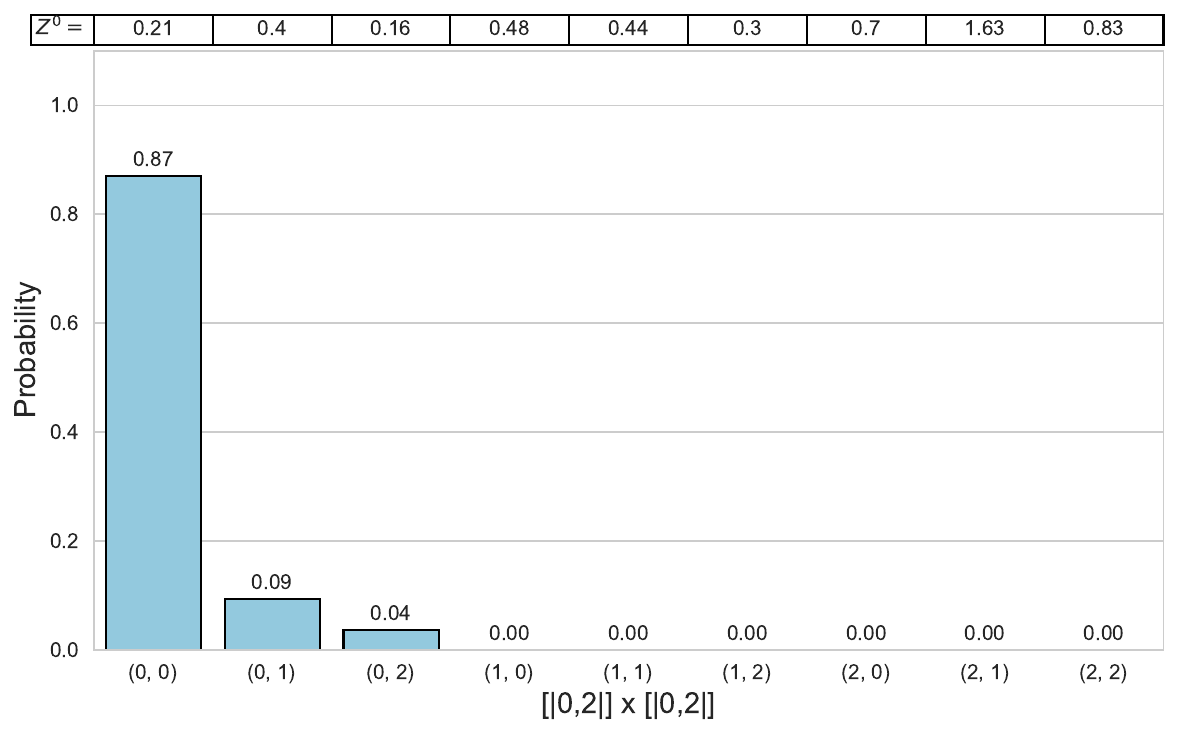}
    \end{subfigure}
    \hfill
    \begin{subfigure}[t]{0.32\textwidth}
        \centering
        \textbf{$\mu \sim P(\bar a)$} \\[0.3em]
        \includegraphics[width=\textwidth]{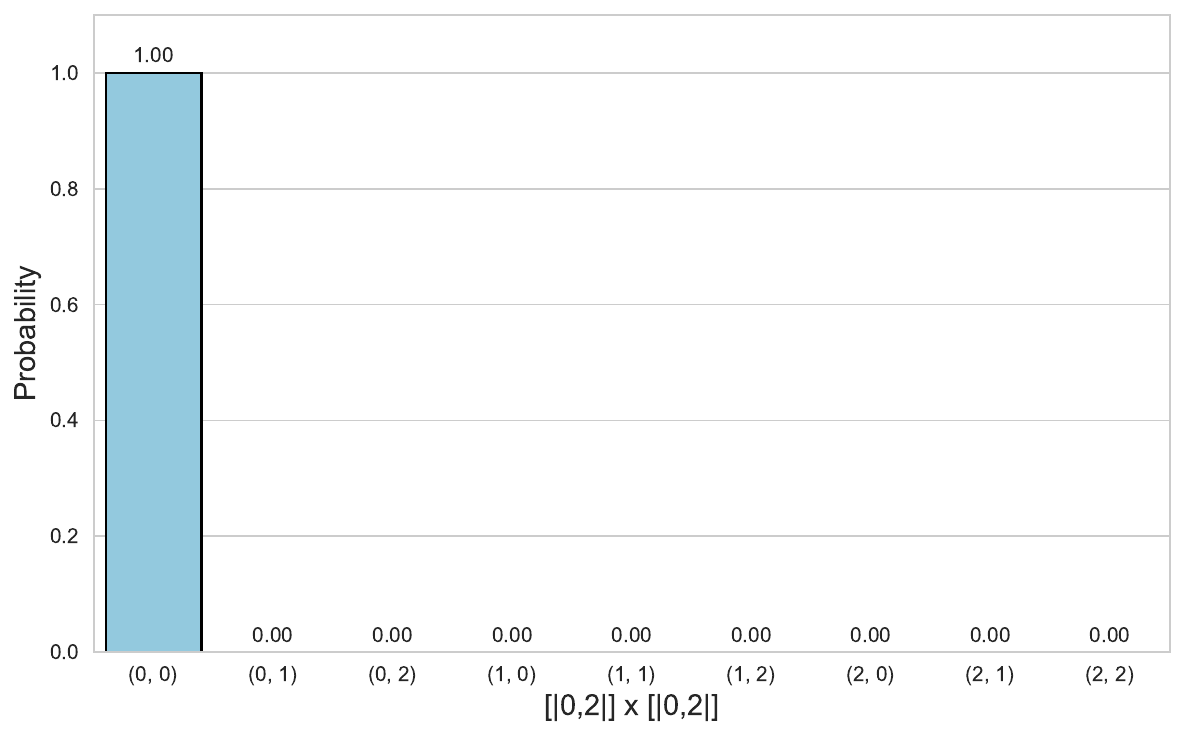}
    \end{subfigure}

    \vspace{1em}

    \begin{subfigure}[t]{0.32\textwidth}
        \centering
        \textbf{} \\[0.3em]
        \includegraphics[width=\textwidth]{figs_article/bar_a_spiked.pdf}
    \end{subfigure}
    \hfill
    \begin{subfigure}[t]{0.32\textwidth}
        \centering
        \textbf{} \\[0.3em]
        \includegraphics[width=\textwidth]{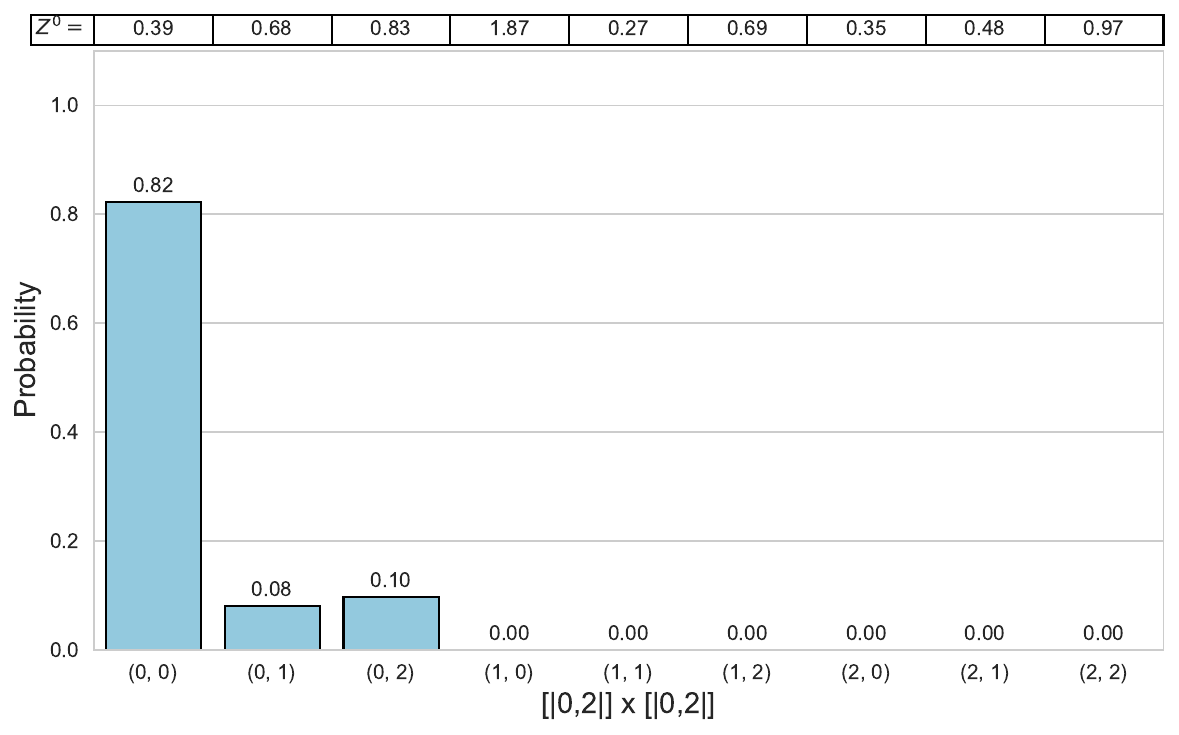}
    \end{subfigure}
    \hfill
    \begin{subfigure}[t]{0.32\textwidth}
        \centering
        \textbf{} \\[0.3em]
        \includegraphics[width=\textwidth]{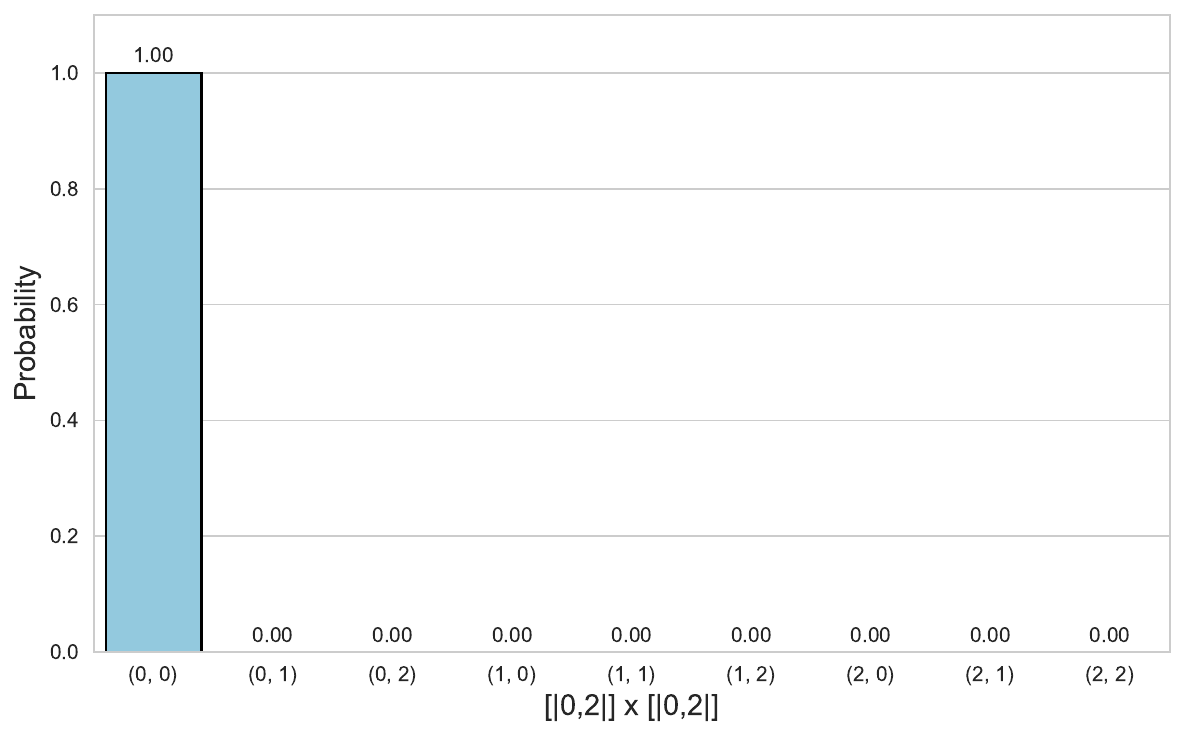}
    \end{subfigure}

    \vspace{1em}

    \begin{subfigure}[t]{0.32\textwidth}
        \centering
        \textbf{} \\[0.3em]
        \includegraphics[width=\textwidth]{figs_article/bar_a_spiked.pdf}
    \end{subfigure}
    \hfill
    \begin{subfigure}[t]{0.32\textwidth}
        \centering
        \textbf{} \\[0.3em]
        \includegraphics[width=\textwidth]{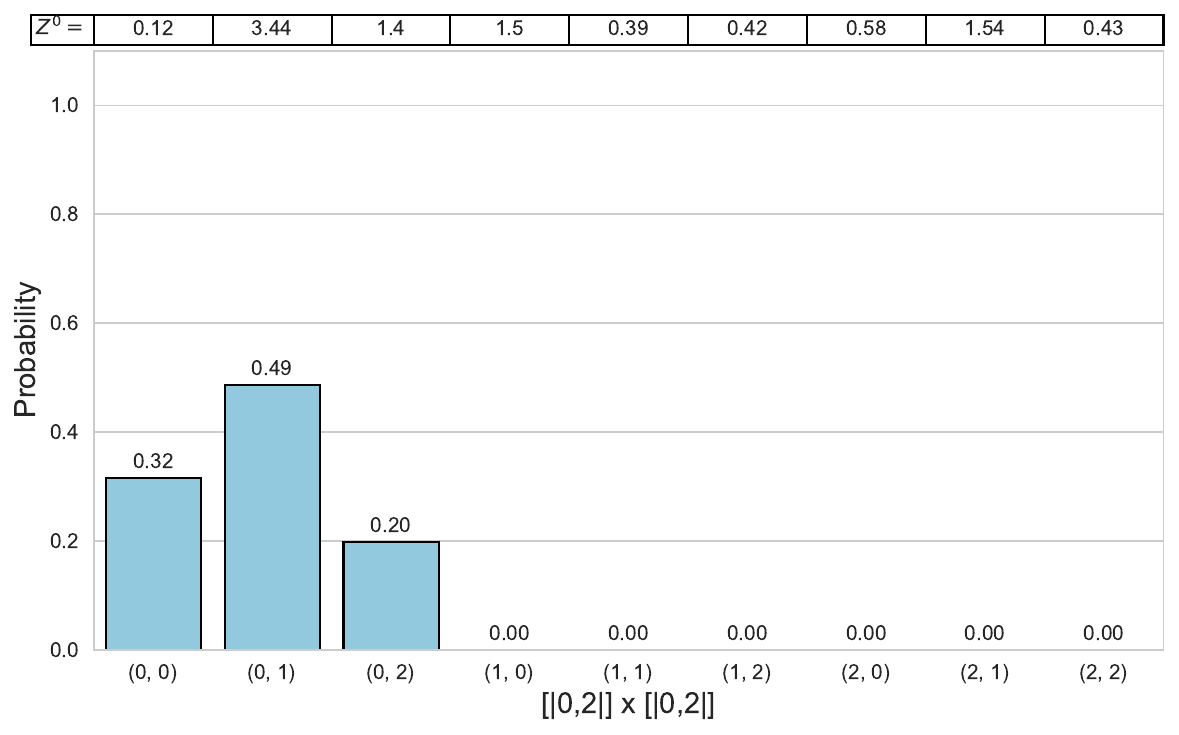}
    \end{subfigure}
    \hfill
    \begin{subfigure}[t]{0.32\textwidth}
        \centering
        \textbf{} \\[0.3em]
        \includegraphics[width=\textwidth]{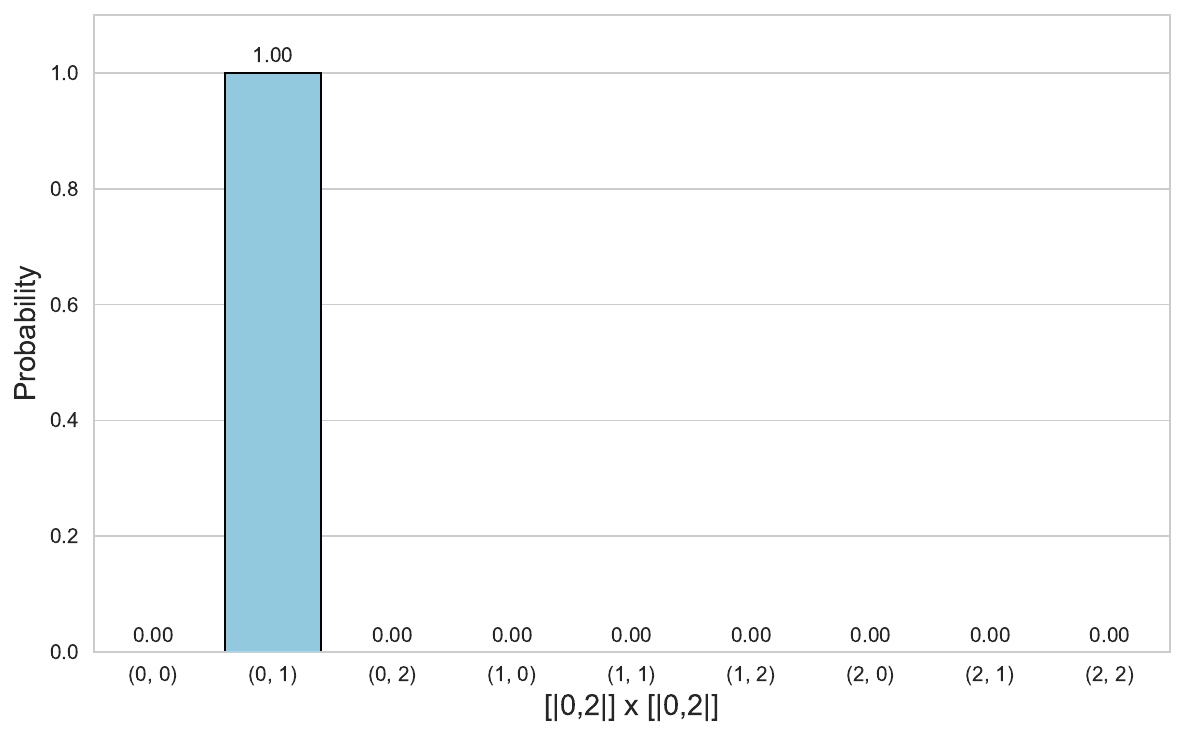}
    \end{subfigure}

    \caption{Mean Field Drift of Intentions dynamics for a spiked joint law of actions. The first column shows three repetitions of the same probability measure $\pra(\bar a) = \left(\frac{9}{10}, \frac{1}{20}, \frac{1}{20}, 0, 0, 0, 0, 0, 0\right) \in \cP(\llbracket 0, 2\rrbracket^2)$. The second column presents three different perturbations of this measure by a random vector $Z^0 \in \RR_+^9$, with independent exponentially distributed marginals of parameter 1. The third column displays joint laws of states obtained from $P(\bar a)$.}
    \label{fig:spiked}
\end{figure}

In Figure~\ref{fig:uniform}, we illustrate three different perturbations $[Z^0 \pra(\bar a)]$ of $\pra(\bar a) = \left(\frac{1}{9}, \frac{1}{9}, \frac{1}{9}, \frac{1}{9}, \frac{1}{9}, \frac{1}{9}, \frac{1}{9}, \frac{1}{9}, \frac{1}{9}\right)\in \cP(\llbracket 0, 2\rrbracket^2)$ (second column). This probability measure is uniform over $\llbracket 0,2 \rrbracket^2$. As anticipated in Section~\ref{section:mean_field_drift_of_intentions}, when $Z^0$ is a vector of i.i.d.\ random variables distributed according to an exponential law with parameter $1$, the perturbation significantly affects the original probability measure. 
The third column shows the probability measures generated from $P(\bar a)$, and as expected, they correspond to Dirac measures supported on the support of $\pra(\bar a)$.

\begin{figure}[ht]
    \centering

    \begin{subfigure}[t]{0.32\textwidth}
        \centering
        \textbf{$\pra(\bar a)$} \\[0.3em]
        \includegraphics[width=\textwidth]{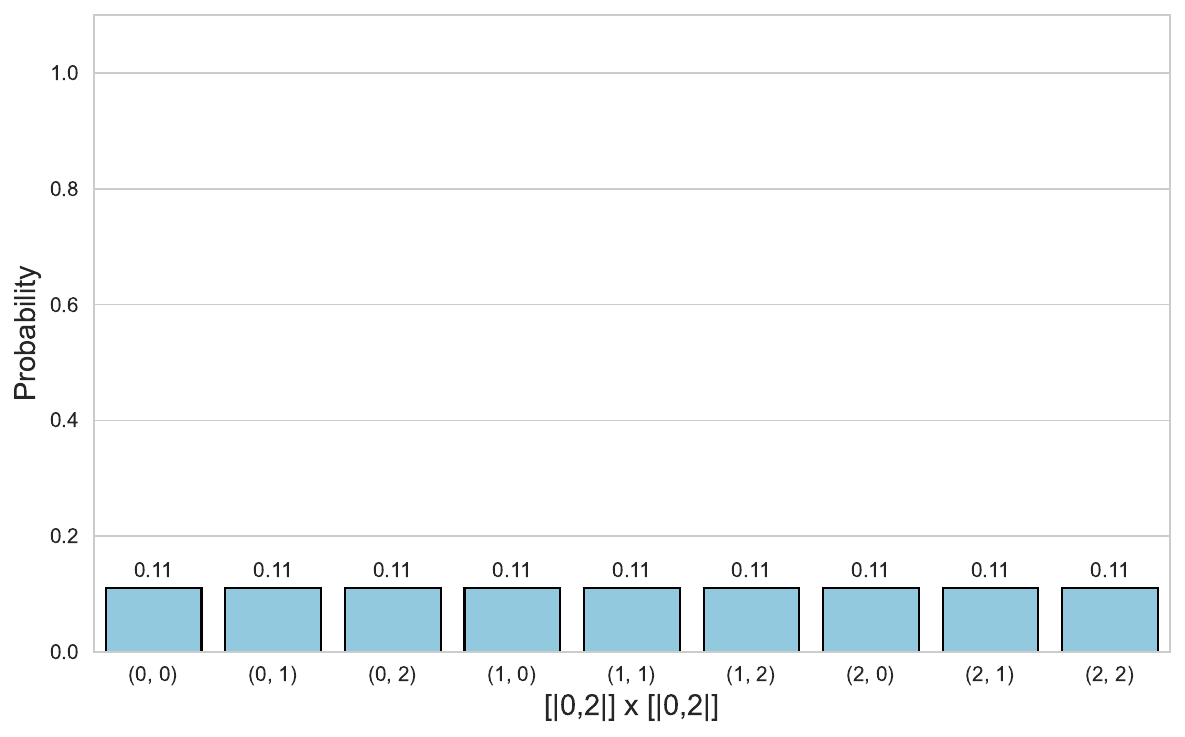}
    \end{subfigure}
    \hfill
    \begin{subfigure}[t]{0.32\textwidth}
        \centering
        \textbf{$[Z^0\pra(\bar a)]$} \\[0.3em]
        \includegraphics[width=\textwidth]{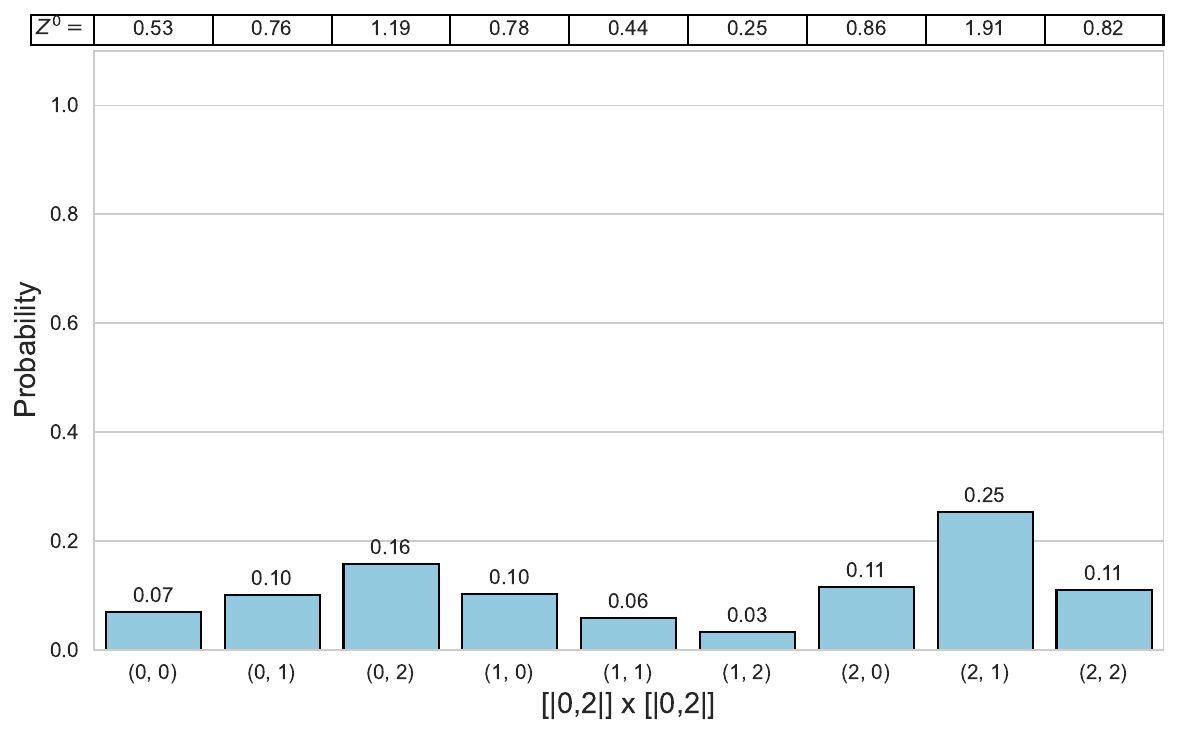}
    \end{subfigure}
    \hfill
    \begin{subfigure}[t]{0.32\textwidth}
        \centering
        \textbf{$\mu \sim P(\bar a)$} \\[0.3em]
        \includegraphics[width=\textwidth]{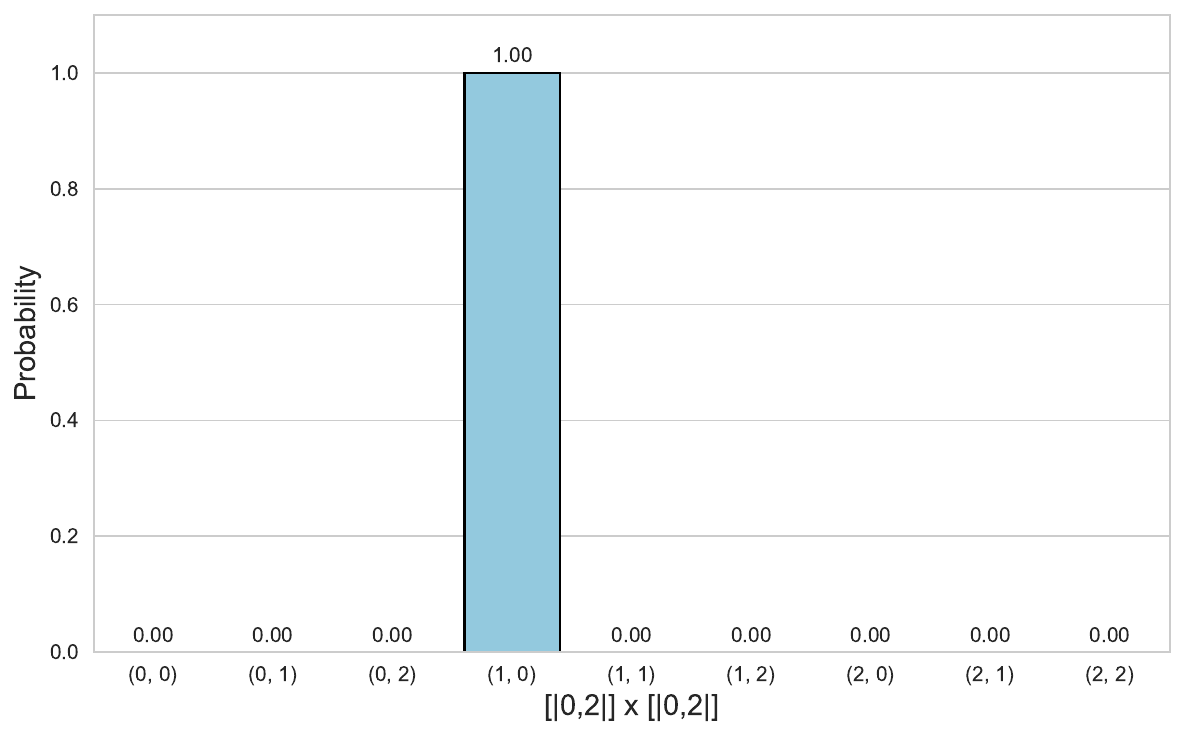}
    \end{subfigure}

    \vspace{1em}

    \begin{subfigure}[t]{0.32\textwidth}
        \centering
        \textbf{} \\[0.3em]
        \includegraphics[width=\textwidth]{figs_article/bar_a_uniform.pdf}
    \end{subfigure}
    \hfill
    \begin{subfigure}[t]{0.32\textwidth}
        \centering
        \textbf{} \\[0.3em]
        \includegraphics[width=\textwidth]{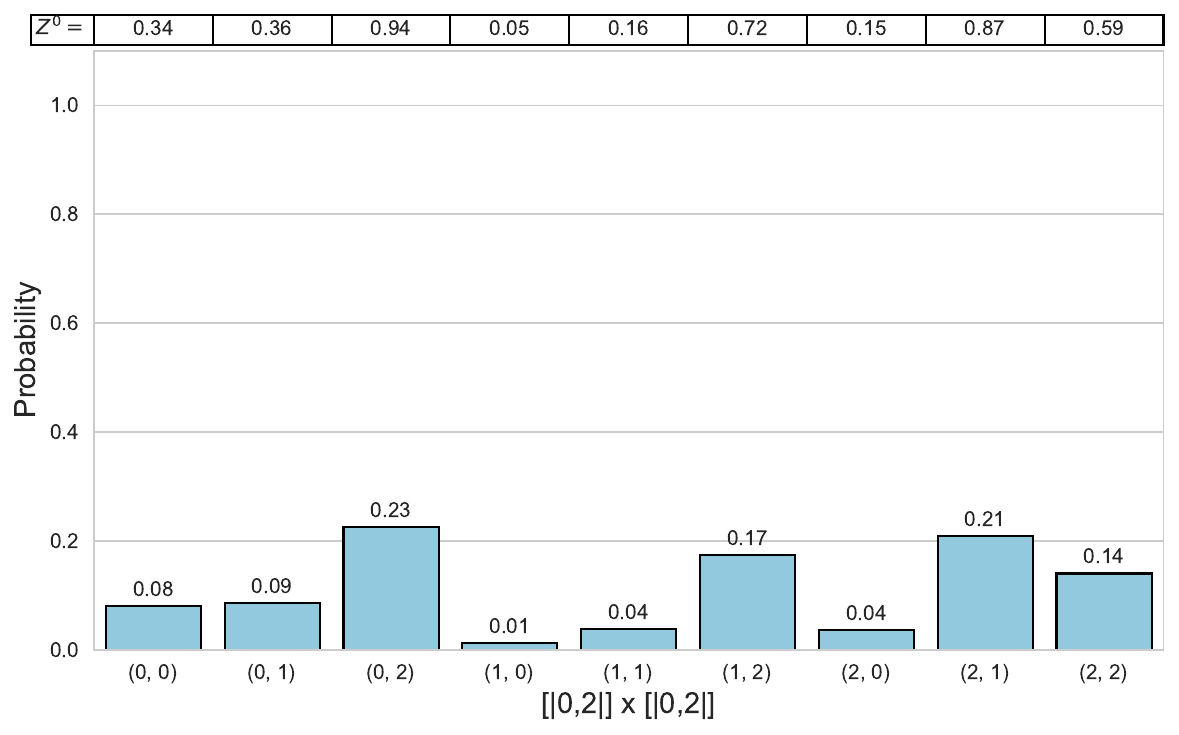}
    \end{subfigure}
    \hfill
    \begin{subfigure}[t]{0.32\textwidth}
        \centering
        \textbf{} \\[0.3em]
        \includegraphics[width=\textwidth]{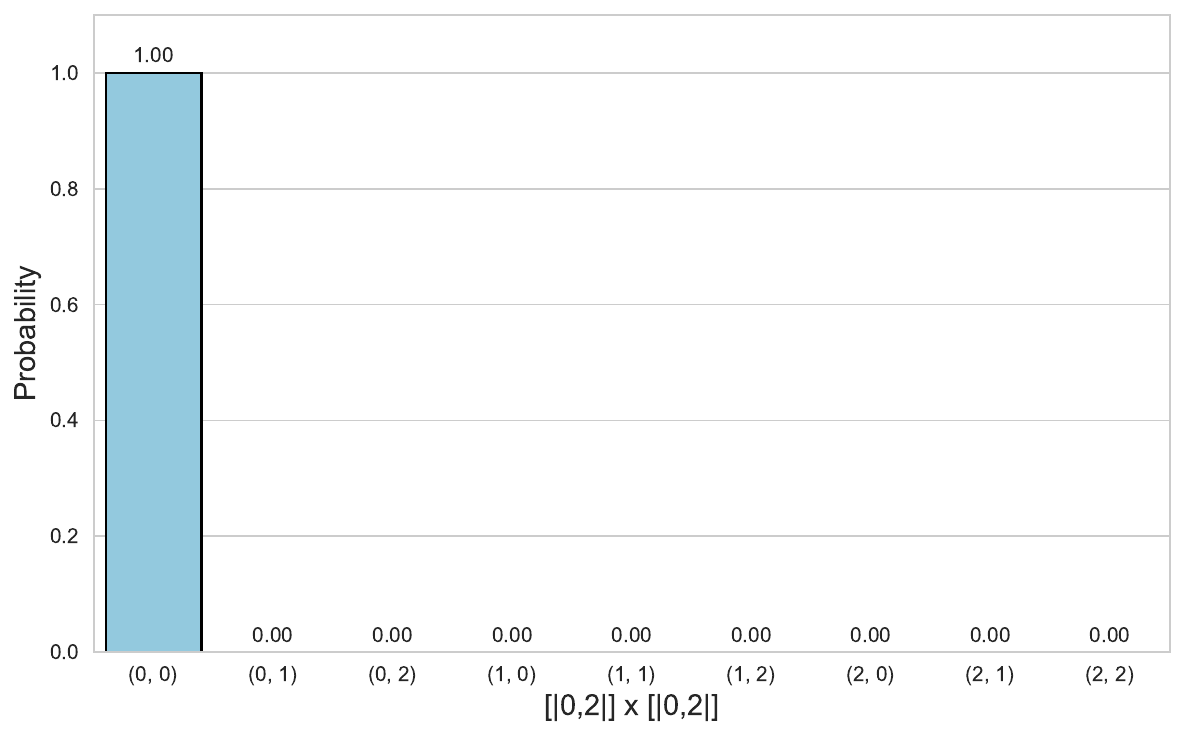}
    \end{subfigure}

    \vspace{1em}

    \begin{subfigure}[t]{0.32\textwidth}
        \centering
        \textbf{} \\[0.3em]
        \includegraphics[width=\textwidth]{figs_article/bar_a_uniform.pdf}
    \end{subfigure}
    \hfill
    \begin{subfigure}[t]{0.32\textwidth}
        \centering
        \textbf{} \\[0.3em]
        \includegraphics[width=\textwidth]{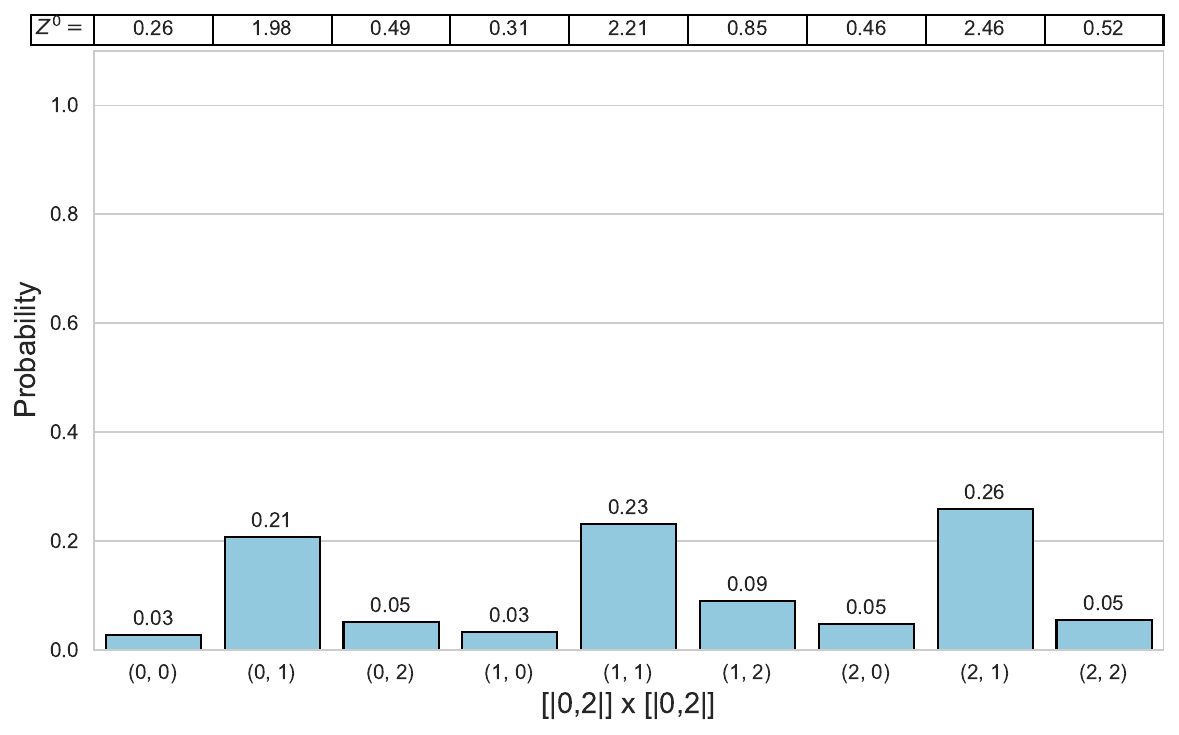}
    \end{subfigure}
    \hfill
    \begin{subfigure}[t]{0.32\textwidth}
        \centering
        \textbf{} \\[0.3em]
        \includegraphics[width=\textwidth]{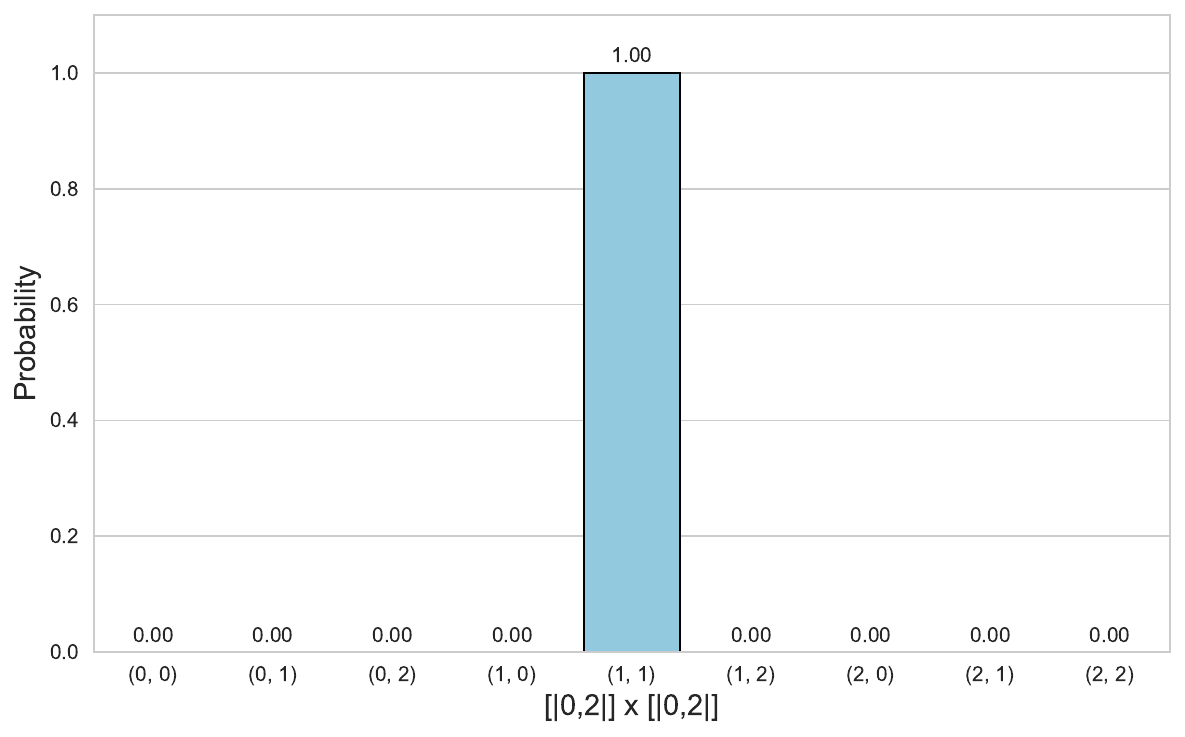}
    \end{subfigure}
    \caption{Mean Field Drift of Intentions dynamics for a uniform joint law of actions. The first column shows three repetitions of the same probability measure $\pra(\bar a) = \left(\frac{1}{9}, \frac{1}{9}, \frac{1}{9}, \frac{1}{9}, \frac{1}{9}, \frac{1}{9}, \frac{1}{9}, \frac{1}{9}, \frac{1}{9}\right) \in \cP(\llbracket 0, 2\rrbracket^2)$. The second column presents three different perturbations of this measure by a random vector $Z^0 \in \RR_+^9$, with independent exponentially distributed marginals of parameter 1. The third column displays joint laws of states obtained from $P(\bar a)$. }
    \label{fig:uniform}
\end{figure}

\bibliographystyle{apalike}
\bibliography{references}

\begin{thebibliography}{}

\bibitem[Achitouv and Chavalarias, 2025]{achitouv_2024_twitter}
Achitouv, A. and Chavalarias, D. (2025).
\newblock Dynamical evolution of social network polarization and its impact on
  the propagation of a virus.
\newblock {\em Chaos, Solitons {\&} Fractals}.

\bibitem[Aliprantis and Border, 2006]{AliprantisBorder}
Aliprantis, D. and Border, C. (2006).
\newblock {\em Infinite Dimensional Analysis}.
\newblock Springer Berlin, Heidelberg.

\bibitem[Altman, 1999]{altman_1999_book_constrained_mdp}
Altman, E. (1999).
\newblock {\em Constrained Markov Decision Processes.}
\newblock Stochastic Modeling. Chapman and Hall/CRC.

\bibitem[Altman and Shwartz, 2000]{altman_1998_constrained_markov_game}
Altman, E. and Shwartz, A. (2000).
\newblock Constrained markov games: Nash equilibria.
\newblock In {\em Advances in Dynamic Games and Applications}. Birkh{\"a}user
  Boston.

\bibitem[Barreiro-Gomez et~al., 2025]{barreiro_2025_mftg_high_order_cost}
Barreiro-Gomez, J., Duncan, T., Pasik-Duncan, B., and Tembine, H. (2025).
\newblock Semi-explicit solution of some discrete-time mean-field-type games
  with higher-order costs.
\newblock {\em arxiv}.

\bibitem[Barreiro-Gomez and Tembine, 2019]{barreiro2019blockchain}
Barreiro-Gomez, J. and Tembine, H. (2019).
\newblock Blockchain token economics: A mean-field-type game perspective.
\newblock {\em IEEE Access}, 7:64603--64613.

\bibitem[Bensoussan et~al., 2013]{MR3134900}
Bensoussan, A., Frehse, J., and Yam, S. (2013).
\newblock {\em Mean field games and mean field type control theory}.
\newblock Springer Briefs in Mathematics. Springer, New York.

\bibitem[Bertsekas and Shreve, 1978]{bertsekas_1878_book_stocha}
Bertsekas, D. and Shreve, S. (1978).
\newblock {\em Stochastic Optimal Control: The Discrete-Time Case}.
\newblock Academic Press, Inc.

\bibitem[Billingsley, 1999]{billingsley_1999_book_proba_measure}
Billingsley, P. (1999).
\newblock {\em Convergence of Probability Measures}.
\newblock Wiley Series in Probability and Statistics. John Wiley \& Sons.

\bibitem[Blackwell and Dubins, 1983]{blackwell_dubins_lemma_1983}
Blackwell, D. and Dubins, L. (1983).
\newblock An extension of {S}korohod’s almost sure representation theorem.
\newblock {\em Proceedings of the American Mathematical Society}.

\bibitem[Carmona and Delarue, 2015]{carmona_2015_fbsde_mkv}
Carmona, R. and Delarue, F. (2015).
\newblock Forward–backward stochastic differential equations and controlled
  mckean–vlasov dynamics.
\newblock {\em Annals of Probability}.

\bibitem[Carmona and Delarue, 2018a]{CarmonaDelarue_book_I}
Carmona, R. and Delarue, F. (2018a).
\newblock {\em Probabilistic theory of mean field games with applications.
  {I}}, volume~83 of {\em Probability Theory and Stochastic Modelling}.
\newblock Springer, Cham.
\newblock Mean field FBSDEs, control, and games.

\bibitem[Carmona and Delarue, 2018b]{CarmonaDelarue_book_II}
Carmona, R. and Delarue, F. (2018b).
\newblock {\em Probabilistic theory of mean field games with applications.
  {II}}, volume~84 of {\em Probability Theory and Stochastic Modelling}.
\newblock Springer, Cham.
\newblock Mean field games with common noise and master equations.

\bibitem[Carmona et~al., 2020]{carmona2020policy}
Carmona, R., Hamidouche, K., Lauri{\`e}re, M., and Tan, Z. (2020).
\newblock Policy optimization for linear-quadratic zero-sum mean-field type
  games.
\newblock In {\em 2020 59th IEEE Conference on Decision and Control (CDC)},
  pages 1038--1043. IEEE.

\bibitem[Carmona et~al., 2023]{CarmonaLauriereTan-2019-mfqrl}
Carmona, R., Lauri{\`e}re, M., and Tan, Z. (2023).
\newblock Model-free mean-field reinforcement learning: Mean-field {MDP} and
  mean-field {Q}-learning.
\newblock {\em Annals of Applied Probability}.

\bibitem[Castells, 1996]{castells_1996_network}
Castells, M. (1996).
\newblock {\em The Rise of the Network Society}, volume~1 of {\em The
  Information Age: Economy, Society and Culture}.
\newblock Blackwell Publishers.

\bibitem[Cosso and Pham, 2019]{cosso_2019_zero-sum}
Cosso, A. and Pham, H. (2019).
\newblock Zero-sum stochastic differential games of generalized
  {M}c{K}ean–{V}lasov type.
\newblock {\em Journal de Mathématiques Pures et Appliquées}.

\bibitem[Djehiche et~al., 2020]{djehiche2020price}
Djehiche, B., Barreiro-Gomez, J., and Tembine, H. (2020).
\newblock Price dynamics for electricity in smart grid via mean-field-type
  games.
\newblock {\em Dynamic Games and Applications}, 10(4):798--818.

\bibitem[Djehiche et~al., 2017a]{djehiche2017mean}
Djehiche, B., Tcheukam, A., and Tembine, H. (2017a).
\newblock A mean-field game of evacuation in multilevel building.
\newblock {\em IEEE Transactions on Automatic Control}, 62(10):5154--5169.

\bibitem[Djehiche et~al., 2017b]{djehiche_2017_mftg}
Djehiche, B., Tcheukam, A., and Tembine, H. (2017b).
\newblock Mean-field-type games in engineering.
\newblock {\em AIMS Electronics and Electrical Engineering}.

\bibitem[Dufour and Prieto-Rumeau, 2024]{dufour_2024_ne_markov_game}
Dufour, F. and Prieto-Rumeau, T. (2024).
\newblock Nash equilibria for total expected reward absorbing markov games: the
  constrained and unconstrained cases.
\newblock {\em Appl Math Optim}, 89.

\bibitem[Duncan et~al., 2025]{ducan_2025_mixture_noises}
Duncan, T., Pasik-Duncan, B., and Tembine, H. (2025).
\newblock Mean-field-type games driven by a mixture of noises.
\newblock {\em Int. J. Dynam. Control 13, 225}.

\bibitem[Graber, 2016]{graber_mftc_mfg_2016}
Graber, P. (2016).
\newblock Linear quadratic mean field type control and mean field games with
  common noise, with application to production of an exhaustible resource.
\newblock {\em Appl Math Optim}.

\bibitem[Guan et~al., 2024]{guan_2024_zero-sum}
Guan, Y., Afshari, M., and Tsiotras, P. (2024).
\newblock Zero-sum games between mean-field teams: Reachability-based analysis
  under mean-field sharing.
\newblock {\em Proceedings of the AAAI Conference on Artificial Intelligence}.

\bibitem[Huang et~al., 2006]{huangmalhamecaines_2006_mkv}
Huang, M., Malham{\'e}, R., and Caines, P. (2006).
\newblock Large population stochastic dynamic games: closed-loop
  {M}c{K}ean-{V}lasov systems and the {N}ash certainty equivalence principle.
\newblock {\em Commun. Inf. Syst.}, 6(3):221--251.

\bibitem[Kallenberg, 2002]{kallenberg2002foundations}
Kallenberg, O. (2002).
\newblock {\em Foundations of Modern Probability}.
\newblock Springer Science \& Business Media.

\bibitem[Kallenberg, 2017]{Kallenberg_RM}
Kallenberg, O. (2017).
\newblock {\em Random Measures, Theory and Applications}.
\newblock Springer Cham.

\bibitem[Lasry and Lions, 2007]{lasry_2007_mfg}
Lasry, J. and Lions, P. (2007).
\newblock Mean field games.
\newblock {\em Japanese Journal of Mathematics}, 2:229--260.

\bibitem[Maluck and Donner, 2015]{maluck_2015_network_network}
Maluck, J. and Donner, R. (2015).
\newblock Networks {P}erspective on {G}lobal {T}rade.
\newblock {\em PLoS}.

\bibitem[Nash, 1950]{nash_1950_equilibrium}
Nash, J.~F. (1950).
\newblock Equilibrium points in n-person games.
\newblock {\em Proceedings of the National Academy of Sciences}, 36(1):48--49.

\bibitem[Rodrigue, 2017]{Rodrigue_2017_maritime}
Rodrigue, J. (2017).
\newblock Maritime transport.
\newblock {\em International Encyclopedia of Geography}.

\bibitem[Saldi et~al., 2024]{saldi_2024_epsilon_ne}
Saldi, N., Arslan, G., and Yüksel, S. (2024).
\newblock Existence of $\epsilon$-{N}ash equilibria in nonzero-sum {B}orel
  stochastic games and equilibria of quantized {M}odels.
\newblock {\em arxiv}.

\bibitem[Sanjari et~al., 2024a]{sanjari_2024_exchangeable}
Sanjari, S., Saldi, N., and Y{\"u}ksel, S. (2024a).
\newblock Nash equilibria for exchangeable team-against-team games, their
  mean-field limit, and the role of common randomness.
\newblock {\em SIAM Journal on Control and Optimization}, 62(3):1437--1464.

\bibitem[Sanjari et~al., 2024b]{sanjari_2024_mftg}
Sanjari, S., Saldi, N., and Yüksel, S. (2024b).
\newblock Nash equilibria for exchangeable team-against-team games, their
  mean-field limit, and the role of common randomness.
\newblock {\em SIAM Journal on Control and Optimization}.

\bibitem[Shao et~al., 2025]{shao_2024_mftg}
Shao, K., Shen, J., and Laurière, M. (2025).
\newblock Reinforcement learning for finite space mean-field type games.
\newblock {\em 2025 Reinforcement Learning Conference (RLC)}.

\bibitem[Subramanian et~al., 2023]{subramanian_2023_mftg}
Subramanian, J., Kumar, A., and Mahajan, A. (2023).
\newblock Mean-field games among teams.
\newblock {\em arxiv}.

\bibitem[Tembine, 2017]{tembine_2017_mftg}
Tembine, H. (2017).
\newblock Mean-field-type games.
\newblock {\em AIMS Math}.

\bibitem[Tembine et~al., 2025]{tembine_2025_rosenblatt}
Tembine, H., Duncan, T., and Pasik-Duncan, B. (2025).
\newblock Mean-field-type game theory with {R}osenblatt noise.
\newblock {\em arxiv}.

\bibitem[Xu et~al., 2017]{xu_2017_global_flight}
Xu, K., Long, T., Qiao, S., Zheng, Y., and Han, N. (2017).
\newblock A global flight networks analysis approach using markov clustering
  and pagerank.
\newblock In {\em 2017 IEEE International Conference on Big Knowledge (ICBK)},
  pages 96--102.

\bibitem[Zaman et~al., 2024]{zaman2024robust}
Zaman, M. A.~U., Lauriere, M., Koppel, A., and Ba{\c{s}}ar, T. (2024).
\newblock Robust cooperative multi-agent reinforcement learning: {A} mean-field
  type game perspective.
\newblock In {\em 6th Annual Learning for Dynamics \& Control Conference},
  pages 770--783. PMLR.

\bibitem[Zhang et~al., 2012]{zhang_2012_twitter}
Zhang, Y., Wu, Y., and Yang, Q. (2012).
\newblock Community discovery in {T}witter based on user interests.
\newblock {\em Journal of Computational Information Systems}.

\end{thebibliography}
\end{document}